\newtheorem{theorem}{Theorem}[section]
\newtheorem{remark}{Remark}[section]
\newtheorem{lemma}[theorem]{Lemma}
\newtheorem{proposition}[theorem]{Proposition}
\begin{document}
\title[4D Navier-Stokes equations]{Regularity criterion of the 4D Navier-Stokes equations involving two velocity field components}
\author{Kazuo Yamazaki}  
\date{}
\maketitle

\begin{abstract}
We study the Serrin-type regularity criteria for the solutions to the four-dimensional Navier-Stokes equations and magnetohydrodynamics system. We show that the sufficient condition for the solution to the four-dimensional Navier-Stokes equations to preserve its initial regularity for all time may be reduced from a bound on the four-dimensional velocity vector field to any two of its four components, from a bound on the gradient of the velocity vector field to the gradient of any two of its four components, from a gradient of the pressure scalar field to any two of its partial derivatives. Results are further generalized to the magnetohydrodynamics system. These results may be seen as a four-dimensional extension of many analogous results that exist in the three-dimensional case and also component reduction results of many classical results. 

\vspace{5mm}

\textbf{Keywords: Navier-Stokes equations, Magnetohydrodynamics system, regularity criteria}
\end{abstract}
\footnote{2000MSC : 35B65, 35Q35, 35Q86}
\footnote{Department of Mathematics, Washington State University, Pullman, WA 99164-3113}

\section{Introduction}

We study the $N$-dimensional $(N \geq 2)$ Navier-Stokes equations (NSE) and magnetohydrodynamics (MHD) system defined respectively as follows: 
\begin{subequations}
\begin{align}
&\frac{du}{dt} + (u\cdot\nabla) u + \nabla \pi = \nu \Delta u,\\
&\nabla \cdot u = 0, \hspace{3mm} u(x,0) = u_{0}(x),
\end{align}
\end{subequations}
\begin{subequations}
\begin{align}
&\frac{du}{dt} + (u\cdot\nabla) u + \nabla \pi = \nu \Delta u + (b\cdot\nabla) b,\\
&\frac{db}{dt} + (u\cdot\nabla) b = \eta \Delta b + (b\cdot\nabla)u,\\
&\nabla \cdot u = \nabla\cdot b = 0, \hspace{3mm} (u,b)(x,0) = (u_{0}, b_{0})(x),
\end{align}
\end{subequations}
where $u = (u_{1}, \hdots, u_{N}): \mathbb{R}^{N} \times \mathbb{R}^{+} \mapsto \mathbb{R}^{N}, b = (b_{1}, \hdots, b_{N}): \mathbb{R}^{N} \times \mathbb{R}^{+} \mapsto \mathbb{R}^{N}, \pi: \mathbb{R}^{N} \times \mathbb{R}^{+} \mapsto \mathbb{R}$ represent the velocity vector field, magnetic vector field and pressure scalar field respectively. We denote by the parameters $\nu, \eta \geq 0$ the viscosity and magnetic diffusivity respectively. Hereafter, we also denote $\frac{d}{dt}$ by $\partial_{t}$ and  $\frac{d}{dx_{i}}$ by $\partial_{i}, i = 1, \hdots, N$ and by $\nabla_{i,j}$ the gradient vector field with $\partial_{i}, \partial_{j}$ on the $i$-th, $j$-th component respectively and zero elsewhere and $\Delta_{i,j}$ the sum of second derivatives in the $i$-th and $j$-th directions , e.g. $\nabla_{1,2} = (\partial_{1}, \partial_{2}, 0, \hdots, 0), \Delta_{1,2} = \sum_{k=1}^{2}\partial_{kk}^{2}.$ 

The importance and difficulty of the global regularity issue of the solution to these two systems are well known. In short, this is because the systems are both energy-supercritical in any dimension bigger than two even with $\nu, \eta > 0$. Indeed, e.g. for the MHD system, taking $L^{2}$-inner products with $(u,b)$ on (2a)-(2b) respectively and integrating in time lead to 
\begin{equation}
\sup_{t \in [0,T]} (\lVert u\rVert_{L^{2}}^{2} + \lVert b\rVert_{L^{2}}^{2})(t) + \int_{0}^{T} \lVert \nabla u\rVert_{L^{2}}^{2} + \lVert \nabla b\rVert_{L^{2}}^{2} d\tau \leq \lVert u_{0} \rVert_{L^{2}}^{2} + \lVert b_{0} \rVert_{L^{2}}^{2}.
\end{equation}
On the other hand, it can be shown that if $(u,b)(x,t)$ solves the system (2a)-(2c), then so does $(u_{\lambda}, b_{\lambda})(x,t) \triangleq \lambda (u,b)(\lambda x, \lambda^{2} t)$. A direct computation shows that
\begin{equation*}
\lVert u_{\lambda}(x,t) \rVert_{L^{2}}^{2} + \lVert b_{\lambda}(x,t) \rVert_{L^{2}}^{2} = \lambda^{2-N}(\lVert u(x, \lambda^{2} t) \rVert_{L^{2}}^{2} + \lVert b(x, \lambda^{2} t)\rVert_{L^{2}}^{2}).
\end{equation*}
We call an equation with a scaling symmetry critical when the strongest norm for which an \emph{a priori } estimate is available is scaling-invariant. Thus, it is standard to classify the two-dimensional NSE and the MHD system as energy-critical while for any dimension higher, energy-supercritical; in fact, it can be considered that the supercriticality increases in dimension. 

In two-dimensional case with $\nu, \eta > 0$, the authors in [22, 26] have shown the uniqueness of the solution to the NSE and the MHD system respectively. In fact, in the two-dimensional case due to the simplicity of the form after taking curls, when the dissipative and diffusive terms are replaced by fractional Laplacians, their powers may be reduced furthermore below one; we refer interested readers to [34] for the NSE with $\nu = 0$, [6] and references found therein for the MHD system. In any dimension strictly higher than two, the problem concerning the global regularity of the strong solution and the uniqueness of the weak solution to both systems remain open and hence much effort has been devoted to provide criterion so that they hold. We now review some of them, emphasizing on those of most relevance to the current manuscript. 

Initiated by the author in [27], it has been established that if a weak solution $u$ of the NSE with $\nu > 0$ satisfies
\begin{equation}
u \in L^{r}(0, T; L^{p}(\mathbb{R}^{N})), \hspace{3mm} \frac{N}{p} + \frac{2}{r} \leq 1, \hspace{3mm} p \in (N, \infty],
\end{equation}
then $u$ is smooth (see [9, 11] for the endpoint case). In [2], the author showed that if $u$ solves the NSE (1a)-(1b) with $\nu > 0$ and 
\begin{equation}
\nabla u \in L^{r}(0, T; L^{p}(\mathbb{R}^{N})), \hspace{2mm} N \geq 3, \hspace{2mm} \frac{N}{p} + \frac{2}{r} = 2, \hspace{2mm} 1 < r \leq \min\{2, \frac{N}{N-2}\},
\end{equation}
then $u$ is a regular solution. For the MHD system, the authors in [15, 37] independently showed that the sufficient condition for the regularity of the solution pair $(u,b)$ to the MHD system (2a)-(2c) may be reduced to just $u$. For many more important results in this direction of research, all of which we cannot list here, we refer to the prominent work of [1, 14] and references found therein. We do mention that the author in [38] showed that only in case $N = 3, 4$, $u$, the solution to the NSE (1a)-(1b) with $\nu > 0$, is regular and unique if  
\begin{equation}
\nabla \pi \in L^{r}(0,T; L^{p}(\mathbb{R}^{N})), \hspace{3mm} \frac{N}{p} + \frac{2}{r} \leq 3, \hspace{3mm} \frac{N}{3} \leq p \leq \infty. 
\end{equation}
We emphasize that the norm $\lVert \cdot \rVert_{L_{T}^{r}L_{x}^{p}}$ in (4) is scaling invariant precisely when $\frac{N}{p} + \frac{2}{r} = 1$; i.e. 
\begin{equation*}
\int_{0}^{T} \lVert u_{\lambda}(x,t) \rVert_{L^{p}}^{r} dt = \int_{0}^{\lambda^{2} T} \lVert u(x,t)\rVert_{L^{p}}^{r} dt \hspace{1mm} \text{ if and only if } \hspace{1mm} \frac{N}{p} + \frac{2}{r} = 1,
\end{equation*}
where $u_{\lambda}(x,t) = \lambda u(\lambda x, \lambda^{2} t)$, and similarly for the norm in (5) at the endpoint of $2$. 

We now survey some component reduction results of such criterion. The authors in [20] showed that if $u$ solves the NSE with $N = 3, \nu > 0$ and 
\begin{align}
u_{3} \in& L^{r}(0, T; L^{p}(\mathbb{R}^{3})), \hspace{3mm} \frac{3}{p} + \frac{2}{r} \leq \frac{5}{8}, \hspace{3mm} r \in [\frac{54}{23}, \frac{18}{5}],\\
\text{ or } \nabla u_{3} \in& L^{r}(0, T; L^{p}(\mathbb{R}^{3})), \hspace{3mm} \frac{3}{p} + \frac{2}{r} \leq \frac{11}{6}, \hspace{3mm} r \in [\frac{24}{5}, \infty],\nonumber
\end{align}
then the solution is regular (see also [3, 39] for similar results on $u_{3}, \nabla u_{3}$).  For the MHD system, in particular the authors in [17] showed that if $(u,b)$ solves (2a)-(2c) with $N = 3, \nu, \eta > 0$ and 
\begin{equation}
u_{3}, b \in L^{r}(0, T; L^{p}(\mathbb{R}^{3})), \hspace{3mm} \frac{3}{p} + \frac{2}{r} \leq \frac{3}{4} + \frac{1}{2p}, \hspace{3mm} p > \frac{10}{3},
\end{equation}
then the solution pair $(u,b)$ remains smooth for all time. In [29], the author reduced this constraint on $u_{3}, b$ to $u_{3}, b_{1}, b_{2}$ in special cases making use of the special structure of (2b). For more interesting component reduction results of the regularity criterion, we refer to e.g. [4, 5, 12, 16, 21, 24, 28, 30, 36]. In particular, the authors in [7] obtained a regularity criterion for the three-dimensional NSE in terms of only $u_{3}$ in a scaling-invariant norm, although no longer $L_{T}^{r}L_{x}^{p}$-space (see also [33]). In relevance to our discussion below, we already emphasize that every component reduction result listed here is of the case $N = 3$. 

We now motivate the study of (1a)-(1b), (2a)-(2c) in fourth dimension specifically. It has been realized by many mathematicians working in the research direction of the NSE that the dimension four deserves special attention (see e.g. Section 4 [18]). The significance of the fourth dimension for the NSE (and six-dimensional stationary NSE) has motivated much investigation in the research direction of partial regularity theory (see e.g. [8, 10, 25]); we also recall (6) which holds only for $N = 3, 4$. In fact, fourth dimension being a certain threshold to the component reduction regularity criteria can be seen clearly as follows. To the best of the author's knowledge, all such component reduction results to the systems (1a)-(1b) and (2a)-(2c) are obtained through an $H^{1}$-estimate. Due to Lemma 2.3, higher regularity follows once we show that the solution e.g. $u$ in the case of the NSE (1a)-(1b) satisfies $\int_{0}^{T} \lVert \nabla u\rVert_{L^{N}(\mathbb{R}^{N})}^{2}d\tau < \infty$. This implies that because $H^{1}(\mathbb{R}^{N}) \hookrightarrow L^{N}(\mathbb{R}^{N})$ only for $N  =2, 3, 4$ but not $N > 4$ by Sobolev embedding, $H^{1}$-bound, from which $u\in L^{2}(0, T; H^{2}(\mathbb{R}^{N}))$ follows from the dissipative term, is sufficient for higher regularity only if $N = 2, 3, 4$. Thus, in dimension strictly higher than four, one needs to bound beyond $H^{1}$-norm; however, because the decomposition of the non-linear terms is the most important ingredient of component reduction results (see Proposition 3.1), this will complicate the proof significantly. To the best of the author's knowledge, component reduction results for dimension strictly larger than three does not exist in the literature. 

Let us also discuss the two major obstacles in extending the component reduction results of regularity criteria from dimension three to four. In the case of the NSE (1a)-(1b) with $N = 3, \nu > 0$, the standard procedure to obtain a criteria in terms of $u_{3}$ may be to, e.g. first estimate every partial derivative except the last and hence $\lVert \nabla_{1,2} u\rVert_{L^{2}}$ and in this process separate $u_{3}$ in the non-linear term:
\begin{equation}
\int (u\cdot\nabla) u \cdot\Delta_{1,2} u dx \leq c \int \lvert u_{3} \rvert \lvert \nabla u\rvert \lvert \nabla\nabla_{1,2} u\rvert dx
\end{equation}
where $\nabla_{1,2} = (\partial_{1}, \partial_{2}, 0), \Delta_{1,2} = \sum_{k=1}^{2}\partial_{kk}^{2}$ (cf. [20] Lemma 2.3). Thereafter, upon a full gradient and hence an $H^{1}$-estimate, on the non-linear term one separates $\lvert \nabla_{1,2} u\rvert$: 
\begin{equation}
\int (u\cdot\nabla) u \cdot\Delta u dx \leq c\int \lvert \nabla_{1,2} u\rvert \lvert \nabla u\rvert^{2} dx
\end{equation}
(cf. [39]) so that the $\lVert \nabla_{1,2} u\rVert_{L^{2}}$-estimate may be applied in (10). In the case $N  =4$, it seems difficult to separate $u_{3}$ or even $u_{3}$ and $u_{4}$ in $\int (u\cdot\nabla) u \cdot \Delta_{1,2,3}u dx$. Our first key observation is that we can separate $u_{3}, u_{4}$ from $\int (u\cdot\nabla) u \cdot \Delta_{1,2}u dx$ (See Proposition 3.1). However, this leaves two other directions, namely $x_{3}, x_{4}$, instead of only one in contrast to the case $N = 3$ and disables us to obtain an inequality analogous to (10) upon the full $H^{1}$-estimate due to a sum of this type: 
\begin{align*}
\sum_{j=1}^{4}\sum_{i,k=3}^{4}\int \partial_{k}u_{i}\partial_{i}u_{j}\partial_{k}u_{j}  dx
\end{align*}
(see (43)). We observe that in the three-dimensional case, $i,j$ and $k$ sum up to only 3 so that using $\nabla\cdot u = 0$ from (1b), one may deduce 
\begin{align*}
\sum_{j=1}^{3}\sum_{i,k=3}^{3}\int \partial_{k}u_{i}\partial_{i}u_{j}\partial_{k}u_{j} dx =& \sum_{j=1}^{3} \int \partial_{3}u_{3}\partial_{3}u_{j}\partial_{3}u_{j} dx \\
=& -\sum_{j=1}^{3} \int (\partial_{1}u_{1} + \partial_{2}u_{2})\partial_{3}u_{j}\partial_{3}u_{j}dx
\end{align*}
and hence (10) follows. However, in the four-dimensional case, there are cross-terms such as $\partial_{3}u_{4}$ which disables us to reach (10). Our second key observation is that the non-linear term may be seen as an operator as a sum of 
\begin{equation*}
u\cdot\nabla = \sum_{i=1}^{4}u_{i}\partial_{i} = \sum_{i=1}^{2}u_{i}\partial_{i} + \sum_{i=3}^{4}u_{i}\partial_{i}
\end{equation*}
so that in the first sum, the $\nabla_{1,2}$-estimate may be applied while in the second, use our hypothesis on $u_{3}, u_{4}$ (see (43) and also (46)).  

We now present our results: 
\begin{theorem}
Let $N = 4$ and  
\begin{equation}
u \in C([0,T); H^{s}(\mathbb{R}^{4})) \cap L^{2}([0,T); H^{s+1}(\mathbb{R}^{4}))
\end{equation}
be the solution to the NSE (1a)-(1b) for a given $u_{0} \in H^{s}(\mathbb{R}^{4}), s > 4$. Suppose $u_{3}, u_{4}$ with their corresponding $p_{i}, r_{i}, i = 3, 4$ satisfy the following roles of $f$: 
\begin{equation}
\int_{0}^{T} \lVert f\rVert_{L^{p_{i}}}^{r_{i}} d\tau \leq c, \hspace{3mm} \frac{4}{p_{i}} + \frac{2}{r_{i}} \leq \frac{1}{p_{i}} + \frac{1}{2}, \hspace{3mm} 6 < p_{i} \leq \infty, 
\end{equation}
or $\sup_{t\in [0, T]}\lVert f(t) \rVert_{L^{6}}$ being sufficiently small. Then $u$ remains in the same regularity class (11) on $[0, T']$ for some $T'> T$. 
\end{theorem}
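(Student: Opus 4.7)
The plan is a two-stage energy argument followed by a bootstrap. In stage one I derive a partial $H^{1}$-type bound controlling only $\nabla_{1,2}u$; in stage two I promote this to a full $H^{1}$-bound on $u$; in stage three Lemma 2.3 propagates the $H^{s}$-regularity past $T$. The only ingredients specific to the four-dimensional setting are the Proposition 3.1 decomposition of the nonlinearity (which isolates $u_{3},u_{4}$ from $\int(u\cdot\nabla)u\cdot\Delta_{1,2}u\,dx$) and the operator splitting $u\cdot\nabla=(u_{1}\partial_{1}+u_{2}\partial_{2})+(u_{3}\partial_{3}+u_{4}\partial_{4})$ emphasized in the introduction.

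For stage one I would take the $L^{2}$-inner product of (1a) with $-\Delta_{1,2}u$. The diffusive term dominates $\nu\sum_{k=1}^{2}\lVert\nabla\partial_{k}u\rVert_{L^{2}}^{2}$. Proposition 3.1 rewrites the trilinear term so that every summand carries a factor of $u_{3}$ or $u_{4}$, and the pressure integral is handled analogously via the representation $\pi=\sum_{i,j}(-\Delta)^{-1}\partial_{i}\partial_{j}(u_{i}u_{j})$ together with Calder\'on--Zygmund boundedness. H\"older followed by Gagliardo--Nirenberg interpolation (with exponents calibrated to the scaling relation in (12)) and Young's inequality then bound these contributions by a small multiple of the dissipation plus $C\sum_{i=3}^{4}\lVert u_{i}\rVert_{L^{p_{i}}}^{r_{i}}\lVert\nabla_{1,2}u\rVert_{L^{2}}^{2}$. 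Gronwall yields bounds on $\lVert\nabla_{1,2}u\rVert_{L^{\infty}_{t}L^{2}_{x}}$ and $\sum_{k=1}^{2}\lVert\nabla\partial_{k}u\rVert_{L^{2}_{t}L^{2}_{x}}$ in terms of the hypothesis (12).

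For stage two I would test (1a) against $-\Delta u$ and split the nonlinear integral into the two operator pieces above. The contribution of $(u_{3}\partial_{3}+u_{4}\partial_{4})u$ is treated exactly as in stage one and absorbed via the hypothesis on $u_{3},u_{4}$ directly. The contribution of $(u_{1}\partial_{1}+u_{2}\partial_{2})u$ is controlled, via H\"older, by a product of $L^{q}$-norms of $\nabla_{1,2}u$, $\nabla u$, and $\Delta u$; after two Gagliardo--Nirenberg interpolations and Young's inequality one arrives at a schematic bound
\begin{equation*}
\tfrac{\nu}{4}\lVert\Delta u\rVert_{L^{2}}^{2} + C\lVert\nabla_{1,2}u\rVert_{L^{2}}^{2\alpha}\Big(\sum_{k=1}^{2}\lVert\nabla\partial_{k}u\rVert_{L^{2}}^{2}\Big)^{1-\alpha}\lVert\nabla u\rVert_{L^{2}}^{2}
\end{equation*}
for some $\alpha\in(0,1]$. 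The time-integrability of the weight on $\lVert\nabla u\rVert_{L^{2}}^{2}$ is precisely what stage one supplies, so Gronwall closes the $H^{1}$-estimate.

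Stage three is standard: $u\in L^{\infty}(0,T;H^{1})\cap L^{2}(0,T;H^{2})$ together with the Sobolev embedding $H^{1}(\mathbb{R}^{4})\hookrightarrow L^{4}(\mathbb{R}^{4})$ gives $\int_{0}^{T}\lVert\nabla u\rVert_{L^{4}}^{2}\,d\tau<\infty$, so Lemma 2.3 extends the $H^{s}$-regularity past $T$. The endpoint case in which $\sup_{t}\lVert u_{i}\rVert_{L^{6}}$ is assumed merely small is handled by absorbing the corresponding contribution directly into the dissipation in place of invoking Gronwall. I expect the main obstacle to be stage two: in three dimensions the troublesome cross-term disappears after using $\nabla\cdot u=0$, as the author recalls, but in four dimensions the cross-derivatives $\partial_{3}u_{4},\partial_{4}u_{3}$ survive that manipulation, and the closing argument genuinely depends on the operator splitting together with the stage one control on $\nabla_{1,2}u$.
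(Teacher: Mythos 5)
Your overall architecture (partial $\nabla_{1,2}$-estimate, then full $H^{1}$-estimate via the operator splitting, then Lemma 2.3) matches the paper's, but there is a genuine gap at the heart of stage one: the claim that the $\nabla_{1,2}$-estimate closes by itself via Gronwall is false. After Proposition 3.1, the trilinear term is bounded by $\int (\lvert u_{3}\rvert + \lvert u_{4}\rvert)\lvert \nabla u\rvert \lvert \nabla\nabla_{1,2}u\rvert$, and the middle factor is the \emph{full} gradient, not $\nabla_{1,2}u$. H\"older gives $\lVert u_{i}\rVert_{L^{p_{i}}}\lVert \nabla u\rVert_{L^{2p_{i}/(p_{i}-2)}}\lVert \nabla\nabla_{1,2}u\rVert_{L^{2}}$ with $\frac{2p_{i}}{p_{i}-2}>2$, and in $\mathbb{R}^{4}$ there is no anisotropic estimate controlling $\lVert \nabla u\rVert_{L^{q}}$, $q>2$, by $\lVert \nabla u\rVert_{L^{2}}$ and $\lVert \nabla\nabla_{1,2}u\rVert_{L^{2}}$ alone — one gains integrability only in the directions one differentiates. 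Any admissible interpolation (e.g. via Troisi's inequality (18), $\lVert \nabla u\rVert_{L^{4}}\lesssim \lVert \nabla\nabla_{1,2}u\rVert_{L^{2}}^{1/2}\lVert \Delta u\rVert_{L^{2}}^{1/2}$) necessarily brings in $Z=\lVert \Delta u\rVert_{L^{2}}^{2}$, i.e. precisely the quantity whose time-integrability you are trying to prove. This is why the paper's Proposition 3.2 is only a \emph{conditional} estimate, $\sup_{\tau}W+\int_{0}^{t}Y\leq W(0)+c\sum_{i}\int_{0}^{t}\lVert u_{i}\rVert_{L^{p_{i}}}^{\frac{2p_{i}}{p_{i}-2}}X^{\frac{p_{i}-4}{p_{i}-2}}Z^{\frac{2}{p_{i}-2}}d\tau$, with the full-gradient quantities $X,Z$ on the right-hand side, rather than a closed bound.

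Consequently your stage two, which presumes stage one already supplies a finite weight, does not go through as written. The actual closure (the paper's Proposition 3.3) must treat the two estimates as coupled: the term $W^{\frac12}Y^{\frac12}Z^{\frac12}$ arising in the full $H^{1}$-estimate is integrated in time, bounded by $\sup W^{\frac12}(\int Y)^{\frac12}(\int Z)^{\frac12}$, the conditional Proposition 3.2 is substituted for $\sup W+\int Y$, and then H\"older and Young's inequalities are used so that the total power of $\int_{0}^{t}Z$ on the right stays strictly below one and can be absorbed into the dissipation; only after this absorption does Gronwall apply, with weight $\lVert u_{i}\rVert_{L^{p_{i}}}^{\frac{4p_{i}}{p_{i}-6}}$. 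This absorption step is exactly where the restrictions $6<p_{i}\leq\infty$ and $\frac{4}{p_{i}}+\frac{2}{r_{i}}\leq\frac12+\frac{1}{p_{i}}$ (i.e. $r_{i}\geq\frac{4p_{i}}{p_{i}-6}$) originate, and it is absent from your proposal; without it the stated exponent range is unexplained. Two minor points: the pressure term vanishes identically when testing with $-\Delta_{1,2}u$ or $-\Delta u$, by integration by parts and $\nabla\cdot u=0$, so no Calder\'on--Zygmund argument is needed; and your stage three and your remark on the $L^{6}$ smallness endpoint are consistent with the paper.
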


\begin{theorem}
Let $N = 4$ and $u$ in the regularity class of (11) be the solution to the NSE (1a)-(1b) for a given $u_{0} \in H^{s}(\mathbb{R}^{4}), s > 4$. Suppose $\nabla u_{3}, \nabla u_{4}$ with their corresponding $p_{i}, r_{i}, i = 3, 4$ satisfy the following roles of $f$:  
\begin{equation}
\int_{0}^{T} \lVert f\rVert_{L^{p_{i}}}^{r_{i}} d\tau \leq c, \hspace{3mm} \frac{4}{p_{i}} + \frac{2}{r_{i}} \leq \begin{cases}
\frac{5}{4} + \frac{1}{p_{i}},  &\text{ if } \frac{12}{5} < p_{i} \leq 4\\
1 + \frac{2}{p_{i}},  &\text{ if } 4 < p_{i} \leq \infty
\end{cases}, \hspace{3mm} 
\end{equation}
or $\sup_{t\in [0, T]}\lVert f(t) \rVert_{L^{\frac{12}{5}}}$ being sufficiently small. Then $u$ remains in the same regularity class (11) on $[0, T']$ for some $T'> T$. 

\end{theorem}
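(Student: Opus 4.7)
Local well-posedness in $H^s$ with $s>4$ being standard, the plan is to extend the solution past $T$ via a continuation criterion driven by an $H^1$-estimate. Since $H^1(\mathbb{R}^4)\hookrightarrow L^4(\mathbb{R}^4)$, once I establish $u\in L^\infty([0,T];H^1)\cap L^2([0,T];H^2)$, it follows that $\int_0^T\|\nabla u\|_{L^4}^2\,d\tau<\infty$, and Lemma 2.3 promotes this to continuation in $H^s$. The real work is thus a controlled $H^1$-estimate, which I would run as two coupled energy inequalities, mirroring the two key observations distilled by the author around (9)--(10) and Proposition 3.1. The pressure conveniently drops out of both inequalities, since $\nabla\cdot\Delta_{1,2}u=\nabla\cdot\Delta u=0$ by (1b).

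First, I would test (1a) against $-\Delta_{1,2}u$. Proposition 3.1 dominates the trilinear term by integrals of the form $\int|u_i|\,|\nabla u|\,|\nabla\nabla_{1,2}u|\,dx$ with $i\in\{3,4\}$, and one integration by parts in $x_3$ or $x_4$ trades $u_i$ for $\nabla u_i$ (distributing the lost derivative onto whichever of $\nabla u$ or $\nabla\nabla_{1,2}u$ interpolates best). A Hölder--Gagliardo--Nirenberg--Young chain then absorbs the highest-order piece into the dissipation and yields
\begin{equation*}
\tfrac{d}{dt}\|\nabla_{1,2}u\|_{L^2}^2+\|\nabla\nabla_{1,2}u\|_{L^2}^2\le C\sum_{i=3,4}\|\nabla u_i\|_{L^{p_i}}^{r_i}\|\nabla u\|_{L^2}^2.
\end{equation*}
Second, I would test (1a) against $-\Delta u$ and use the author's decomposition
\begin{equation*}
(u\cdot\nabla)u=(u_1\partial_1+u_2\partial_2)u+(u_3\partial_3+u_4\partial_4)u.
\end{equation*}
After integration by parts the first block carries a $\nabla_{1,2}u$ factor, whose time-integrability is fed in from the previous estimate through Gagliardo--Nirenberg. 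The second block, again after an integration by parts in $x_3$ or $x_4$, exposes $\nabla u_3$ or $\nabla u_4$ in $L^{p_i}$; the remaining pair of factors is balanced between $\|\nabla u\|_{L^2}$ and $\|\Delta u\|_{L^2}$. Matching exponents in each of the two regimes $\frac{12}{5}<p_i\le 4$ and $4<p_i\le\infty$ of (13) and applying Young, I obtain a parallel inequality for $\|\nabla u\|_{L^2}^2$ with an $L^1_T$-integrable right-hand side.

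Summing the two inequalities and invoking Gronwall closes the bookkeeping. The scaling-critical endpoint $p_i=12/5$, $r_i=\infty$ is precisely the place where Gronwall degenerates, forcing the smallness assumption on $\|\nabla u_i\|_{L^\infty_T L^{12/5}}$ so that the corresponding nonlinear term can be absorbed outright into the dissipation. The hardest step will be the interpolation arithmetic in the second estimate: the case split at $p_i=4$ in (13) reflects whether Gagliardo--Nirenberg calls for a power of $\|\Delta u\|_{L^2}$ below or above $1$, and I must verify in both regimes that the total power of $\|\Delta u\|_{L^2}$ stays $\le 2$ with an absorbable coefficient, and that the time exponent left on $\|\nabla u_i\|_{L^{p_i}}$ is exactly the $r_i$ permitted by (13). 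A secondary subtlety is ensuring that the integration by parts in the second block genuinely produces $\nabla u_3$ or $\nabla u_4$ rather than a mixed cross-term such as $\partial_3u_4$ that the author flagged as the obstruction after (10); the decomposition $u\cdot\nabla=(u_1\partial_1+u_2\partial_2)+(u_3\partial_3+u_4\partial_4)$ is tailored so that this problem is sidestepped.
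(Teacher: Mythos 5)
Your overall architecture is the one the paper uses (a conditional $\lVert \nabla_{1,2}u\rVert_{L^{2}}$-estimate from the second bound (21) of Proposition 3.1, a full $H^{1}$-estimate using the splitting $u\cdot\nabla=\sum_{i=1}^{2}u_{i}\partial_{i}+\sum_{i=3}^{4}u_{i}\partial_{i}$, then Gronwall, with smallness reserved for the endpoint $p_{i}=\tfrac{12}{5}$), but your two key quantitative claims fail exactly where the difficulty of the theorem sits. First, your displayed inequality $\tfrac{d}{dt}\lVert\nabla_{1,2}u\rVert_{L^{2}}^{2}+\lVert\nabla\nabla_{1,2}u\rVert_{L^{2}}^{2}\leq C\sum_{i}\lVert\nabla u_{i}\rVert_{L^{p_{i}}}^{r_{i}}\lVert\nabla u\rVert_{L^{2}}^{2}$ is unobtainable in the main regime $\tfrac{12}{5}<p_{i}<4$. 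In the trilinear term $\int\lvert\nabla u_{i}\rvert\lvert\nabla_{1,2}u\rvert\lvert\nabla u\rvert$, H\"older forces $\nabla_{1,2}u\in L^{q_{1}}$ and $\nabla u\in L^{q_{2}}$ with $\tfrac{1}{q_{1}}+\tfrac{1}{q_{2}}=1-\tfrac{1}{p_{i}}$; the only dissipation available in this estimate is $Y=\lVert\nabla\nabla_{1,2}u\rVert_{L^{2}}^{2}$, which by Sobolev/Gagliardo--Nirenberg in $\mathbb{R}^{4}$ controls $\nabla_{1,2}u$ at most up to $L^{4}$. To avoid a power of $Z=\lVert\Delta u\rVert_{L^{2}}^{2}$ you would need $q_{2}=2$, which forces $q_{1}=\tfrac{2p_{i}}{p_{i}-2}>4$ when $p_{i}<4$ --- out of reach. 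Hence a positive power $Z^{\frac{4-p_{i}}{3p_{i}-4}}$ necessarily appears (this is the paper's Proposition 4.1), and the first estimate is genuinely conditional on the second, not a self-contained Gronwall inequality.

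Second, ``summing the two inequalities and invoking Gronwall'' does not close even after correcting the first estimate. The full $H^{1}$-estimate contains the block with $i\in\{1,2\}$, which produces $\int\lvert\nabla_{1,2}u\rvert\lvert\nabla u\rvert^{2}\lesssim W^{\frac{1}{2}}Y^{\frac{1}{2}}Z^{\frac{1}{2}}$ via Lemma 2.1 (the terms $IV_{1},V_{1}$ in the paper). Pointwise in time, every Young split of this term leaves either $W^{\frac{1}{2}}(Y+Z)$, absorbable only if $W$ is small, or a product such as $WY$ or $WZ$ that is quadratic in dissipation-order quantities and so not of Gronwall type. The paper's resolution is to integrate in time \emph{first}, bound $\int_{0}^{t}W^{\frac{1}{2}}Y^{\frac{1}{2}}Z^{\frac{1}{2}}d\tau\leq\sup_{\tau}W^{\frac{1}{2}}(\int_{0}^{t}Y)^{\frac{1}{2}}(\int_{0}^{t}Z)^{\frac{1}{2}}$, insert the conclusion of Proposition 4.1, and then run H\"older and Young in the time variable together with the energy inequality (3) (which supplies $\int_{0}^{t}X\,d\tau\lesssim1$); it is precisely this integrated bookkeeping that produces the exponents $\tfrac{8p_{i}}{5p_{i}-12}$ (for $\tfrac{12}{5}<p_{i}\leq4$) and $\tfrac{2p_{i}}{p_{i}-2}$ (for $p_{i}>4$) matching (13), with the Young absorption of $(\int_{0}^{t}Z)^{\frac{4+p_{i}}{2(3p_{i}-4)}}$ admissible exactly when $p_{i}>\tfrac{12}{5}$, degenerating at the endpoint where your (correct) smallness intuition takes over. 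A minor further inaccuracy: in this theorem no integration by parts is needed to expose $\nabla u_{3},\nabla u_{4}$ --- the trilinear terms contain them already, and the block with all indices in $\{1,2\}$ is handled by incompressibility, $\partial_{1}u_{1}+\partial_{2}u_{2}=-(\partial_{3}u_{3}+\partial_{4}u_{4})$; your proposal to ``trade $u_{i}$ for $\nabla u_{i}$'' by parts runs the integration by parts in the wrong direction, since (20) is itself the output of such an integration applied to the original trilinear form.
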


\begin{theorem}
Let $N = 4$ and  
\begin{equation}
u, b \in C([0,T); H^{s}(\mathbb{R}^{4})) \cap L^{2}([0,T); H^{s+1}(\mathbb{R}^{4}))
\end{equation}
be the solution pair to the MHD system (2a)-(2c) for a given $u_{0}, b_{0} \in H^{s}(\mathbb{R}^{4}), s > 4$. Suppose $u_{3}, u_{4}, b$ with their corresponding $p_{i}, r_{i}, i = 3, 4, b$ satisfy the following roles of $f$: 
\begin{equation}
\int_{0}^{T} \lVert f\rVert_{L^{p_{i}}}^{r_{i}} d\tau \leq c, \hspace{3mm} \frac{4}{p_{i}} + \frac{2}{r_{i}} \leq \frac{1}{p_{i}} + \frac{1}{2}, \hspace{3mm} 6 < p_{i} \leq \infty, 
\end{equation}
or $\sup_{t\in [0, T]}\lVert f(t) \rVert_{L^{6}}$ being sufficiently small. Then $u,b$ remain in the same regularity class (14) on $[0, T']$ for some $T'> T$. 
\end{theorem}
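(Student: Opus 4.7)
The plan is to establish an \emph{a priori} $H^{1}$-bound on $(u,b)$ on $[0,T]$. By Lemma 2.3, combined with the Sobolev embedding $H^{1}(\mathbb{R}^{4}) \hookrightarrow L^{4}(\mathbb{R}^{4})$ that distinguishes the fourth dimension, such a bound forces $u,b \in L^{2}(0,T; H^{2}(\mathbb{R}^{4}))$ and hence propagates the regularity class (14). To obtain it I take $L^{2}$ inner products of (2a), (2b) against $-\Delta u$, $-\Delta b$ respectively and sum, producing
\begin{equation*}
\frac{1}{2}\partial_{t}(\lVert\nabla u\rVert_{L^{2}}^{2} + \lVert\nabla b\rVert_{L^{2}}^{2}) + \nu\lVert\Delta u\rVert_{L^{2}}^{2} + \eta\lVert\Delta b\rVert_{L^{2}}^{2} = I + II + III + IV,
\end{equation*}
where the pressure vanishes by $\nabla\cdot u = 0$ and $I,\ldots,IV$ collect the contributions from $(u\cdot\nabla)u$, $(b\cdot\nabla)b$, $(u\cdot\nabla)b$, and $(b\cdot\nabla)u$.

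Following the two-step structure anticipated for Theorem 1.1, I would first carry out a $\nabla_{1,2}$-preliminary estimate by taking the $L^{2}$ inner products against $-\Delta_{1,2}u$ and $-\Delta_{1,2}b$. On the pure-velocity piece $\int (u\cdot\nabla)u\cdot\Delta_{1,2}u\,dx$, Proposition 3.1 isolates $u_{3},u_{4}$ and yields the pointwise bound $c(|u_{3}|+|u_{4}|)|\nabla u||\nabla\nabla_{1,2}u|$, to which the Serrin hypothesis (15) on $u_{3},u_{4}$ applies. The three magnetic-coupled integrals are bounded by H\"older with one factor carrying the Serrin-scaled norm $\lVert b\rVert_{L^{p_{b}}}$ furnished by (15), one intermediate factor of $\nabla u$ or $\nabla b$ handled by interpolation between $L^{2}$ and $L^{4}$ via $H^{1}\hookrightarrow L^{4}$ in four dimensions, and one highest-order factor absorbed by Young into $\nu\lVert\Delta u\rVert_{L^{2}}^{2} + \eta\lVert\Delta b\rVert_{L^{2}}^{2}$. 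The resulting coupled differential inequality for $\lVert\nabla_{1,2}u\rVert_{L^{2}}^{2} + \lVert\nabla_{1,2}b\rVert_{L^{2}}^{2}$ closes by Gronwall under (15).

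For the subsequent full $H^{1}$ estimate I would exploit the second key observation of the paper, namely the operator splitting $u\cdot\nabla = \sum_{i=1}^{2}u_{i}\partial_{i} + \sum_{i=3}^{4}u_{i}\partial_{i}$: the first sum is absorbed using the $\nabla_{1,2}u$ bound from the preliminary step, and the second using the hypothesis on $u_{3}, u_{4}$ from (15). The troublesome cross-term involving $\sum_{j,k=3}^{4}\partial_{k}u_{i}\,\partial_{i}u_{j}\,\partial_{k}u_{j}$ identified in (43) responds to the same decomposition. Crucially, no analogous component reduction is needed on the $b$-side because (15) controls all of $b$; the three magnetic integrals $II, III, IV$ are then dispatched by H\"older, placing $b$ in $L^{p_{b}}$, $\nabla u$ or $\nabla b$ in an intermediate Lebesgue space, and absorbing the top derivatives $\Delta u, \Delta b$ into the dissipation, exactly as in the preliminary step but now at the level of full $\nabla$ rather than $\nabla_{1,2}$.

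The main obstacle I anticipate is the simultaneous treatment of the mixed $u$--$b$ contributions in the preliminary $\nabla_{1,2}$ step: because $(b\cdot\nabla)b$, $(u\cdot\nabla)b$, and $(b\cdot\nabla)u$ intertwine derivatives of the two fields, one cannot decouple the estimates and must instead close a coupled Gronwall inequality for the joint quantity $\lVert\nabla_{1,2}u\rVert_{L^{2}}^{2} + \lVert\nabla_{1,2}b\rVert_{L^{2}}^{2}$. Verifying that the interpolation exponents arising from the H\"older estimates of $II, III, IV$ remain compatible with the Serrin scaling $\tfrac{4}{p_{b}}+\tfrac{2}{r_{b}} \leq \tfrac{1}{p_{b}}+\tfrac{1}{2}$ uniformly across $6 < p_{b} \leq \infty$, together with the endpoint smallness case $p_{b}=6$, is where the bookkeeping requires care. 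Once this coupled bound is secured, the full $H^{1}$ step mirrors the proof of Theorem 1.1 with additive magnetic contributions, and Gronwall's lemma yields the required continuation on $[0,T']$.
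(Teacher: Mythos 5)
Your overall architecture (a $\nabla_{1,2}$-preliminary estimate via the decomposition of Proposition 3.1, then a full $H^{1}$ estimate via the splitting $u\cdot\nabla = \sum_{i=1}^{2}u_{i}\partial_{i} + \sum_{i=3}^{4}u_{i}\partial_{i}$, then Lemma 2.3) matches the paper, but there is a genuine gap at the pivotal step: your claim that the preliminary estimate for $W(t) = \lVert\nabla_{1,2}u\rVert_{L^{2}}^{2} + \lVert\nabla_{1,2}b\rVert_{L^{2}}^{2}$ ``closes by Gronwall under (15)'' is false. After H\"older's inequality the nonlinear terms carry the factor $\lVert \nabla u\rVert_{L^{2p_{i}/(p_{i}-2)}}$ (similarly for $b$), and interpolating it requires $\lVert \nabla u\rVert_{L^{4}}$, which necessarily costs second derivatives in the $x_{3},x_{4}$ directions: via Sobolev embedding it costs $\lVert \Delta u\rVert_{L^{2}}$, and even via the anisotropic Lemma 2.1 it costs $\lVert \partial_{3}\nabla u\rVert_{L^{2}}^{1/4}\lVert \partial_{4}\nabla u\rVert_{L^{2}}^{1/4}$. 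These are controlled by $Z(t)$ but \emph{not} by the dissipation $Y(t) = \lVert\nabla\nabla_{1,2}u\rVert_{L^{2}}^{2} + \lVert\nabla\nabla_{1,2}b\rVert_{L^{2}}^{2}$ that this energy estimate produces, so after Young's inequality the right-hand side retains an unabsorbable factor $Z^{2/(p_{i}-2)}(t)$ times powers of $X(t)$, neither of which is yet bounded. (Even in the easiest case $p_{i}=\infty$ the right-hand side is $\lVert u_{i}\rVert_{L^{\infty}}^{2}X(t)$, involving the full gradient $X$, not $W$.) This is exactly why the paper's Proposition 3.2 is stated as a \emph{conditional} bound, $\sup_{\tau}W + \int_{0}^{t}Y \leq W(0) + c\sum_{i}\int_{0}^{t}\lVert u_{i}\rVert_{L^{p_{i}}}^{2p_{i}/(p_{i}-2)}X^{(p_{i}-4)/(p_{i}-2)}Z^{2/(p_{i}-2)}\,d\tau + \cdots$, with $X$ and $Z$ left as unknowns, rather than as a closed a priori estimate.

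The closure happens only inside the full $H^{1}$ estimate, and the order of operations is the heart of the proof. There, the dangerous contributions $IV_{1}, IV_{3} \lesssim W^{1/2}(t)Y^{1/2}(t)Z^{1/2}(t)$ are first integrated in time and bounded by $\sup_{\tau}W^{1/2}\bigl(\int_{0}^{t}Y\bigr)^{1/2}\bigl(\int_{0}^{t}Z\bigr)^{1/2}$; only then is the conditional Proposition 3.2 substituted for the first two factors, after which Young's inequality absorbs the resulting powers of $\int_{0}^{t}Z$ into the left-hand side. This absorption works precisely because the strict inequality $p_{i}>6$ makes the exponent $(p_{i}+2)/(2(p_{i}-2))$ on $\int_{0}^{t}Z$ strictly less than one (the endpoint $p_{i}=6$ survives only under the smallness hypothesis), and Gronwall is applied once, at the very end, to $X$, using the energy inequality (3) to control $\int_{0}^{t}X\,d\tau$. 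So the two estimates are not sequential, as your outline assumes (``close $W$ first, then do $X$''); they are genuinely nested, and for the same reason your statement that the first sum in the operator splitting ``is absorbed using the $\nabla_{1,2}u$ bound from the preliminary step'' presupposes an absolute bound on $W$ that is not available. Recognizing that the $W$-estimate must be carried as a conditional bound inside the $X$-estimate, and that the hypothesis $p_{i}>6$ is what makes the final Young absorption legitimate, is the central idea your proposal is missing.
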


\begin{theorem}
Let $N = 4$ and $u,b$ in the regularity class of (14) be the solution pair to the MHD system (2a)-(2c) for a given $u_{0}, b_{0} \in H^{s}(\mathbb{R}^{4}), s > 4$. Suppose $\nabla u_{3}, \nabla u_{4}, \nabla b$ with their corresponding $p_{i}, r_{i}, i = 3, 4, b$ satisfy the following roles of $f$: 
\begin{equation}
\int_{0}^{T} \lVert f\rVert_{L^{p_{i}}}^{r_{i}} d\tau \leq c, \hspace{1mm} \frac{4}{p_{i}} + \frac{2}{r_{i}} \leq \begin{cases}
\frac{5}{4} + \frac{1}{p_{i}},  &\text{ if } \frac{12}{5} < p_{i} \leq 4\\
1 + \frac{2}{p_{i}},  &\text{ if } 4 < p_{i} \leq \infty
\end{cases}, \hspace{1mm} 
\end{equation}
or $\sup_{t\in [0, T]}\lVert f (t)\rVert_{L^{\frac{12}{5}}}$ being sufficiently small. Then $u,b$ remain in the same regularity class (14) on $[0, T']$ for some $T'> T$. 
\end{theorem}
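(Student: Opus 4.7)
The plan is to adapt the proof of Theorem 1.2 (the 4D NSE case with $\nabla u_{3},\nabla u_{4}$) to the MHD setting by absorbing the magnetic interaction terms $(b\cdot\nabla)b$ and $(b\cdot\nabla)u$, and coupling them symmetrically via the hypothesis on $\nabla b$. The global energy identity (3) already supplies $(u,b)\in L^{\infty}_{T}L^{2}\cap L^{2}_{T}H^{1}$, so by the discussion after (5) it suffices to propagate an $H^{1}$-bound on $(u,b)$ up to $T$; once $u,b\in L^{\infty}_{T}H^{1}\cap L^{2}_{T}H^{2}$, the Sobolev embedding $H^{1}(\mathbb{R}^{4})\hookrightarrow L^{4}(\mathbb{R}^{4})$ combined with the MHD analog of Lemma 2.3 bootstraps back to the class (14). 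The argument is organized in two energy-type stages.

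In the first stage I run a $\nabla_{1,2}$-estimate: pair $-\Delta_{1,2}$ applied to (2a) with $u$ and $-\Delta_{1,2}$ applied to (2b) with $b$, add, and integrate by parts; the pressure contribution cancels via $\nabla\cdot u=0$. For $\int(u\cdot\nabla)u\cdot\Delta_{1,2}u\,dx$ I invoke Proposition 3.1 to extract factors involving only $u_{3},u_{4}$ (as in (9) for the 4D case), then Gagliardo--Nirenberg interpolation exchanges $\lVert u_{i}\rVert_{L^{q}}$ for a product of $\lVert u\rVert_{L^{2}}$ and $\lVert\nabla u_{i}\rVert_{L^{p_{i}}}$, with the exponents $(p_{i},r_{i})$ in (16) chosen precisely so that the resulting Young inequality closes against the dissipation $\nu\lVert\nabla\nabla_{1,2}u\rVert_{L^{2}}^{2}$. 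The MHD cross-terms $\int(b\cdot\nabla)b\cdot\Delta_{1,2}u\,dx+\int(b\cdot\nabla)u\cdot\Delta_{1,2}b\,dx$ are grouped so that the standard MHD cancellation survives the extra $\Delta_{1,2}$, and the remainder is split into a $\sum_{i=1}^{2}b_{i}\partial_{i}$ piece and a $\sum_{i=3}^{4}b_{i}\partial_{i}$ piece; the latter is controlled by $\lVert\nabla b\rVert_{L^{p_{b}}}$ after one integration by parts and the same Gagliardo--Nirenberg interpolation.

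In the second stage I pass to the full $H^{1}$-estimate: apply $-\Delta$ to (2a)--(2b), pair with $(u,b)$, and add. Following the second key observation in the introduction, I decompose
\begin{equation*}
u\cdot\nabla=\sum_{i=1}^{2}u_{i}\partial_{i}+\sum_{i=3}^{4}u_{i}\partial_{i},\qquad b\cdot\nabla=\sum_{i=1}^{2}b_{i}\partial_{i}+\sum_{i=3}^{4}b_{i}\partial_{i}.
\end{equation*}
The $i\in\{1,2\}$ blocks produce $\lVert\nabla_{1,2}u\rVert_{L^{2}}$- and $\lVert\nabla_{1,2}b\rVert_{L^{2}}$-style factors that are closed by feeding in the output of the first stage together with $\lVert\nabla u\rVert_{L^{4}}^{2}\lesssim\lVert\nabla u\rVert_{L^{2}}\lVert\Delta u\rVert_{L^{2}}$ (and the same for $b$). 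The $i\in\{3,4\}$ blocks are closed by the hypothesis on $\nabla u_{3},\nabla u_{4},\nabla b$ via one additional Gagliardo--Nirenberg interpolation, absorbing one factor of $\lVert\Delta u\rVert_{L^{2}}+\lVert\Delta b\rVert_{L^{2}}$ via Young. The pressure contributes through $-\Delta\pi=\sum_{i,j}\partial_{i}\partial_{j}(u_{i}u_{j}-b_{i}b_{j})$, so that Calder\'on--Zygmund gives control of $\nabla\pi$ in terms of $u\otimes u$ and $b\otimes b$, and the resulting terms submit to the same component-splitting.

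The main obstacle is matching the bifurcated threshold in (16): the case split at $p_{i}=4$ reflects whether the Gagliardo--Nirenberg interpolation used to pass from $L^{p_{i}}$-control of $\nabla u_{3},\nabla u_{4},\nabla b$ to pointwise estimates of $u_{3},u_{4},b$ saturates the Sobolev exponent, which forces one to track two distinct interpolation regimes and to verify that the dissipation $\nu\lVert\Delta u\rVert_{L^{2}}^{2}+\eta\lVert\Delta b\rVert_{L^{2}}^{2}$ can absorb the resulting powers in each regime. A secondary subtlety is preserving the standard MHD cancellation between $\int(b\cdot\nabla)b\cdot\Delta u\,dx$ and $\int(b\cdot\nabla)u\cdot\Delta b\,dx$ through both the $\nabla_{1,2}$- and full $H^{1}$-pairings, which requires carefully ordered integrations by parts to avoid generating uncontrollable $\partial b\,\partial u\,\partial^{2}b$ terms. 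Once both stages are summed, an application of Gronwall to the combined functional $\lVert\nabla_{1,2}(u,b)\rVert_{L^{2}}^{2}+\lVert\nabla(u,b)\rVert_{L^{2}}^{2}$ yields the desired $H^{1}$-bound on $[0,T]$; the small-data alternative with $\sup_{t}\lVert f\rVert_{L^{12/5}}$ sufficiently small follows because in that regime the troublesome coefficient can be absorbed directly into the dissipation instead of integrated in time.
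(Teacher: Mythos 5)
There is a genuine gap, and it sits exactly at the mechanism by which you use the hypothesis. Your proposal invokes the wrong half of Proposition 3.1: you extract the \emph{undifferentiated} components $u_{3},u_{4}$ (inequality (20), the 4D analogue of (9)) and then try to bridge to the assumption on $\nabla u_{3},\nabla u_{4},\nabla b$ via Gagliardo--Nirenberg, $\lVert u_{i}\rVert_{L^{q}}\lesssim\lVert u_{i}\rVert_{L^{2}}^{1-\theta}\lVert\nabla u_{i}\rVert_{L^{p_{i}}}^{\theta}$, plus the energy bound (3). This bridge cannot close on the range $4< p_{i}\leq\infty$ of (16). Concretely, take the admissible endpoint $p_{i}=\infty$, $r_{i}=2$. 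In your chain the hypothesis enters only through quantities $\int_{0}^{T}\lVert u_{i}\rVert_{L^{q}}^{\tilde{r}}d\tau$; Gagliardo--Nirenberg in $\mathbb{R}^{4}$ forces $\frac{1}{q}=\frac{1}{2}-\frac{3\theta}{4}$, and finiteness of the time integral forces $\theta\tilde{r}\leq r_{i}=2$, hence $\frac{2}{\tilde{r}}\geq\theta$ and
\begin{equation*}
\frac{3}{q}+\frac{2}{\tilde{r}}\;\geq\;\frac{3}{q}+\theta\;=\;\frac{2}{3}+\frac{5}{3q}\;>\;\frac{2}{3}.
\end{equation*}
But pairing $\lvert u_{i}\rvert$ against the second-order factor $\lvert\nabla\nabla_{1,2}u\rvert$ in (20) and closing by Young against the dissipation $\lVert\nabla\nabla_{1,2}u\rVert_{L^{2}}^{2}$ is precisely the estimate behind Theorem 1.3, which requires $\frac{3}{q}+\frac{2}{\tilde{r}}\leq\frac{1}{2}$ (condition (15)), or $q=6$ with smallness. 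So no choice of $q,\theta,\tilde{r}$ works: a full derivative is wasted by passing through $u_{i}$. (Your route does succeed for $\frac{12}{5}<p_{i}<4$, where pure Sobolev embedding, $\theta=1$, $q=\frac{4p_{i}}{4-p_{i}}>6$, converts (16) exactly into (15), and for the $L^{12/5}$-smallness alternative since $\dot{W}^{1,12/5}(\mathbb{R}^{4})\hookrightarrow L^{6}$; the failure is exactly $p_{i}\geq 4$.) The paper avoids the conversion altogether: it proves the \emph{second} inequality (21) of Proposition 3.1, in which $\lvert\nabla u_{3}\rvert,\lvert\nabla u_{4}\rvert$ multiply the two first-order factors $\lvert\nabla_{1,2}u\rvert\lvert\nabla u\rvert$ (and $\lvert\nabla b\rvert\lvert\nabla_{1,2}b\rvert\lvert\nabla u\rvert$ for the magnetic terms), and in the full $H^{1}$-step it bounds the $i,k\in\{3,4\}$ block of $\Delta_{3,4}$ directly by $\sum_{i=3}^{4}\int\lvert\nabla u_{i}\rvert\lvert\nabla u\rvert^{2}$ with no integration by parts onto $u_{i}$ (see (63)--(64)); the same structural point fixes your Stage 2, where you again keep $u_{i}$ as the coefficient of $\sum_{i=3}^{4}u_{i}\partial_{i}$.

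Two further corrections. First, the inequality you propose for the $i\in\{1,2\}$ blocks, $\lVert\nabla u\rVert_{L^{4}}^{2}\lesssim\lVert\nabla u\rVert_{L^{2}}\lVert\Delta u\rVert_{L^{2}}$, is false in $\mathbb{R}^{4}$: the left side is scale-invariant there while the right side scales like $\lambda^{-1}$ (it is the two-dimensional Ladyzhenskaya inequality). What the argument needs, and what the paper uses, is Troisi's anisotropic inequality (18), which yields $\lVert\nabla u\rVert_{L^{4}}^{2}\lesssim\lVert\nabla\nabla_{1,2}u\rVert_{L^{2}}\lVert\Delta u\rVert_{L^{2}}$, i.e.\ a factor $Y^{\frac{1}{2}}Z^{\frac{1}{2}}$ with the $\nabla_{1,2}$-dissipation $Y$ retained; this is what lets the Stage 1 output $\sup W+\int Y$ be fed in and absorbed (see (44) and the treatment of the term $W^{\frac{1}{2}}Y^{\frac{1}{2}}Z^{\frac{1}{2}}$ after (68)). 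Second, the pressure needs no Calder\'on--Zygmund theory in either stage: testing against $\Delta_{1,2}u$ or $\Delta u$ and integrating by parts, $\int\nabla\pi\cdot\Delta u=-\int\pi\,\Delta(\nabla\cdot u)=0$ by incompressibility, so your Stage 1 remark is correct and your Stage 2 detour through $-\Delta\pi=\sum_{i,j}\partial_{i}\partial_{j}(u_{i}u_{j}-b_{i}b_{j})$ is unnecessary (and would reintroduce all components of $u$, undermining the component reduction).
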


\begin{theorem}
Let $N = 4$ and $u$ in the regularity class of (11) be the solution to the NSE (1a)-(1b) for a given $u_{0} \in H^{s}(\mathbb{R}^{4}), s > 4$. Suppose $\partial_{3} \pi, \partial_{4}\pi$ with their corresponding $p_{i}, r_{i}, i = 3, 4$ satisfy the following roles of $f$: 
\begin{equation}
\int_{0}^{T} \lVert f\rVert_{L^{p_{i}}}^{r_{i}} d\tau \leq c, \hspace{3mm} \frac{4}{p_{i}} + \frac{2}{r_{i}} < \frac{8}{3}, \hspace{3mm} \frac{12}{7} < p_{i} < 6.
\end{equation}
Then $u$ remains in the same regularity class (11) on $[0, T']$ for some $T'> T$. 
\end{theorem}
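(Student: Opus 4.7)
The plan is to derive an $L^{q}$ bound on the pair $(u_{3}, u_{4})$ alone---exploiting that we control exactly two pressure derivatives---and then reduce to Theorem 1.1. Taking the $L^{2}$ inner product of (1a) with the test vector $(0,0,u_{3}|u_{3}|^{q-2}, u_{4}|u_{4}|^{q-2})$ and integrating by parts produces
\begin{equation*}
\frac{1}{q}\frac{d}{dt}\sum_{i=3,4}\lVert u_{i}\rVert_{L^{q}}^{q} + \nu(q-1)\sum_{i=3,4}\int |\nabla u_{i}|^{2}|u_{i}|^{q-2}\,dx = -\sum_{i=3,4}\int \partial_{i}\pi\cdot u_{i}|u_{i}|^{q-2}\,dx.
\end{equation*}
Two structural features make this the right estimate. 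First, the convection contribution $\sum_{i=3,4}\int (u\cdot\nabla)u_{i}\cdot u_{i}|u_{i}|^{q-2}\,dx = \frac{1}{q}\sum_{i=3,4}\int (u\cdot\nabla)|u_{i}|^{q}\,dx$ vanishes by $\nabla\cdot u = 0$. Second, the only pressure derivatives that appear are $\partial_{3}\pi$ and $\partial_{4}\pi$, precisely those we control.

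For the right-hand side I would apply H\"older to bound it by $\sum_{i=3,4}\lVert \partial_{i}\pi\rVert_{L^{p}}\lVert u_{i}\rVert_{L^{(q-1)p'}}^{q-1}$ with $p'=p/(p-1)$, and interpolate $\lVert u_{i}\rVert_{L^{(q-1)p'}}$ between $L^{q}$ and $L^{2q}$ by Gagliardo--Nirenberg. The $L^{2q}$ factor is controlled via the four-dimensional Sobolev embedding
\begin{equation*}
\lVert u_{i}\rVert_{L^{2q}}^{q} \leq C\lVert \nabla|u_{i}|^{q/2}\rVert_{L^{2}}^{2}\leq C\int |\nabla u_{i}|^{2}|u_{i}|^{q-2}\,dx,
\end{equation*}
so that Young's inequality absorbs the corresponding power into the dissipative term on the left. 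The interpolation parameter $\theta = 2-\frac{2q}{(q-1)p'}$ lies in $[0,1]$ exactly when $p\in\big[\frac{2q}{q+1}, q\big]$, and the resulting Gr\"onwall bound closes whenever $(p,r)$ satisfies the Serrin-type relation $\frac{4}{p}+\frac{2}{r} = 2+\frac{4}{q}$. Setting $q = 6$ reproduces precisely $\frac{4}{p}+\frac{2}{r}=\frac{8}{3}$ with $p\in\big[\frac{12}{7},6\big]$, which is exactly the borderline of (17).

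Because (17) is assumed with \emph{strict} inequality and $p$ lies strictly between $\frac{12}{7}$ and $6$, I can repeat the argument with $q = 6+\epsilon$ for some small $\epsilon>0$: the admissible range $\big(\frac{2q}{q+1},q\big)$ still contains $p$, and $\frac{4}{p}+\frac{2}{r}\leq 2+\frac{4}{q}$ is still implied by the hypothesis. Gr\"onwall then yields $u_{3}, u_{4}\in L^{\infty}(0,T;L^{6+\epsilon}(\mathbb{R}^{4}))$, which is admissible data for Theorem 1.1 with $p_{i}=6+\epsilon$ and $r_{i}=\infty$ (one checks $\frac{4}{6+\epsilon}<\frac{1}{6+\epsilon}+\frac{1}{2}$ for every $\epsilon>0$), so that Theorem 1.1 promotes $u$ to the regularity class (11) on $[0,T']$.

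The main obstacle I anticipate is the arithmetic in the second step: the Gagliardo--Nirenberg/Young balance must deliver \emph{exactly} the scaling $\frac{4}{p}+\frac{2}{r}\leq \frac{8}{3}$ on the open interval $(\frac{12}{7},6)$, and losing either the scaling or an endpoint of the $p$-range would leave a gap between what the $L^{q}$ estimate produces and what Theorem 1.1 needs as input. Beyond this, the only delicate point is extracting a uniform $\epsilon>0$ from the strict inequality in (17) so that the intermediate $q=6+\epsilon$ step is legitimate for every admissible $p$; after that, everything reduces to the routine energy machinery already developed for Theorem 1.1.
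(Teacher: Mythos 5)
Your proposal is correct and follows essentially the same route as the paper: testing the $i$-th equations ($i=3,4$) with $|u_{i}|^{q-2}u_{i}$, killing the convection term by incompressibility, combining H\"older, interpolation between $L^{q}$ and $L^{2q}$, the four-dimensional Sobolev bound $\lVert u_{i}\rVert_{L^{2q}}^{q}\lesssim \int |\nabla u_{i}|^{2}|u_{i}|^{q-2}$, and Young to close a Gr\"onwall estimate, then exploiting the strict inequality in (17) to run the argument at $q=6+\epsilon$ and conclude via Theorem 1.1 with $r_{i}=\infty$, $p_{i}=6+\epsilon$. Your scaling relation $\frac{4}{p}+\frac{2}{r}\leq 2+\frac{4}{q}$ and admissibility window $p\in[\frac{2q}{q+1},q]$ match the paper's exponents exactly.
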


\begin{remark}
\begin{enumerate}
\item In comparison of Theorem 1.1 with (4), Theorem 1.2 with (5), Theorem 1.5 with (6), we may consider the results of this manuscript as component reduction of many previous work. Moreover, in comparison of Theorems 1.1 and 1.2 with (7), Theorem 1.3 with (8), we may consider the results of this manuscript as four-dimension extension of many previous work in three-dimension. 
\item The Lemma 2.3 of [20] has found much applications, e.g. in the study on the anisotropic NSE (e.g. [35]). We note that our Proposition 3.1 can be readily generalized further to any $\mathbb{R}^{N}, N \geq 3$; we chose to state the case $N =4$ for the simplicity of presentation. 
\item In [32], the author showed that  for dimensions $N = 3, 4, 5$, $N$-many component regularity criteria may be reduced to $(N-1)$ many components for the generalized MHD system following the method in [28]; the results in [32] and this manuscript do not cover each other. In [31] the author also obtained a regularity criteria of $N$-dimensional porous media equation governed by Darcy's law in terms of one partial derivative of the scalar-valued solution. The method in [31] cannot be applied to  (1a)-(1b), (2a)-(2c). 
\end{enumerate}
\end{remark}

In the Preliminaries section, we set up notations and state key facts. Local theory is well-known (cf. [23]); hence, by the standard argument of continuation of local theory, we only need to obtain $H^{s}$-bounds. We present the proofs of Theorems 1.3, 1.4 and 1.5. Because the NSE is the MHD system at $b \equiv 0$, the proofs of Theorem 1.3 and 1.4 immediately deduce Theorems 1.1 and 1.2 respectively. Thereafter, we conclude with a brief further discussion. 

\section{Preliminaries}

Throughout the rest of the manuscript, we shall assume $\nu, \eta = 1$ for simplicity. For brevity, we write $\int f$ for $\int_{\mathbb{R}^{N}} f(x) dx$ and $A \lesssim_{a,b} B$ when there exists a constant $c \geq 0$ of significant dependence only on $a, b$ such that $A \leq c B$, similarly $A \approx_{a,b} B$ in case $A = cB$. We denote the fractional Laplacian operator $\Lambda^{s} \triangleq (-\Delta)^{\frac{s}{2}}$ and 
\begin{align*}
&W(t) \triangleq (\lVert \nabla_{1,2} u\rVert_{L^{2}}^{2} + \lVert \nabla_{1,2}b\rVert_{L^{2}}^{2})(t), \hspace{8mm} X(t) \triangleq (\lVert \nabla u\rVert_{L^{2}}^{2} + \lVert \nabla b\rVert_{L^{2}}^{2})(t),\\
&Y(t) \triangleq (\lVert \nabla\nabla_{1,2} u\rVert_{L^{2}}^{2} + \lVert \nabla\nabla_{1,2} b\rVert_{L^{2}}^{2})(t) , \hspace{3mm} Z(t) \triangleq (\lVert \Delta u\rVert_{L^{2}}^{2} + \lVert \Delta b\rVert_{L^{2}}^{2})(t).
\end{align*}
The following is a special case of Troisi's inequality (cf. [13]). The proof of the case $N = 3$ in the Appendix of [5] can be readily generalized to the case $N = 4$: 

\begin{lemma}
Let $f \in C_{0}^{\infty} (\mathbb{R}^{4})$. Then 
\begin{equation}
\lVert f\rVert_{L^{4}} \lesssim \lVert \partial_{1} f\rVert_{L^{2}}^{\frac{1}{4}} \lVert \partial_{2} f\rVert_{L^{2}}^{\frac{1}{4}} \lVert \partial_{3} f\rVert_{L^{2}}^{\frac{1}{4}} \lVert \partial_{4} f\rVert_{L^{2}}^{\frac{1}{4}}.
\end{equation}
\end{lemma}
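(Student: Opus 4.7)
The plan is to prove Lemma 2.1 via the classical Nirenberg--Gagliardo--Troisi argument, specialized to dimension four.

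First, I would establish the $L^{1}$-prototype
$$\lVert g\rVert_{L^{4/3}(\mathbb{R}^{4})} \lesssim \prod_{i=1}^{4} \lVert \partial_{i} g\rVert_{L^{1}(\mathbb{R}^{4})}^{1/4}, \qquad g \in C_{0}^{\infty}(\mathbb{R}^{4}).$$
Writing $\hat{x}_{i}$ for the triple of variables other than $x_{i}$, the fundamental theorem of calculus in direction $i$ gives
$$|g(x)| \leq \int_{\mathbb{R}} |\partial_{i} g|(\ldots, s, \ldots)\,ds =: h_{i}(\hat{x}_{i}),$$
so multiplying the four resulting bounds and taking the $1/3$-power yields $|g(x)|^{4/3} \leq \prod_{i=1}^{4} h_{i}(\hat{x}_{i})^{1/3}$. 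The Loomis--Whitney inequality in four variables then produces
$$\int_{\mathbb{R}^{4}} \prod_{i=1}^{4} h_{i}(\hat{x}_{i})^{1/3}\,dx \leq \prod_{i=1}^{4} \Bigl(\int_{\mathbb{R}^{3}} h_{i}\,d\hat{x}_{i}\Bigr)^{1/3} = \prod_{i=1}^{4} \lVert \partial_{i} g\rVert_{L^{1}}^{1/3},$$
and raising to the power $3/4$ completes the prototype.

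Next, I would apply this prototype with $g = |f|^{3}$, chosen so that the output becomes self-improving in $\lVert f\rVert_{L^{4}}$. The left-hand side equals $\lVert |f|^{3}\rVert_{L^{4/3}} = \lVert f\rVert_{L^{4}}^{3}$, while for each $i$, the chain rule combined with Cauchy--Schwarz gives
$$\lVert \partial_{i} |f|^{3}\rVert_{L^{1}} = 3\int |f|^{2} |\partial_{i} f|\,dx \leq 3 \lVert f\rVert_{L^{4}}^{2} \lVert \partial_{i} f\rVert_{L^{2}}.$$
Substituting and collecting constants yields
$$\lVert f\rVert_{L^{4}}^{3} \lesssim \lVert f\rVert_{L^{4}}^{2} \prod_{i=1}^{4} \lVert \partial_{i} f\rVert_{L^{2}}^{1/4},$$
and dividing by the finite quantity $\lVert f\rVert_{L^{4}}^{2}$ delivers the lemma.

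Two small caveats: since $|f|^{3}$ is not $C^{1}$ at the zeros of $f$, one should either mollify $f$ or work with $(f^{2} + \varepsilon)^{3/2}$ and pass $\varepsilon \downarrow 0$, which is standard; and the Loomis--Whitney step, though the only step of genuine substance, is itself a classical consequence of iterating Hölder and Fubini. The matching of exponents throughout is automatic from the scaling of $\dot{H}^{1}(\mathbb{R}^{4}) \hookrightarrow L^{4}(\mathbb{R}^{4})$ highlighted in the Introduction.
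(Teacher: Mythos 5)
Your proof is correct and is essentially the argument the paper relies on: the paper gives no proof of Lemma 2.1 itself, but cites the Appendix of [5], whose $N=3$ argument is exactly this classical Troisi/Gagliardo--Nirenberg scheme (fundamental theorem of calculus in each direction, iterated H\"older/Loomis--Whitney to get the $L^{1}$-gradient prototype, then substitution of a power of $f$ closed by Cauchy--Schwarz), so your four-dimensional write-up with $g=\lvert f\rvert^{3}$ is precisely the ``ready generalization'' the paper alludes to. Your regularity caveat is harmless but not really needed, since $\lvert f\rvert^{3}$ is $C^{1}$ with compact support and the prototype only requires that much smoothness.
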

We will use the following elementary inequality frequently: 
\begin{equation}
(a+b)^{p} \leq 2^{p}(a^{p} + b^{p}), \hspace{3mm} \text{for } 0 \leq p < \infty \text{ and } a, b \geq 0.
\end{equation}

We will also use the following commutator estimate to prove another lemma concerning higher regularity: 
\begin{lemma}
(cf. [19]) Let $f,g$ be smooth such that $\nabla f \in L^{p_{1}}, \Lambda^{s-1}g \in L^{p_{2}}, \Lambda^{s}f \in L^{p_{3}}, g \in L^{p_{4}}, p \in (1,\infty), \frac{1}{p} = \frac{1}{p_{1}}+\frac{1}{p_{2}} = \frac{1}{p_{3}} + \frac{1}{p_{4}}, p_{2}, p_{3} \in (1, \infty), s > 0.$ Then 
\begin{equation*}
\lVert \Lambda^{s}(fg) - f\Lambda^{s}g\rVert_{L^{p}} \lesssim (\lVert \nabla f\rVert_{L^{p_{1}}}\lVert \Lambda^{s-1}g\rVert_{L^{p_{2}}} + \lVert \Lambda^{s}f\rVert_{L^{p_{3}}}\lVert g\rVert_{L^{p_{4}}}).
\end{equation*}
\end{lemma}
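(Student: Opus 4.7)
The plan is to prove this standard Kato–Ponce commutator estimate via Littlewood–Paley analysis and Bony's paraproduct decomposition. First I would fix a homogeneous dyadic frequency decomposition with projections $\Delta_j$ and partial sums $S_j = \sum_{k \leq j-1}\Delta_k$, and write
\[
fg = T_f g + T_g f + R(f,g),
\]
where $T_f g = \sum_j S_{j-2}f \cdot \Delta_j g$ is the low–high paraproduct and $R(f,g) = \sum_{|j-k|\leq 2} \Delta_j f \cdot \Delta_k g$ is the resonant remainder. The commutator $\Lambda^s(fg) - f\Lambda^s g$ then decomposes correspondingly into three contributions, which I would estimate separately.

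For the low–high piece $[\Lambda^s, T_f]g = \sum_j [\Lambda^s, S_{j-2}f]\Delta_j g$, each summand has Fourier support in an annulus $|\xi|\sim 2^j$, so the symbol $|\xi|^s$ is smooth on the support and a first-order Taylor expansion of $|\xi+\eta|^s - |\eta|^s$ in the small frequency variable $\xi$ (with $|\xi| \ll |\eta|$) yields the pointwise bound
\[
\bigl\lvert [\Lambda^s, S_{j-2}f]\Delta_j g(x)\bigr\rvert \lesssim \mathcal{M}(\nabla S_{j-2}f)(x)\,\mathcal{M}(\Lambda^{s-1}\Delta_j g)(x),
\]
where $\mathcal{M}$ is the Hardy–Littlewood maximal operator. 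Summing in $j$ via Cauchy–Schwarz, the Fefferman–Stein vector-valued maximal inequality, and the Littlewood–Paley square function characterization of $L^{p_2}$ (which forces $p_2, p \in (1,\infty)$) delivers $\lVert \nabla f\rVert_{L^{p_1}}\lVert \Lambda^{s-1}g\rVert_{L^{p_2}}$. For the high–low piece $[\Lambda^s, T_g]f$ and the resonant piece, $\Lambda^s$ effectively acts on the high-frequency factor $\Delta_j f$, and a parallel argument with the square function applied to $\Lambda^s f$ produces $\lVert \Lambda^s f\rVert_{L^{p_3}}\lVert g\rVert_{L^{p_4}}$ and accounts for the restriction $p_3 \in (1,\infty)$. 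The exponents $p_1, p_4$ may include $\infty$ because they appear only through $\lVert \mathcal{M}h\rVert_{L^\infty} \leq \lVert h\rVert_{L^\infty}$.

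The main technical obstacle is the symbol analysis underlying the pointwise bound on the low–high piece: one must verify that the convolution kernel of the operator with normalized symbol $2^{-j}(|\xi+\eta|^s - |\eta|^s)/\xi$ (restricted to $|\eta|\sim 2^j$, $|\xi|\lesssim 2^{j-2}$) has an $L^1$ norm uniform in $j$, so that this operator is dominated by the Hardy–Littlewood maximal function as asserted. An alternative route that avoids the pointwise estimate is to recognize the commutator symbol, after the frequency localization, as a Coifman–Meyer bilinear multiplier of degree $s$ and invoke the Coifman–Meyer multiplier theorem directly; this approach more transparently handles the endpoint cases $p_1 = \infty$ or $p_4 = \infty$ but relies on a substantially deeper black-box input.
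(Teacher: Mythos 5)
The paper does not prove this lemma at all: it is imported as a known result, cited to Kato--Ponce [19], and used as a black box in the proof of Lemma 2.3. So there is no internal argument to compare against; the comparison is with the literature. Your sketch is the standard modern paraproduct proof, which is itself a different route from the original one in [19] (that argument ran through Coifman--Meyer theory for bilinear/pseudo-differential operators plus interpolation, not an explicit Bony decomposition). Your architecture is the right one: the only piece where commutator cancellation is genuinely needed is the low--high part $\Lambda^{s}T_{f}g - T_{f}\Lambda^{s}g$, where the first-order symbol expansion produces $\nabla f$ against $\Lambda^{s-1}g$; the remaining pieces are estimated term by term and yield $\lVert \Lambda^{s}f\rVert_{L^{p_{3}}}\lVert g\rVert_{L^{p_{4}}}$. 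Your bookkeeping of exponents is also consistent with the hypotheses: square functions and Fefferman--Stein force $p, p_{2}, p_{3} \in (1,\infty)$, while $p_{1}, p_{4}$ enter only through maximal functions (and note $p_{1}, p_{4} > p > 1$ is automatic from the H\"older relations, so the maximal operator is never used near $L^{1}$).

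Two places where the sketch is loose, neither fatal. First, the middle piece is not literally $[\Lambda^{s}, T_{g}]f$: writing $f\Lambda^{s}g$ in Bony form, what must be bounded is $\Lambda^{s}(T_{g}f) - T_{\Lambda^{s}g}f$ together with $\Lambda^{s}R(f,g) - R(f,\Lambda^{s}g)$, where in the subtracted terms the fractional derivative sits on the low-frequency (or comparable-frequency) factor $g$. No cancellation is needed there, but one must transfer $s$ derivatives from $g$ onto $f$, and this is exactly where the hypothesis $s>0$ is consumed: bounds of the type $\lvert S_{j-2}\Lambda^{s}g\rvert \lesssim 2^{js}\mathcal{M}g$ rest on summing $\sum_{k \leq j}2^{ks}$, which converges only for $s>0$, and the same issue reappears in summing the resonant piece. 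Since $s>0$ is an essential hypothesis of the lemma, a complete write-up must make this step explicit. Second, as you yourself flag, the pointwise domination by products of maximal functions requires the uniform-in-$j$ bilinear kernel estimate obtained by rescaling the localized symbol and checking its derivative bounds; this is routine but carries the real analytic content of the low--high estimate. With those two points supplied, your outline closes into a correct proof.
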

An immediate application of Lemma 2.2 gives the following result:
\begin{lemma}
Let $(u,b)$ be the solution to the MHD system (2a)-(2c) in $[0,T]$ with $u_{0}, b_{0} \in H^{s}(\mathbb{R}^{N}), N \geq 3, s > 2 + \frac{N}{2}$. Then if $\int_{0}^{T} \lVert \nabla u\rVert_{L^{N}}^{2} + \lVert \nabla b\rVert_{L^{N}}^{2} d\tau \lesssim 1$, then 
\begin{equation*}
\sup_{t\in [0,T]} (\lVert \Lambda^{s}u\rVert_{L^{2}}^{2} + \lVert \Lambda^{s} b\rVert_{L^{2}}^{2})(t) + \int_{0}^{T} \lVert \Lambda^{s}\nabla  u\rVert_{L^{2}}^{2} + \lVert \Lambda^{s} \nabla b\rVert_{L^{2}}^{2}d\tau \lesssim 1. 
\end{equation*}
\end{lemma}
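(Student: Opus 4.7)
The plan is a standard $H^s$ energy estimate on the MHD system, closed by Gronwall against the assumed $L^2_t L^N_x$ control of $\nabla u$ and $\nabla b$. First I apply $\Lambda^s$ to (2a) and (2b), pair in $L^2$ with $\Lambda^s u$ and $\Lambda^s b$ respectively, and add. The pressure term drops out after integrating by parts using $\nabla\cdot u = 0$. I then split every nonlinearity via the commutator decomposition $\Lambda^s[(v\cdot\nabla)w] = [\Lambda^s, v\cdot\nabla]w + (v\cdot\nabla)\Lambda^s w$. The two $u$-transport pieces vanish identically by $\nabla\cdot u = 0$, while the two $b$-coupling integrals $\int (b\cdot\nabla)\Lambda^s b\cdot\Lambda^s u\,dx$ and $\int (b\cdot\nabla)\Lambda^s u\cdot\Lambda^s b\,dx$ cancel against each other after one integration by parts using $\nabla\cdot b = 0$. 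Only four commutator terms therefore remain on the right-hand side, against the dissipation $\|\Lambda^s\nabla u\|_{L^2}^2 + \|\Lambda^s\nabla b\|_{L^2}^2$ on the left.

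To each commutator I apply Lemma 2.2 to a scalar piece $v_i\partial_i w_j$ with the choice $p=2$, $p_1 = p_4 = N$, and $p_2 = p_3 = \frac{2N}{N-2}$, which is admissible for every $N\geq 3$. Using the equivalence $\|\Lambda^{s-1}\nabla\cdot\|_{L^p}\approx\|\Lambda^s\cdot\|_{L^p}$ for $p\in(1,\infty)$, this yields, schematically,
\begin{equation*}
\|[\Lambda^s, v\cdot\nabla]w\|_{L^2} \lesssim \|\nabla v\|_{L^N}\|\Lambda^s w\|_{L^{\frac{2N}{N-2}}} + \|\Lambda^s v\|_{L^{\frac{2N}{N-2}}}\|\nabla w\|_{L^N}.
\end{equation*}
The Sobolev embedding $\dot H^1(\mathbb{R}^N)\hookrightarrow L^{2N/(N-2)}(\mathbb{R}^N)$ converts each $\|\Lambda^s\cdot\|_{L^{2N/(N-2)}}$ factor into $\|\Lambda^s\nabla\cdot\|_{L^2}$. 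Pairing each such bound in $L^2$ against $\Lambda^s u$ or $\Lambda^s b$ by Cauchy--Schwarz and splitting with Young's inequality produces a contribution of the form
\begin{equation*}
\tfrac{1}{8}\bigl(\|\Lambda^s\nabla u\|_{L^2}^2 + \|\Lambda^s\nabla b\|_{L^2}^2\bigr) + C\bigl(\|\nabla u\|_{L^N}^2 + \|\nabla b\|_{L^N}^2\bigr)\bigl(\|\Lambda^s u\|_{L^2}^2 + \|\Lambda^s b\|_{L^2}^2\bigr),
\end{equation*}
and the quadratic dissipation piece is absorbed into the left-hand side.

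After absorption I arrive at
\begin{equation*}
\frac{d}{dt}\bigl(\|\Lambda^s u\|_{L^2}^2 + \|\Lambda^s b\|_{L^2}^2\bigr) + \tfrac{1}{2}\bigl(\|\Lambda^s\nabla u\|_{L^2}^2 + \|\Lambda^s\nabla b\|_{L^2}^2\bigr) \lesssim \bigl(\|\nabla u\|_{L^N}^2 + \|\nabla b\|_{L^N}^2\bigr)\bigl(\|\Lambda^s u\|_{L^2}^2 + \|\Lambda^s b\|_{L^2}^2\bigr),
\end{equation*}
and Gronwall's inequality together with the hypothesis $\int_0^T \|\nabla u\|_{L^N}^2 + \|\nabla b\|_{L^N}^2\,d\tau \lesssim 1$ closes both the $L^\infty_t$ and the $L^2_t$ parts of the conclusion simultaneously. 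The only delicate step is the choice of commutator exponents, which must force $\nabla u$ and $\nabla b$ into exactly the available $L^N$ norm while sending $\Lambda^s u, \Lambda^s b$ into a Sobolev-accessible $L^{2N/(N-2)}$; the hypothesis $s > 2 + N/2$ is comfortably above the minimum needed to justify Lemma 2.2 and the integrations by parts, and is not used quantitatively.
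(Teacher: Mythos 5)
Your proposal is correct and follows essentially the same route as the paper's proof: the same commutator decomposition (with the transport terms vanishing by $\nabla\cdot u=0$ and the two magnetic coupling terms cancelling via $\nabla\cdot b=0$), the same application of Lemma 2.2 with exponents $(N,\tfrac{2N}{N-2})$, the same Sobolev embedding $\dot H^{1}\hookrightarrow L^{\frac{2N}{N-2}}$, followed by Young's inequality, absorption, and Gronwall. The only difference is that you spell out the cancellation structure and the commutator exponents explicitly, whereas the paper compresses these steps into a sketch.
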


\begin{proof}

This is a standard computation; we sketch it for completeness. We apply $\Lambda^{s}$ on (2a)-(2b), take $L^{2}$-inner products with $\Lambda^{s}u, \Lambda^{s} b$ respectively to obtain
\begin{align*}
& \frac{1}{2} \partial_{t} (\lVert \Lambda^{s} u\rVert_{L^{2}}^{2} + \lVert \Lambda^{s} b\rVert_{L^{2}}^{2}) + \lVert \Lambda^{s} \nabla u\rVert_{L^{2}}^{2} + \lVert \Lambda^{s} \nabla b\rVert_{L^{2}}^{2}\\
=& -\int [\Lambda^{s} ((u\cdot\nabla) u) - u\cdot\nabla \Lambda^{s} u]\cdot\Lambda^{s} u - \int [\Lambda^{s} ((u\cdot\nabla) b) - u\cdot\nabla \Lambda^{s} b] \cdot\Lambda^{s} b\\
&+ \int [\Lambda^{s} ((b\cdot\nabla) b) - b\cdot\nabla \Lambda^{s} b]\cdot\Lambda^{s} u + \int [\Lambda^{s} ((b\cdot\nabla) u) - b\cdot\nabla \Lambda^{s} u] \cdot\Lambda^{s} b\\
\lesssim &(\lVert \nabla u\rVert_{L^{N}} + \lVert \nabla b\rVert_{L^{N}})(\lVert \Lambda^{s} u\rVert_{L^{2}} + \lVert \Lambda^{s} b\rVert_{L^{2}}) (\lVert \Lambda^{s} \nabla u\rVert_{L^{2}} + \lVert \Lambda^{s} \nabla b\rVert_{L^{2}})\\
\leq& \frac{1}{2}(\lVert \Lambda^{s} \nabla u\rVert_{L^{2}}^{2} + \lVert \Lambda^{s} \nabla b\rVert_{L^{2}}^{2}) + c(\lVert \nabla u\rVert_{L^{N}}^{2} + \lVert \nabla b\rVert_{L^{N}}^{2})(\lVert \Lambda^{s} u\rVert_{L^{2}}^{2} + \lVert \Lambda^{s} b\rVert_{L^{2}}^{2})
\end{align*}
by H$\ddot{o}$lder's inequalities, Lemma 2.2, Sobolev embedding of $\dot{H}^{1}(\mathbb{R}^{N}) \hookrightarrow L^{\frac{2N}{N-2}}(\mathbb{R}^{N})$, Young's inequalities and (19). Thus, after absorbing, Gronwall's inequality completes the proof of Lemma 2.3.
\end{proof}

Due to Lemma 2.3,  the proof of our theorems are complete once we obtain $H^{1}$-bound. 

\section{Proof of Theorem 1.3}

\subsection{$\lVert \nabla_{1,2} u\rVert_{L^{2}}^{2} + \lVert \nabla_{1,2} b\rVert_{L^{2}}^{2}$-estimate}

We first prove an important decomposition which we present as a proposition:
\begin{proposition}
Let $N = 4$ and $(u,b)$ be the solution pair to the MHD system (2a)-(2c). Then 
\begin{align}
&\int (u\cdot\nabla) u \cdot \Delta_{1,2} u +  (u\cdot\nabla) b \cdot \Delta_{1,2} b -  (b\cdot\nabla) b \cdot \Delta_{1,2} u - (b\cdot\nabla) u \cdot \Delta_{1,2} b\nonumber \\
\lesssim& \int (\lvert u_{3} \rvert + \lvert u_{4}\rvert) \lvert \nabla u\rvert \lvert \nabla\nabla_{1,2} u\rvert + \lvert b\rvert (\lvert \nabla u\rvert + \lvert \nabla b\rvert)(\lvert \nabla\nabla_{1,2} u\rvert + \lvert \nabla\nabla_{1,2} b\rvert).
\end{align}
Moreover, 
\begin{align}
&\int (u\cdot\nabla) u \cdot \Delta_{1,2} u + (u\cdot\nabla) b \cdot \Delta_{1,2} b - (b\cdot\nabla) b \cdot \Delta_{1,2} u -  (b\cdot\nabla) u \cdot \Delta_{1,2} b \nonumber \\
\lesssim& \int (\lvert \nabla u_{3} \rvert + \lvert \nabla u_{4} \rvert) \lvert \nabla_{1,2} u\rvert \lvert \nabla u\rvert + \lvert \nabla b\rvert \lvert \nabla_{1,2} b\rvert \lvert \nabla u\rvert.
\end{align}
\end{proposition}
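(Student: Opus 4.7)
The plan is to prove both inequalities through a sequence of integrations by parts that exploits the splitting $u \cdot \nabla = \sum_{i=1}^{2} u_{i} \partial_{i} + \sum_{i=3}^{4} u_{i} \partial_{i}$ (and the analogous one for $b \cdot \nabla$) together with the divergence-free conditions $\nabla \cdot u = \nabla \cdot b = 0$. A useful observation throughout is that, for $k \in \{1,2\}$, the second derivative $\partial_{kk}^{2} u_{j}$ already belongs to $\nabla \nabla_{1,2} u$, so any time the integrand has a free $\partial_{kk}^{2}$ acting on $u$ or $b$, the corresponding $|\nabla \nabla_{1,2} \cdot|$-factor in (20) comes for free.

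For the pure Navier--Stokes integral $\int (u\cdot\nabla) u \cdot \Delta_{1,2} u$, I would split $i$ into $\{3,4\}$ and $\{1,2\}$. The $i \in \{3,4\}$ part admits the pointwise bound $(|u_{3}| + |u_{4}|)|\nabla u||\nabla \nabla_{1,2}u|$ with no integration by parts, while one $\partial_{k}$-integration by parts rewrites it as $(|\nabla u_{3}|+|\nabla u_{4}|)|\nabla_{1,2}u||\nabla u|$ for (21). For the $i \in \{1,2\}$ part, I would integrate by parts in $x_{i}$ on the factor $\partial_{i} u_{j}$ and then apply $\sum_{i=1}^{2}\partial_{i}u_{i} = -(\partial_{3}u_{3}+\partial_{4}u_{4})$ to produce a factor $\partial_{3}u_{3} + \partial_{4}u_{4}$, which is already of the form required by (21). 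For (20), a further integration by parts in $x_{3}$ or $x_{4}$ trades $\partial_{l} u_{l}$ for $u_{l}$; wherever this produces a third derivative of $u$, one more $\partial_{k}$-integration by parts ($k \in \{1,2\}$) brings it inside $\nabla \nabla_{1,2} u$.

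For the three MHD cross integrals, I would handle $I_{3} + I_{4}$ together. Integrating by parts in $x_{k}$ in each and using the identity $\sum_{i,j,k}\int b_{i}\partial_{i}(\partial_{k}u_{j}\partial_{k}b_{j}) = 0$ (a consequence of $\nabla \cdot b = 0$) collapses $I_{3} + I_{4}$ to $\sum_{i,j,k}\int \partial_{k}b_{i}(\partial_{i}b_{j}\partial_{k}u_{j} + \partial_{i}u_{j}\partial_{k}b_{j})$. A parallel manipulation reduces $I_{2}$ to $\sum_{i,j,k}\int \partial_{k}u_{i} b_{j}\partial_{i}\partial_{k}b_{j}$, the residual vanishing by $\sum_{i}\partial_{i}\partial_{k}u_{i} = 0$. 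For (21) these are already three-first-derivative expressions bounded by $|\nabla b||\nabla_{1,2}b||\nabla u|$. For (20), one more $x_{i}$- or $x_{k}$-integration by parts on each $b$-factor (with $\sum_{i}\partial_{i}b_{i} = 0$ used to discard the residual) produces $|b|$ times a second derivative of $u$ or $b$, matching the admissible shape $|b|(|\nabla u|+|\nabla b|)(|\nabla \nabla_{1,2}u|+|\nabla \nabla_{1,2}b|)$.

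The hard part will be combinatorial bookkeeping. At several stages a direct IBP yields a spurious term carrying a factor of $|u|$ (the full velocity, rather than just $u_{3}, u_{4}$) or a third derivative of $u$ or $b$, neither of which is admissible on the right-hand side. Each such term must be removed by a carefully chosen further integration by parts in the complementary directions together with another appeal to a divergence-free identity, and the proof will be complete once every surviving term has been matched to one of the permitted shapes in (20) and (21).
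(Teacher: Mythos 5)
Your handling of the MHD cross integrals and of the $i\in\{3,4\}$ part of $\int(u\cdot\nabla)u\cdot\Delta_{1,2}u$ is correct and matches the paper's computations ((22)--(23), (30), (37)): there every term either keeps an undifferentiated $u_{3}$, $u_{4}$ or $b$, or is trilinear in first derivatives with a $\nabla_{1,2}$-factor, and your identities $\sum_{i}\partial_{i}\partial_{k}u_{i}=0$, $\nabla\cdot b=0$ are used legitimately. The genuine gap is in the $i\in\{1,2\}$ part of the pure Navier--Stokes term. After your integration by parts in $x_{i}$ and the substitution $\sum_{i=1}^{2}\partial_{i}u_{i}=-(\partial_{3}u_{3}+\partial_{4}u_{4})$, what survives is
\begin{equation*}
\sum_{j,k}\int (\partial_{3}u_{3}+\partial_{4}u_{4})\,u_{j}\,\partial_{kk}^{2}u_{j}
\;-\;\sum_{i=1}^{2}\sum_{j,k}\int u_{i}\,u_{j}\,\partial_{i}\partial_{kk}^{2}u_{j},
\end{equation*}
and neither term is of the form required by (21): the first carries an undifferentiated full velocity component $u_{j}$ (all $j=1,\dots,4$, not just $j=3,4$) against a second derivative, and the second carries a third derivative. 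So your claim that this part is ``already of the form required by (21)'' is false as stated. Worse, if you try to repair these terms by the further integrations by parts your last paragraph proposes, the inadmissible pieces cancel against each other and you collapse back to
\begin{equation*}
-\tfrac{1}{2}\sum_{j,k}\int(\partial_{3}u_{3}+\partial_{4}u_{4})(\partial_{k}u_{j})^{2}
\;-\;\sum_{i,k=1}^{2}\sum_{j=1}^{4}\int\partial_{k}u_{i}\,\partial_{i}u_{j}\,\partial_{k}u_{j},
\end{equation*}
whose $j\in\{1,2\}$ portion, the cubic form $\sum_{i,j,k=1}^{2}\int\partial_{k}u_{i}\partial_{i}u_{j}\partial_{k}u_{j}$ in the entries of $\nabla_{1,2}(u_{1},u_{2})$, is the actual crux of the proposition: all three factors are $\nabla_{1,2}$-derivatives of $u_{1},u_{2}$, so it admits no pointwise bound by either right-hand side, and no single application of the divergence identity can help, since that identity requires $\partial_{1}u_{1}$ and $\partial_{2}u_{2}$ to appear with identical cofactors.

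The paper disposes of exactly this core by expanding it into the eight explicit terms of its (24) and pairing them as $I_{1}+I_{8}$, $I_{2}+I_{6}$, $I_{3}+I_{7}$, $I_{4}+I_{5}$: each pair has a common quadratic cofactor (e.g.\ $(\partial_{2}u_{1})^{2}$, $\partial_{2}u_{1}\partial_{1}u_{2}$) multiplying $\partial_{1}u_{1}+\partial_{2}u_{2}$, so incompressibility produces the factor $\partial_{3}u_{3}+\partial_{4}u_{4}$, with incompressibility invoked twice for the pair $I_{1}+I_{8}$ in (25)--(26). (Equivalently, one may invoke Cayley--Hamilton, $M^{2}=\mathrm{tr}(M)M-\det(M)I$, for the $2\times2$ matrix $M_{ij}=\partial_{i}u_{j}$, $i,j\in\{1,2\}$.) This specific algebraic grouping, not a generic ``further integration by parts together with another appeal to a divergence-free identity,'' is what makes both (20) and (21) true; your proposal never identifies the core term and defers precisely this step to ``combinatorial bookkeeping,'' so the proof does not close as written.
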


\begin{proof}
We write components-wise and integrate by parts to obtain 
\begin{align}
&\int (u\cdot\nabla) u \cdot\Delta_{1,2} u\\
=& -\sum_{i,j=1}^{4}\sum_{k=1}^{2}\int\partial_{k}u_{i}\partial_{i}u_{j}\partial_{k}u_{j}\nonumber\\
=& -\sum_{j=1}^{4}\sum_{i,k=1}^{2}\int\partial_{k}u_{i}\partial_{i}u_{j}\partial_{k}u_{j} - \sum_{i=3}^{4}\sum_{j=1}^{4}\sum_{k=1}^{2}\int\partial_{k}u_{i}\partial_{i}u_{j}\partial_{k}u_{j}\nonumber\\
=& -\sum_{i,j,k=1}^{2}\int\partial_{k}u_{i}\partial_{i}u_{j}\partial_{k}u_{j} - \sum_{j=3}^{4}\sum_{i,k=1}^{2}\int\partial_{k}u_{i}\partial_{i}u_{j}\partial_{k}u_{j} - \sum_{i=3}^{4}\sum_{j=1}^{4}\sum_{k=1}^{2}\int\partial_{k}u_{i}\partial_{i}u_{j}\partial_{k}u_{j}.\nonumber
\end{align}
For the second and third integrals of (22), we integrate by parts to obtain 
\begin{align}
&-\sum_{j=3}^{4}\sum_{i,k=1}^{2}\int\partial_{k}u_{i}\partial_{i}u_{j}\partial_{k}u_{j} - \sum_{i=3}^{4}\sum_{j=1}^{4}\sum_{k=1}^{2}\int\partial_{k}u_{i}\partial_{i}u_{j}\partial_{k}u_{j}\\
=& \sum_{j=3}^{4}\sum_{i,k=1}^{2} \int u_{j} \partial_{i}(\partial_{k}u_{i}\partial_{k}u_{j}) + \sum_{i=3}^{4}\sum_{j=1}^{4}\sum_{k=1}^{2} \int u_{i} \partial_{k}(\partial_{i}u_{j}\partial_{k}u_{j})\nonumber\\
\lesssim& \int (\lvert u_{3}\rvert + \lvert u_{4}\rvert) \lvert \nabla u\rvert \lvert \nabla\nabla_{1,2} u\rvert.\nonumber
\end{align}
On the other hand, we write the first integral of (22) explicitly 
\begin{align}
&-\sum_{i,j,k=1}^{2}\int \partial_{k}u_{i}\partial_{i}u_{j}\partial_{k}u_{j}\\
=& -\int(\partial_{1}u_{1})^{3} + \partial_{2}u_{1}\partial_{1}u_{1}\partial_{2}u_{1} + \partial_{1}u_{1}\partial_{1}u_{2}\partial_{1}u_{2} + \partial_{2}u_{1}\partial_{1}u_{2}\partial_{2}u_{2}\nonumber\\
&+ \partial_{1}u_{2} \partial_{2}u_{1}\partial_{1}u_{1} + \partial_{2}u_{2}\partial_{2}u_{1}\partial_{2}u_{1} + \partial_{1}u_{2}\partial_{2}u_{2}\partial_{1}u_{2} + (\partial_{2}u_{2})^{3} \triangleq \sum_{i=1}^{8} I_{i}.\nonumber
\end{align}
We combine and use the incompressibility condition of $u$ to obtain 
\begin{align}
&I_{1} + I_{8} = -\int (\partial_{1}u_{1})^{3} + (\partial_{2}u_{2})^{3}\\
=& \int (\partial_{1}u_{1})^{2}\partial_{2}u_{2} + (\partial_{1}u_{1})^{2} (\partial_{3}u_{3} + \partial_{4}u_{4})+ (\partial_{2}u_{2})^{2}\partial_{1}u_{1} + (\partial_{2}u_{2})^{2}(\partial_{3}u_{3} + \partial_{4}u_{4}).\nonumber
\end{align}
We combine the first and third terms to obtain 
\begin{align*}
\int (\partial_{1}u_{1})^{2} \partial_{2}u_{2} + (\partial_{2}u_{2})^{2} \partial_{1}u_{1} 
=- \int \partial_{1}u_{1}\partial_{2}u_{2}(\partial_{3}u_{3} + \partial_{4}u_{4}) 
\end{align*}
so that we may continue (25) by 
\begin{align}
I_{1} + I_{8} =& -\int \partial_{1}u_{1}\partial_{2}u_{2}(\partial_{3}u_{3} + \partial_{4}u_{4})\\
&+ \int (\partial_{1}u_{1})^{2} (\partial_{3}u_{3} + \partial_{4}u_{4}) + (\partial_{2}u_{2})^{2}(\partial_{3}u_{3} + \partial_{4}u_{4})\nonumber\\
=& \int u_{3} \partial_{3} (\partial_{1}u_{1}\partial_{2}u_{2}) + u_{4}\partial_{4}(\partial_{1}u_{1}\partial_{2}u_{2})\nonumber\\
&- \int u_{3} \partial_{3}[(\partial_{1}u_{1})^{2} + (\partial_{2}u_{2})^{2}] + u_{4} \partial_{4} [(\partial_{1}u_{1})^{2} + (\partial_{2}u_{2})^{2}]\nonumber\\
\lesssim& \int (\lvert u_{3}\rvert + \lvert u_{4}\rvert) \lvert \nabla u\rvert \lvert \nabla\nabla_{1,2} u\rvert.\nonumber
\end{align}
Similarly, 
\begin{align}
I_{2} + I_{6} =& -\int \partial_{2}u_{1}\partial_{1}u_{1}\partial_{2}u_{1} + \partial_{2}u_{2}\partial_{2}u_{1}\partial_{2}u_{1}\\
=& \int (\partial_{2}u_{1})^{2} (\partial_{3}u_{3} + \partial_{4}u_{4}) \lesssim \int (\lvert u_{3}\rvert + \lvert u_{4}\rvert) \lvert \nabla u\rvert \lvert \nabla\nabla_{1,2}u\rvert,\nonumber
\end{align}
\begin{align}
I_{3} + I_{7} =& -\int \partial_{1}u_{1}\partial_{1}u_{2}\partial_{1}u_{2} + \partial_{1}u_{2}\partial_{2}u_{2}\partial_{1}u_{2}\\
=& \int (\partial_{1}u_{2})^{2} (\partial_{3}u_{3} + \partial_{4}u_{4}) \lesssim \int (\lvert u_{3}\rvert + \lvert u_{4}\rvert) \lvert \nabla u\rvert \lvert \nabla\nabla_{1,2} u\rvert,\nonumber
\end{align}
\begin{align}
I_{4} + I_{5} =& -\int \partial_{2}u_{1} \partial_{1}u_{2} \partial_{2}u_{2} + \partial_{1}u_{2}\partial_{2}u_{1} \partial_{1}u_{1}\\
=& \int \partial_{2}u_{1} \partial_{1}u_{2} (\partial_{3}u_{3} + \partial_{4}u_{4}) \lesssim \int (\lvert u_{3} \rvert + \lvert u_{4} \rvert) \lvert \nabla u\rvert \lvert \nabla \nabla_{1,2} u\rvert.\nonumber
\end{align}
Next, we may estimate the other three terms as follows:
\begin{align}
&\int (u\cdot\nabla) b \cdot \Delta_{1,2} b -  (b\cdot\nabla) b \cdot \Delta_{1,2} u -  (b\cdot\nabla) u \cdot \Delta_{1,2} b\\
=& -\sum_{i,j=1}^{4}\sum_{k=1}^{2}\int \partial_{k}u_{i} \partial_{i}b_{j}\partial_{k}b_{j} + \sum_{i,j=1}^{4}\sum_{k=1}^{2}\int \partial_{k}b_{i}\partial_{i}b_{j}\partial_{k}u_{j} + \partial_{k}b_{i}\partial_{i}u_{j}\partial_{k}b_{j}\nonumber\\
=& \sum_{i,j=1}^{4}\sum_{k=1}^{2} \int \partial_{k}u_{i}b_{j} \partial_{ik}^{2} b_{j} - \sum_{i,j=1}^{4} \sum_{k=1}^{2} \int \partial_{k}b_{i} b_{j} \partial_{ik}^{2}u_{j} + b_{i}\partial_{k}(\partial_{i}u_{j}\partial_{k}b_{j})\nonumber\\
\lesssim& \int \lvert b\rvert (\lvert \nabla u\rvert + \lvert \nabla b\rvert) (\lvert \nabla\nabla_{1,2} u\rvert + \lvert \nabla\nabla_{1,2} b\rvert).\nonumber
\end{align}
Applying (26)-(29) in (24), considering (22), (23) and (30) we obtain (20). Now we go back to (22) and estimate the second and third integrals by 
\begin{align}
&-\sum_{j=3}^{4} \sum_{i,k=1}^{2} \int \partial_{k}u_{i}\partial_{i}u_{j} \partial_{k}u_{j} - \sum_{i=3}^{4} \sum_{j=1}^{4}\sum_{k=1}^{2}\int \partial_{k}u_{i}\partial_{i}u_{j}\partial_{k}u_{j}\\
\lesssim& \sum_{j=3}^{4}\sum_{k=1}^{2}\int \lvert \partial_{k} u \rvert \lvert \nabla u_{j} \rvert \lvert \partial_{k} u\rvert + \sum_{i=3}^{4}\sum_{k=1}^{2}\int \lvert \nabla u_{i} \rvert \lvert \nabla u\rvert \lvert \partial_{k} u\rvert \nonumber\\
\lesssim& \int (\lvert \nabla u_{3} \rvert + \lvert \nabla u_{4} \rvert) \lvert \nabla_{1,2} u\rvert \lvert \nabla u\rvert\nonumber
\end{align}
whereas continuing from (26), 
\begin{align}
I_{1} + I_{8} =& -\int \partial_{1}u_{1} \partial_{2}u_{2} (\partial_{3} u_{3} + \partial_{4} u_{4}) + \left((\partial_{1}u_{1})^{2} + (\partial_{2}u_{2})^{2}\right) (\partial_{3}u_{3} + \partial_{4} u_{4})\\
\lesssim& \int \lvert \nabla_{1,2} u\rvert^{2} ( \lvert \partial_{3}u_{3} \rvert + \lvert \partial_{4}u_{4}\rvert),\nonumber
\end{align}
continuing from (27), 
\begin{align}
I_{2} + I_{6} = \int (\partial_{2}u_{1})^{2}(\partial_{3}u_{3} + \partial_{4}u_{4}) \lesssim \int \lvert \nabla_{1,2} u\rvert^{2}(\lvert \partial_{3}u_{3} \rvert + \lvert \partial_{4}u_{4}\rvert),
\end{align}
continuing from (28),
\begin{align}
I_{3} + I_{7} = \int (\partial_{1}u_{2})^{2}(\partial_{3} u_{3} + \partial_{4}u_{4})
\lesssim \int \lvert \nabla_{1,2} u\rvert^{2}(\lvert \partial_{3}u_{3} \rvert + \lvert \partial_{4}u_{4}\rvert),
\end{align}
and continuing from (29),
\begin{align}
I_{4} + I_{5} = \int \partial_{2}u_{1}\partial_{1}u_{2}(\partial_{3}u_{3} + \partial_{4}u_{4})
\lesssim \int \lvert \nabla_{1,2} u\rvert^{2}(\lvert \partial_{3}u_{3} \rvert + \lvert \partial_{4}u_{4}\rvert).
\end{align}
Thus, considering (31)-(35) in (22),  we have shown 
\begin{equation}
\int (u\cdot\nabla) u \cdot \Delta_{1,2} u \lesssim \int (\lvert \nabla u_{3} \rvert + \lvert \nabla u_{4} \rvert) \lvert \nabla_{1,2} u\rvert \lvert \nabla u\rvert.
\end{equation}
Next, we estimate continuing from (30) 
\begin{align}
&\int (u\cdot\nabla) b \cdot \Delta_{1,2} b -  (b\cdot\nabla) b \cdot \Delta_{1,2} u -  (b\cdot\nabla) u \cdot \Delta_{1,2}b\\
=& -\sum_{i,j=1}^{4}\sum_{k=1}^{2}\int \partial_{k}u_{i} \partial_{i}b_{j}\partial_{k}b_{j} - \partial_{k}b_{i}\partial_{i}b_{j}\partial_{k}u_{j} - \partial_{k}b_{i}\partial_{i}u_{j}\partial_{k}b_{j}\lesssim \int \lvert \nabla b \rvert \lvert \nabla_{1,2} b\rvert \lvert \nabla u\rvert.\nonumber
\end{align}
Considering (36) and (37), we obtain (21). This completes the proof of Proposition 3.1. 
\end{proof}

With this proposition, we now obtain our first estimate:
\begin{proposition}
Let $N =4$ and $(u,b)$ be the solution pair to the MHD system (2a)-(2c) that satisfies the hypothesis of Theorem 1.3. Then $\forall \hspace{1mm} t \in (0, T], p_{i} \in [6, \infty]$,
\begin{align*}
&\sup_{\tau \in [0, t]} W(\tau) + \int_{0}^{t}Y(\tau) d\tau\\
\leq& W(0) + c\sum_{i=3}^{4}\int_{0}^{t} \lVert u_{i} \rVert_{L^{p_{i}}}^{\frac{2p_{i}}{p_{i} - 2}}X^{\frac{p_{i} - 4}{p_{i} - 2}}(\tau)Z^{\frac{2}{p_{i} - 2}}(\tau)+ \lVert b\rVert_{L^{p_{b}}}^{\frac{2p_{b}}{p_{b} - 2}}X^{\frac{p_{b} - 4}{p_{b} - 2}}(\tau)Z^{\frac{2}{p_{b} - 2}}(\tau)d\tau 
\end{align*}
with the usual convention at the case $p_{i} = \infty, i = 3 , 4, b$; i.e. $\frac{2p_{i}}{p_{i} - 2} = 2, \frac{p_{i} - 4}{p_{i} - 2} = 1, \frac{2}{p_{i} - 2} = 0$. 
\end{proposition}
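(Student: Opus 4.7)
I begin by taking the $L^{2}$-inner products of (2a) and (2b) against $-\Delta_{1,2}u$ and $-\Delta_{1,2}b$ respectively, and summing. Two integrations by parts in the dissipative terms produce $\lVert\nabla\nabla_{1,2}u\rVert_{L^{2}}^{2}+\lVert\nabla\nabla_{1,2}b\rVert_{L^{2}}^{2}=Y(\tau)$, while the pressure contribution $-\int\nabla\pi\cdot\Delta_{1,2}u = \int\pi\,\Delta_{1,2}(\nabla\cdot u)=0$ vanishes by (2c). This yields
\[
\tfrac12\partial_{\tau}W(\tau)+Y(\tau)=-\mathcal{N}(\tau),
\]
where $\mathcal{N}$ is precisely the integral on the left-hand side of (20). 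Invoking Proposition 3.1 bounds $|\mathcal{N}|$ by the right-hand side of (20), so it suffices to control
\[
\int(|u_{3}|+|u_{4}|)|\nabla u||\nabla\nabla_{1,2}u|+\int|b|(|\nabla u|+|\nabla b|)(|\nabla\nabla_{1,2}u|+|\nabla\nabla_{1,2}b|).
\]

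Each integrand above has the schematic form $\int|f||\nabla g||\nabla\nabla_{1,2}h|$ with $f\in\{u_{3},u_{4},b\}$ and $g,h\in\{u,b\}$, so it is enough to estimate a representative, say $\int|u_{3}||\nabla u||\nabla\nabla_{1,2}u|$. Apply H\"older with exponents $p_{3}, q_{3}=\tfrac{2p_{3}}{p_{3}-2}, 2$, giving the bound $\lVert u_{3}\rVert_{L^{p_{3}}}\lVert\nabla u\rVert_{L^{q_{3}}}\lVert\nabla\nabla_{1,2}u\rVert_{L^{2}}$. Interpolate
\[
\lVert\nabla u\rVert_{L^{q_{3}}}\leq\lVert\nabla u\rVert_{L^{2}}^{(p_{3}-4)/p_{3}}\lVert\nabla u\rVert_{L^{4}}^{4/p_{3}},
\]
and apply Lemma 2.1 component-wise to $\nabla u$ together with AM--GM, splitting horizontal $(k=1,2)$ from full $(k=3,4)$ second directional derivatives, to obtain
\[
\lVert\nabla u\rVert_{L^{4}}\lesssim\lVert\nabla\nabla_{1,2}u\rVert_{L^{2}}^{1/2}\lVert\Delta u\rVert_{L^{2}}^{1/2}.
\]
Substituting yields
\[
\int|u_{3}||\nabla u||\nabla\nabla_{1,2}u|\lesssim\lVert u_{3}\rVert_{L^{p_{3}}}\lVert\nabla u\rVert_{L^{2}}^{(p_{3}-4)/p_{3}}\lVert\Delta u\rVert_{L^{2}}^{2/p_{3}}\lVert\nabla\nabla_{1,2}u\rVert_{L^{2}}^{(p_{3}+2)/p_{3}}.
\]
Now apply Young's inequality with the non-trivial conjugate exponents $\alpha=\tfrac{2p_{3}}{p_{3}+2}$ (acting on the $\lVert\nabla\nabla_{1,2}u\rVert_{L^{2}}$ factor, raising it to $2$) and $\beta=\tfrac{2p_{3}}{p_{3}-2}$ (on the rest). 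Absorbing $\epsilon\lVert\nabla\nabla_{1,2}u\rVert_{L^{2}}^{2}\leq\epsilon Y$ produces the desired bound $c\lVert u_{3}\rVert_{L^{p_{3}}}^{2p_{3}/(p_{3}-2)}X^{(p_{3}-4)/(p_{3}-2)}Z^{2/(p_{3}-2)}$.

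The $u_{4}$ term is identical. The four sub-terms arising from the $|b|$-integrand are treated the same way, with the minor modification that for mixed factors one uses $\lVert\nabla\nabla_{1,2}u\rVert_{L^{2}}^{2/p_{b}}\lVert\nabla\nabla_{1,2}b\rVert_{L^{2}}\leq Y^{(p_{b}+2)/(2p_{b})}$ before Young. The endpoint $p_{i}=\infty$ reduces to a direct H\"older step $\lVert f\rVert_{L^{\infty}}\lVert\nabla u\rVert_{L^{2}}\lVert\nabla\nabla_{1,2}u\rVert_{L^{2}}$ followed by $ab\leq\tfrac{\epsilon}{2}a^{2}+\tfrac{1}{2\epsilon}b^{2}$. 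Summing the finitely many contributions, choosing $\epsilon$ small enough to absorb the total $Y$-piece on the left, and integrating over $\tau\in[0,t]$ delivers the stated inequality.

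\textbf{Expected main obstacle.} The crux is the Troisi-based splitting $\lVert\nabla u\rVert_{L^{4}}\lesssim\lVert\nabla\nabla_{1,2}u\rVert_{L^{2}}^{1/2}\lVert\Delta u\rVert_{L^{2}}^{1/2}$: the naive alternative via $\dot H^{1}(\mathbb{R}^{4})\hookrightarrow L^{4}(\mathbb{R}^{4})$ would route all of $\lVert\nabla u\rVert_{L^{4}}$ through $\lVert\Delta u\rVert_{L^{2}}$ and force Young's inequality with exponents $(2,2)$, producing only $\lVert u_{3}\rVert_{L^{p_{3}}}^{2}$ instead of the claimed $\lVert u_{3}\rVert_{L^{p_{3}}}^{2p_{3}/(p_{3}-2)}$. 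It is precisely the extra factor $\lVert\nabla\nabla_{1,2}u\rVert_{L^{2}}^{2/p_{3}}$ manufactured by Lemma 2.1 that makes the conjugate pair $\bigl(\tfrac{2p_{3}}{p_{3}+2},\tfrac{2p_{3}}{p_{3}-2}\bigr)$ available and produces the scaling-compatible triple $(\lVert u_{3}\rVert_{L^{p_{3}}}, X, Z)$ appearing in the statement.
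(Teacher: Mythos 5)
Your proposal is correct and follows essentially the same route as the paper's own proof: the energy identity with $-\Delta_{1,2}u,-\Delta_{1,2}b$ combined with estimate (20) of Proposition 3.1, then H\"older with exponents $(p_{i},\tfrac{2p_{i}}{p_{i}-2},2)$, interpolation of $\lVert\nabla u\rVert_{L^{2p_{i}/(p_{i}-2)}}$ between $L^{2}$ and $L^{4}$, the Troisi-based bound $\lVert\nabla u\rVert_{L^{4}}\lesssim\lVert\nabla\nabla_{1,2}u\rVert_{L^{2}}^{1/2}\lVert\Delta u\rVert_{L^{2}}^{1/2}$ from Lemma 2.1, and Young's inequality with the conjugate pair $\bigl(\tfrac{2p_{i}}{p_{i}+2},\tfrac{2p_{i}}{p_{i}-2}\bigr)$, exactly as in (38)--(40), with the same separate elementary treatment of $p_{i}=\infty$. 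Your identification of the Troisi splitting as the step that makes the correct Young exponents available is precisely the mechanism the paper exploits.
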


\begin{proof}
We treat the case $6 \leq p_{i} < \infty \hspace{1mm} \forall \hspace{1mm} i = 3, 4, b$ first. We take $L^{2}$-inner products on (2a)-(2b) with $-\Delta_{1,2}u, -\Delta_{1,2} b$ respectively to obtain in sum 
\begin{align}
&\frac{1}{2} \partial_{t} W(t) + Y(t)\\
\lesssim& \sum_{i=3}^{4} \int \lvert u_{i} \rvert \lvert \nabla u\rvert \lvert \nabla\nabla_{1,2} u\rvert + \lvert b\rvert (\lvert \nabla u\rvert + \lvert \nabla b\rvert)(\lvert \nabla\nabla_{1,2} u\rvert + \lvert \nabla\nabla_{1,2} b\rvert) \triangleq II_{1} + II_{2} \nonumber
\end{align}
by (20). Now we estimate 
\begin{align}
II_{1} \approx& \sum_{i=3}^{4} \int \lvert u_{i} \rvert \lvert \nabla u\rvert \lvert \nabla\nabla_{1,2} u\rvert\lesssim \sum_{i=3}^{4} \lVert u_{i} \rVert_{L^{p_{i}}} \lVert \nabla u\rVert_{L^{\frac{2p_{i}}{p_{i}-2}}} \lVert \nabla\nabla_{1,2} u\rVert_{L^{2}}\\
\lesssim& \sum_{i=3}^{4}\lVert u_{i} \rVert_{L^{p_{i}}} \lVert \nabla u\rVert_{L^{2}}^{\frac{p_{i}-4}{p_{i}}} \lVert \nabla u\rVert_{L^{4}}^{\frac{4}{p_{i}}} \lVert \nabla\nabla_{1,2} u\rVert_{L^{2}}\nonumber\\
\lesssim&\sum_{i=3}^{4} \lVert u_{i} \rVert_{L^{p_{i}}} \lVert \nabla u\rVert_{L^{2}}^{\frac{p_{i}-4}{p_{i}}} \lVert \nabla\nabla_{1,2} u\rVert_{L^{2}}^{\frac{2}{p_{i}} + 1} \lVert \Delta u\rVert_{L^{2}}^{\frac{2}{p_{i}}} \nonumber\\
\leq& \frac{1}{4} \lVert \nabla\nabla_{1,2} u\rVert_{L^{2}}^{2} + c\sum_{i=3}^{4}\lVert u_{i} \rVert_{L^{p_{i}}}^{\frac{2p_{i}}{p_{i} -2}} X^{\frac{p_{i} - 4}{p_{i} - 2}}(t)Z^{\frac{2}{p_{i} -2}}(t)\nonumber
\end{align}
by H$\ddot{o}$lder's and interpolation inequalities, (18) and Young's inequalities. Similarly, 
\begin{align}
II_{2} \approx& \int \lvert b\rvert ( \lvert \nabla u\rvert + \lvert \nabla b\rvert) ( \lvert \nabla\nabla_{1,2} u \rvert + \lvert \nabla\nabla_{1,2} b\rvert)\\
\lesssim& \lVert b\rVert_{L^{p_{b}}} (\lVert \nabla u\rVert_{L^{2}}^{\frac{p_{b} - 4}{p_{b}}} + \lVert \nabla b\rVert_{L^{2}}^{\frac{p_{b} - 4}{p_{b}}})(\lVert \nabla u\rVert_{L^{4}}^{\frac{4}{p_{b}}} + \lVert \nabla b\rVert_{L^{4}}^{\frac{4}{p_{b}}}) (\lVert \nabla\nabla_{1,2} u\rVert_{L^{2}} + \lVert \nabla\nabla_{1,2} b\rVert_{L^{2}})\nonumber\\
\lesssim& \lVert b\rVert_{L^{p_{b}}} X^{\frac{p_{b} - 4}{2p_{b}}}(t)(\lVert \nabla\nabla_{1,2} u\rVert_{L^{2}}^{\frac{2}{p_{b}}} \lVert \Delta u\rVert_{L^{2}}^{\frac{2}{p_{b}}} + \lVert \nabla\nabla_{1,2} b\rVert_{L^{2}}^{\frac{2}{p_{b}}}\lVert \Delta b\rVert_{L^{2}}^{\frac{2}{p_{b}}})Y^{\frac{1}{2}}(t)\nonumber\\
\leq& \frac{1}{4} Y(t) + c\lVert b\rVert_{L^{p_{b}}}^{\frac{2p_{b}}{p_{b} - 2}}X^{\frac{p_{b} - 4}{p_{b} - 2}}(t)Z^{\frac{2}{p_{b} - 2}}(t)\nonumber
\end{align}
by H$\ddot{o}$lder's and interpolation inequalities, (19), (18) and Young's inequality. In sum of (39) and (40) in (38),  after absorbing and integrating over time $[0,t], t \in (0, T]$, we obtain the desired result in case $6 \leq p_{i} < \infty$. In case, $p_{i} = \infty$, the estimate is in fact simpler: we have 
\begin{align*}
II_{1} \lesssim& \sum_{i=3}^{4} \lVert u_{i} \rVert_{L^{\infty}} \lVert \nabla u\rVert_{L^{2}} \lVert \nabla\nabla_{1,2} u\rVert_{L^{2}}
\leq \frac{1}{4} \lVert \nabla\nabla_{1,2} u\rVert_{L^{2}}^{2} + c\sum_{i=3}^{4}\lVert u_{i} \rVert_{L^{\infty}}^{2} \lVert \nabla u\rVert_{L^{2}}^{2},\\
II_{2} \lesssim& \lVert b\rVert_{L^{\infty}} (\lVert \nabla u\rVert_{L^{2}} + \lVert \nabla b\rVert_{L^{2}})(\lVert \nabla\nabla_{1,2} u\rVert_{L^{2}} + \lVert \nabla\nabla_{1,2} b\rVert_{L^{2}}) 
\leq \frac{1}{4}Y(t) + c\lVert b\rVert_{L^{\infty}}^{2} X(t).
\end{align*}
Thus, in case $p_{i} = \infty$, Proposition 3.2 holds with $\frac{2p_{i}}{p_{i} -2} = 2, \frac{p_{i} -4}{p_{i} -2} = 1, \frac{2}{p_{i} -2} = 0$. 
\end{proof}

\subsection{$\lVert \nabla u\rVert_{L^{2}}^{2} + \lVert \nabla b\rVert_{L^{2}}^{2}$-estimate}
The next important step of the proof is to make use of the $\lVert \nabla_{1,2} u\rVert_{L^{2}}^{2} + \lVert \nabla_{1,2} b\rVert_{L^{2}}^{2}$-estimate to obtain the bound on $\lVert \nabla u\rVert_{L^{2}}^{2} + \lVert \nabla  b\rVert_{L^{2}}^{2}$, which requires another key decomposition (see (43), (46)). 
\begin{proposition}
Let $N =4$ and $(u,b)$ be the solution pair to the MHD system (2a)-(2c) that satisfies the hypothesis of Theorem 1.3. Then 
\begin{align*}
\sup_{t \in [0,T]} X(t) + \int_{0}^{T} Z(\tau) d\tau \lesssim 1. 
\end{align*}
\end{proposition}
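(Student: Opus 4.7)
The plan is to derive an $H^{1}$-energy inequality for $X(t)$ by pairing (2a)--(2b) with $-\Delta u, -\Delta b$, and then close it against Proposition~3.2's $\nabla_{1,2}$-estimate via Gronwall. Taking $L^{2}$-inner products and noting that the pressure integral vanishes by incompressibility, summation yields
\begin{equation*}
\tfrac{1}{2}\partial_t X(t) + Z(t) = \int (u\cdot\nabla)u\cdot\Delta u + \int (u\cdot\nabla) b\cdot \Delta b - \int (b\cdot\nabla) b\cdot\Delta u - \int (b\cdot\nabla) u\cdot \Delta b.
\end{equation*}

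For the nonlinear side I would invoke the second key observation highlighted in the introduction: write $u\cdot\nabla = \sum_{i=1}^{2} u_i\partial_i + \sum_{i=3}^{4} u_i\partial_i$ and likewise for $b\cdot\nabla$, integrate by parts as convenient, and split the resulting terms into two families. In the first family the differentiating index lies in $\{1,2\}$, so a factor $\nabla_{1,2} u$ or $\nabla_{1,2} b$ is available; Lemma~2.1 then gives $\|\nabla_{1,2} u\|_{L^{4}}\lesssim Y^{1/2}$ and $\|\nabla u\|_{L^{4}}\lesssim Y^{1/4}Z^{1/4}$ (by choosing $\ell\in\{1,2\}$ versus $\ell\in\{3,4\}$ in the four-factor product of Troisi), so that, for example,
\begin{equation*}
\int |\nabla_{1,2} u||\nabla u|^{2} \lesssim \|\nabla_{1,2} u\|_{L^{4}}\|\nabla u\|_{L^{4}}\|\nabla u\|_{L^{2}} \lesssim X^{1/2}Y^{3/4}Z^{1/4},
\end{equation*}
which three-factor Young reduces to $\tfrac{1}{8}Z + cY + cX^{2}$. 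In the second family a factor of $u_3$, $u_4$, or $b$ appears multiplicatively; H\"older with that factor in $L^{p_i}$, interpolation of $\|\nabla\cdot\|_{L^{2p_i/(p_i-2)}}$ between $L^{2}$ and $L^{4}$ (the latter again via Lemma~2.1), and Young's inequality produce bounds of the very same shape already encountered in Proposition~3.2, namely $c\|f\|_{L^{p_i}}^{2p_i/(p_i-2)} X^{(p_i-4)/(p_i-2)} Z^{2/(p_i-2)}$, absorbable as $\tfrac{1}{8}Z$ plus an integrable-in-time multiple of $X$.

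Adding the resulting differential inequality for $X$ to the one for $W$ implicit in the proof of Proposition~3.2, and absorbing every $Y,Z$ term on the right, one arrives at
\begin{equation*}
\partial_t(X+W) + \tfrac{1}{2}(Y+Z) \lesssim \Theta(t)\,(X+W) + \Xi(t),
\end{equation*}
with $\Theta,\Xi\in L^{1}(0,T)$ by the hypothesis (15); Gronwall's inequality then delivers $\sup_{t\in[0,T]} X(t) + \int_{0}^{T} Z\,d\tau\lesssim 1$. The main technical obstacle is an exponent-matching check: one must verify $\tfrac{2p_i}{p_i-2}\leq r_i$ throughout $p_i\in(6,\infty]$ so that each $\int_0^T \|f\|_{L^{p_i}}^{2p_i/(p_i-2)}\,d\tau$ is finite, and handle separately the endpoint $p_i=\infty$ and the small-$L^{6}$ alternative, both of which are dispatched by the simpler direct estimates already used at the end of the proof of Proposition~3.2.
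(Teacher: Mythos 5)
There is a genuine gap at the decisive step of the argument. Your overall skeleton (test with $-\Delta u,-\Delta b$, split the nonlinearity into the $\nabla_{1,2}$-family and the $u_{3},u_{4},b$-family, then close with Proposition 3.2 and Gronwall) is the same as the paper's, and your second-family estimates are fine; but your treatment of the cross term $\int \lvert \nabla_{1,2}u\rvert\lvert\nabla u\rvert^{2}$ is wrong, and this term is precisely the crux of the whole proposition. You bound it by $X^{1/2}Y^{3/4}Z^{1/4}$ (correct) and then claim a ``three-factor Young'' reduction to $\tfrac18 Z + cY + cX^{2}$. No such inequality exists: the exponents of $Y$ and $Z$ already sum to $1$, so there is no room left to pay with a power of $X$. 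Concretely, setting $X=1$ and $Y=Z=M$, the claimed bound would read $M \leq \tfrac18 M + cM + c$, which fails for large $M$ unless the constant in front of $Y$ is at least of order one --- and a constant-one multiple of $Y$ cannot be absorbed, since the left-hand side controls $Y$ only through $Z\geq Y$ with coefficient exactly $1$ (and through the $W$-equation with coefficient $1$). Consequently the clean pointwise differential inequality $\partial_{t}(X+W)+\tfrac12(Y+Z)\lesssim \Theta(X+W)+\Xi$ that you feed into Gronwall is unobtainable; this is exactly the four-dimensional obstruction the introduction describes (the failure of the analogue of (10)).

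The paper's proof resolves this differently: the cross term is kept as $W^{1/2}Y^{1/2}Z^{1/2}$ (H\"older split $2,4,4$ rather than your $4,4,2$, so that the $L^{\infty}_{t}$ factor is $W$, which Proposition 3.2 controls, rather than $X$, which is the unknown), it is \emph{not} absorbed pointwise, and one first integrates (50) in time. Then H\"older in time gives $\sup_{\tau}W^{1/2}\left(\int_{0}^{t}Y\right)^{1/2}\left(\int_{0}^{t}Z\right)^{1/2}$, Proposition 3.2 is applied in its quantitative integrated form (this is why its right-hand side is stated explicitly and not merely as a bound), another H\"older in time separates the factors, and only then does Young's inequality absorb the resulting power $\left(\int_{0}^{t}Z\right)^{\frac{p_{i}+2}{2(p_{i}-2)}}$ --- which is possible precisely because $\frac{p_{i}+2}{2(p_{i}-2)}<1$ iff $p_{i}>6$, the strict inequality in hypothesis (15). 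Together with the energy inequality (3) this yields a Gronwall factor $\lVert u_{i}\rVert_{L^{p_{i}}}^{\frac{4p_{i}}{p_{i}-6}}$, whose time-integrability is exactly the scaling condition in (15); note this exponent, not your $\frac{2p_{i}}{p_{i}-2}$, is the one that must be checked against $r_{i}$. Your proposal, by attempting a pointwise absorption, skips the entire mechanism that makes the theorem work and that dictates its hypotheses.
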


\begin{proof}
Firstly, we assume $6 \leq p_{i} < \infty$ again. We take $L^{2}$-inner products on (2a)-(2b) with $(-\Delta u, -\Delta b)$ respectively to obtain 
\begin{align}
&\frac{1}{2} \partial_{t} X(t) + Z(t)\\
=& \int (u\cdot\nabla) u \cdot \Delta_{1,2} u + (u\cdot\nabla) u \cdot \Delta_{3,4} u + (u\cdot\nabla) b \cdot \Delta_{1,2} b +  (u\cdot\nabla) b \cdot \Delta_{3,4} b \nonumber\\
& -  (b\cdot\nabla) b \cdot \Delta_{1,2} u - (b\cdot\nabla) b \cdot \Delta_{3,4} u -(b\cdot\nabla) u \cdot \Delta_{1,2} b - (b\cdot\nabla) u \cdot \Delta_{3,4} b\triangleq \sum_{i=1}^{8} III_{i}.\nonumber
\end{align}
From (38)-(40), we already have the estimates of 
\begin{align}
&III_{1} + III_{3} + III_{5} + III_{7}\lesssim II_{1} + II_{2}\\
\lesssim& \sum_{i=3}^{4}\lVert u_{i} \rVert_{L^{p_{i}}} \lVert \nabla u\rVert_{L^{2}}^{\frac{p_{i} -4}{p_{i}}} \lVert \nabla \nabla_{1,2} u\rVert_{L^{2}}^{\frac{2+p_{i}}{p_{i}}} \lVert \Delta u\rVert_{L^{2}}^{\frac{2}{p_{i}}} + \lVert b\rVert_{L^{p_{b}}} X^{\frac{p_{b} - 4}{2p_{b}}}(t)Z^{\frac{1}{p_{b}}}(t)Y^{\frac{1}{p_{b}} + \frac{1}{2}}(t)\nonumber\\
\leq& \frac{1}{16} Z(t) + c\sum_{i=3}^{4}(\lVert u_{i} \rVert_{L^{p_{i}}}^{\frac{2p_{i}}{p_{i} - 4}} + \lVert b\rVert_{L^{p_{b}}}^{\frac{2p_{b}}{p_{b} - 4}})X(t)\nonumber
\end{align}
by Young's inequalities. Next, we work on $III_{2}$, which we first integrate by parts and decompose as follows: 
\begin{align}
III_{2} =&\int (u\cdot\nabla) u \cdot \Delta_{3,4} u= -\sum_{i,j=1}^{4}\sum_{k=3}^{4}\int \partial_{k}u_{i}\partial_{i}u_{j}\partial_{k}u_{j}\\
=& -\sum_{i=1}^{2}\sum_{j=1}^{4}\sum_{k=3}^{4}\int \partial_{k}u_{i}\partial_{i}u_{j}\partial_{k}u_{j} - \sum_{j=1}^{4}\sum_{i,k=3}^{4}\int\partial_{k}u_{i}\partial_{i}u_{j}\partial_{k}u_{j}\nonumber\\
=& -\sum_{i=1}^{2}\sum_{j=1}^{4}\sum_{k=3}^{4}\int\partial_{k}u_{i}\partial_{i}u_{j}\partial_{k}u_{j} + \sum_{j=1}^{4}\sum_{i,k=3}^{4}\int u_{i}\partial_{k}(\partial_{i}u_{j}\partial_{k}u_{j})\nonumber\\
\lesssim& \int \lvert \nabla u\rvert^{2} \lvert \nabla_{1,2} u\rvert + \sum_{i=3}^{4} \int \lvert u_{i} \rvert \lvert \nabla u\rvert \lvert \nabla^{2} u\rvert \triangleq IV_{1} + IV_{2}.\nonumber
\end{align}
We estimate 
\begin{align}
IV_{1} \approx& \int \lvert \nabla_{1,2} u\rvert \lvert \nabla u\rvert^{2} \lesssim \lVert \nabla_{1,2} u\rVert_{L^{2}} \lVert \nabla u\rVert_{L^{4}}^{2}\\
\lesssim& \lVert \nabla_{1,2} u\rVert_{L^{2}} \lVert \nabla\nabla_{1,2} u\rVert_{L^{2}} \lVert \Delta u\rVert_{L^{2}}\lesssim W^{\frac{1}{2}}(t)Y^{\frac{1}{2}}(t) Z^{\frac{1}{2}}(t)\nonumber
\end{align}
by H$\ddot{o}$lder's inequalities and (18). On the other hand, 
\begin{align}
IV_{2} \lesssim& \sum_{i=3}^{4}\lVert u_{i} \rVert_{L^{p_{i}}} \lVert \nabla u\rVert_{L^{\frac{2p_{i}}{p_{i} -2}}} \lVert \nabla^{2} u\rVert_{L^{2}}\\
\lesssim& \sum_{i=3}^{4}\lVert u_{i}\rVert_{L^{p_{i}}} \lVert \nabla u\rVert_{L^{2}}^{1-\frac{4}{p_{i}}} \lVert \Delta u\rVert_{L^{2}}^{1+\frac{4}{p_{i}}} \leq \frac{1}{16}Z(t) + c\sum_{i=3}^{4}\lVert u_{i} \rVert_{L^{p_{i}}}^{\frac{2p_{i}}{p_{i} -4}} X(t)  \nonumber
\end{align}
by H$\ddot{o}$lder's, Gagliardo-Nirenberg and Young's inequalities. Next, again we carefully decompose
\begin{align}
III_{4} =& \int (u\cdot\nabla) b \cdot \Delta_{3,4} b = -\sum_{i,j=1}^{4}\sum_{k=3}^{4}\int \partial_{k}u_{i}\partial_{i}b_{j}\partial_{k}b_{j} \\
=& -\sum_{i=1}^{2}\sum_{j=1}^{4}\sum_{k=3}^{4}\int \partial_{k}u_{i}\partial_{i}b_{j}\partial_{k}b_{j} - \sum_{j=1}^{4}\sum_{i,k=3}^{4}\int \partial_{k}u_{i}\partial_{i}b_{j}\partial_{k}b_{j}\nonumber\\
=& -\sum_{i=1}^{2}\sum_{j=1}^{4}\sum_{k=3}^{4}\int \partial_{k}u_{i}\partial_{i}b_{j}\partial_{k}b_{j}  + \sum_{j=1}^{4}\sum_{i,k=3}^{4}\int u_{i}\partial_{k} (\partial_{i}b_{j}\partial_{k}b_{j})\nonumber\\
\lesssim&\int \lvert \nabla u\rvert \lvert \nabla_{1,2} b\rvert \lvert \nabla b\rvert + \sum_{i=3}^{4} \int \lvert u_{i}\rvert \lvert \nabla b\rvert \lvert \nabla^{2} b\rvert \triangleq IV_{3} + IV_{4}.\nonumber
\end{align}
We estimate 
\begin{align}
IV_{3} \approx& \int \lvert \nabla u\rvert \lvert \nabla_{1,2} b\rvert \lvert \nabla b\rvert \lesssim \lVert \nabla_{1,2} b\rVert_{L^{2}} \lVert \nabla u\rVert_{L^{4}} \lVert \nabla b\rVert_{L^{4}}\\
\lesssim& \lVert \nabla_{1,2} b\rVert_{L^{2}} \lVert \nabla\nabla_{1,2} u\rVert_{L^{2}}^{\frac{1}{2}} \lVert \nabla\nabla_{1,2} b\rVert_{L^{2}}^{\frac{1}{2}} \lVert \Delta u\rVert_{L^{2}}^{\frac{1}{2}} \lVert \Delta b\rVert_{L^{2}}^{\frac{1}{2}}\lesssim W^{\frac{1}{2}}(t) Y^{\frac{1}{2}}(t) Z^{\frac{1}{2}}(t)\nonumber
\end{align}
by H$\ddot{o}$lder's inequalities, (18) and Young's inequalities. On the other hand, we estimate similarly to $IV_{2}$ in (45), 
\begin{align}
IV_{4} \lesssim& \sum_{i=3}^{4} \lVert u_{i} \rVert_{L^{p_{i}}} \lVert \nabla b\rVert_{L^{\frac{2p_{i}}{p_{i} -2}}} \lVert \nabla^{2} b\rVert_{L^{2}}\\
\lesssim& \sum_{i=3}^{4}  \lVert u_{i}\rVert_{L^{p_{i}}} \lVert \nabla b\rVert_{L^{2}}^{1-\frac{4}{p_{i}}} \lVert \Delta b\rVert_{L^{2}}^{1+\frac{4}{p_{i}}} \leq \frac{1}{16} Z(t) + c\sum_{i=3}^{4} \lVert u_{i} \rVert_{L^{p_{i}}}^{\frac{2p_{i}}{p_{i} -4}} X(t)  \nonumber
\end{align}
by H$\ddot{o}$lder's, Gagliardo-Nirenberg and Young's inequalities. Finally, similarly to $IV_{2}$ in (45) again 
\begin{align}
III_{6} + III_{8} =& -\int (b\cdot\nabla) b \cdot \Delta_{3,4} u + (b\cdot\nabla) u \cdot \Delta_{3,4} b\\
\lesssim& \lVert b\rVert_{L^{p_{b}}} (\lVert \nabla b\rVert_{L^{2}}^{1-\frac{4}{p_{b}}} + \lVert \nabla u\rVert_{L^{2}}^{1-\frac{4}{p_{b}}}) (\lVert \Delta u\rVert_{L^{2}}^{1+ \frac{4}{p_{b}}} + \lVert \Delta b\rVert_{L^{2}}^{1+ \frac{4}{p_{b}}})\nonumber\\
\leq& \frac{1}{16} Z(t) + c\lVert b\rVert_{L^{p_{b}}}^{\frac{2p_{b}}{p_{b} - 4}}X(t)\nonumber
\end{align}
by H$\ddot{o}$lder's, Gagliardo-Nirenberg and Young's inequalities. Thus, applying (42)-(49) in (41), we obtain after absorbing 
\begin{align}
\frac{1}{2} \partial_{t} X + \frac{1}{2}Z(t)
\lesssim \sum_{i=3}^{4}(\lVert u_{i} \rVert_{L^{p_{i}}}^{\frac{2p_{i}}{p_{i} - 4}} + \lVert b\rVert_{L^{p_{b}}}^{\frac{2p_{b}}{p_{b} - 4}})X(t) + W^{\frac{1}{2}}(t)Y^{\frac{1}{2}}(t)Z^{\frac{1}{2}}(t).
\end{align}
Now we assume $6 < p_{i} < \infty$. Integrating over $[0,t], t \in (0, T]$, we obtain 
\begin{align*}
&X(t) + \int_{0}^{t} Z(\tau) d\tau\\
\leq& X(0) + c\sum_{i=3}^{4}\int_{0}^{t} (\lVert u_{i} \rVert_{L^{p_{i}}}^{\frac{2p_{i}}{p_{i} - 4}}  + \lVert b\rVert_{L^{p_{b}}}^{\frac{2p_{b}}{p_{b} - 4}})X(\tau) d\tau + c\int_{0}^{t} W^{\frac{1}{2}}(\tau) Y^{\frac{1}{2}}(\tau) Z^{\frac{1}{2}}(\tau) d\tau.
\end{align*}
We focus only on the last integral which we bound by a constant multiples of 
\begin{align*}
&\sup_{\tau \in [0,t]} W^{\frac{1}{2}}(\tau) \left(\int_{0}^{t} Y(\tau) d\tau\right)^{\frac{1}{2}} \left(\int_{0}^{t} Z(\tau) d\tau\right)^{\frac{1}{2}}\\
\lesssim& \left(W(0) + \sum_{i=3}^{4}\int_{0}^{t} \lVert u_{i} \rVert_{L^{p_{i}}}^{\frac{2p_{i}}{p_{i} - 2}}X^{\frac{p_{i} - 4}{p_{i} - 2}}(\tau)Z^{\frac{2}{p_{i} - 2}}(\tau) + \lVert b\rVert_{L^{p_{b}}}^{\frac{2p_{b}}{p_{b} - 2}}X^{\frac{p_{b} - 4}{p_{b} - 2}}(\tau)Z^{\frac{2}{p_{b} - 2}}(\tau)d\tau  \right)\\
&\times \left(\int_{0}^{t} Z(\tau) d\tau\right)^{\frac{1}{2}}\\
\lesssim& \left(\int_{0}^{t} Z(\tau)d\tau\right)^{\frac{1}{2}} +\sum_{i=3}^{4}\left(\int_{0}^{t} \lVert u_{i} \rVert_{L^{p_{i}}}^{\frac{2p_{i}}{p_{i} -4}} X(\tau) d\tau\right)^{\frac{p_{i} -4}{p_{i} -2}} \left(\int_{0}^{t} Z(\tau) d\tau\right)^{\frac{p_{i} + 2}{2(p_{i} -2)}}\\
&+ \left(\int_{0}^{t} \lVert b \rVert_{L^{p_{b}}}^{\frac{2p_{b}}{p_{b} -4}} X(\tau) d\tau\right)^{\frac{p_{b} -4}{p_{b} -2}} \left(\int_{0}^{t} Z(\tau) d\tau\right)^{\frac{p_{b} + 2}{2(p_{b} -2)}}\\
\leq& \frac{1}{2}\int_{0}^{t} Z(\tau) d\tau + c(1 + \sum_{i=3}^{4}\left(\int_{0}^{t} \lVert u_{i} \rVert_{L^{p_{i}}}^{\frac{4p_{i}}{p_{i} - 6}}X(\tau) d\tau\right) \left(\int_{0}^{t} X(\tau) d\tau\right)^{\frac{p_{i} - 2}{p_{i} - 6}}\\
& \hspace{25mm}  + \left(\int_{0}^{t} \lVert b \rVert_{L^{p_{b}}}^{\frac{4p_{b}}{p_{b} - 6}}X(\tau) d\tau\right) \left(\int_{0}^{t} X(\tau) d\tau\right)^{\frac{p_{b} - 2}{p_{b} - 6}})\\
\leq& \frac{1}{2} \int_{0}^{t} Z(\tau) d\tau + c\left(1+  \sum_{i=3}^{4}\int_{0}^{t} (\lVert u_{i}\rVert_{L^{p_{i}}}^{\frac{4p_{i}}{p_{i} -6}} + \lVert b\rVert_{L^{p_{b}}}^{\frac{4p_{b}}{p_{b} -6}}) X(\tau) d\tau \right)
\end{align*}
by H$\ddot{o}$lder's inequalities, Proposition 3.2, Young's inequalities and (3). After absorbing, Gronwall's inequality implies the desired result in case $6 < p_{i} < \infty, r_{i} < \infty$. 

We now consider the case $p_{i} = \infty$, assuming for the simplicity of presentation that $p_{3} = p_{4} = p_{b} = \infty$. Firstly, we could have computed in contrast to (42), (43), (46) and (49) respectively 
\begin{align}
&III_{1} + III_{3} + III_{5} + III_{7}\lesssim II_{1} + II_{2}\\
\lesssim& \sum_{i=3}^{4} \lVert u_{i} \rVert_{L^{\infty}} \lVert \nabla u\rVert_{L^{2}} \lVert \Delta u\rVert_{L^{2}} + \lVert b\rVert_{L^{\infty}}(\lVert \nabla u\rVert_{L^{2}} + \lVert \nabla b\rVert_{L^{2}}) (\lVert \Delta u\rVert_{L^{2}} + \lVert \Delta b\rVert_{L^{2}})\nonumber\\
\leq& \frac{1}{16} Z(t) + c\sum_{i=3}^{4}(\lVert u_{i} \rVert_{L^{\infty}}^{2} + \lVert b\rVert_{L^{\infty}}^{2})X(t),\nonumber
\end{align}
\begin{align}
III_{2} \leq& IV_{1} + IV_{2} \\
\lesssim& \lVert \nabla_{1,2} u\rVert_{L^{2}} \lVert \nabla u\rVert_{L^{4}}^{2} + \sum_{i=3}^{4}\lVert u_{i} \rVert_{L^{\infty}}\lVert \nabla u\rVert_{L^{2}} \lVert \nabla^{2} u\rVert_{L^{2}}\nonumber\\
\leq& \frac{1}{16} Z(t) + c\left(W^{\frac{1}{2}}(t)Y^{\frac{1}{2}}(t)Z^{\frac{1}{2}}(t) + \sum_{i=3}^{4}\lVert u_{i} \rVert_{L^{\infty}}^{2}X(t)\right),\nonumber
\end{align}
\begin{align}
III_{4} \lesssim& IV_{3} + IV_{4}\\
\lesssim& \lVert \nabla u\rVert_{L^{4}} \lVert \nabla_{1,2} b\rVert_{L^{2}} \lVert \nabla b\rVert_{L^{4}} + \sum_{i=3}^{4}\lVert u_{i} \rVert_{L^{\infty}}\lVert \nabla b\rVert_{L^{2}} \lVert \Delta b\rVert_{L^{2}}\nonumber\\
\lesssim& \lVert \nabla_{1,2} b\rVert_{L^{2}} \lVert \nabla\nabla_{1,2} u\rVert_{L^{2}}^{\frac{1}{2}} \lVert \Delta u\rVert_{L^{2}}^{\frac{1}{2}} \lVert \nabla\nabla_{1,2} b\rVert_{L^{2}}^{\frac{1}{2}} \lVert \Delta b\rVert_{L^{2}}^{\frac{1}{2}} + \sum_{i=3}^{4}\lVert u_{i} \rVert_{L^{\infty}}\lVert \nabla b\rVert_{L^{2}} \lVert \Delta b\rVert_{L^{2}}\nonumber\\
\leq& \frac{1}{16} Z(t) + c\left(W^{\frac{1}{2}}(t)Y^{\frac{1}{2}}(t) Z^{\frac{1}{2}}(t) + \sum_{i=3}^{4}\lVert u_{i} \rVert_{L^{\infty}}^{2} X(t)\right),\nonumber
\end{align}
\begin{align}
III_{6} + III_{8} 
\lesssim& \lVert b\rVert_{L^{\infty}} (\lVert \nabla b\rVert_{L^{2}} + \lVert \nabla u\rVert_{L^{2}})(\lVert \nabla^{2} u\rVert_{L^{2}} + \lVert \nabla^{2} b\rVert_{L^{2}})\\
\leq& \frac{1}{16} Z(t) + c\lVert b\rVert_{L^{\infty}}^{2} X(t)\nonumber
\end{align}
all by H$\ddot{o}$lder's and Young's inequalities and (18) only in (52) and (53). Thus applying (51)-(54) in (41), absorbing and integrating in time $[0, t]$, we obtain 
\begin{align*}
&X(t) + \frac{3}{2}\int_{0}^{t}Z(\tau) d\tau\\
\leq& X(0) + c\sum_{i=3}^{4}\int_{0}^{t} (\lVert u_{i} \rVert_{L^{\infty}}^{2} + \lVert b\rVert_{L^{\infty}}^{2})X(\tau) d\tau\\
& \hspace{8mm} + c\sup_{\tau \in [0,t]} W^{\frac{1}{2}}(\tau) \left(\int_{0}^{t} Y(\tau) d\tau\right)^{\frac{1}{2}} \left(\int_{0}^{t} Z(\tau) d\tau \right)^{\frac{1}{2}}\\
\leq& \frac{1}{2} \int_{0}^{t}Z(\tau) d\tau + c \sum_{i=3}^{4}\int_{0}^{t} (\lVert u_{i} \rVert_{L^{\infty}}^{2}+ \lVert b\rVert_{L^{\infty}}^{2})X(\tau) d\tau\\
+&c\left(W(0) + \sum_{i=3}^{4}\int_{0}^{t} (\lVert u_{i} \rVert_{L^{\infty}}^{2} + \lVert b\rVert_{L^{\infty}}^{2}) X(\tau) d\tau)\right)^{2}\\
\leq&  \frac{1}{2} \int_{0}^{t}Z(\tau) d\tau\\
&+c\left( \sum_{i=3}^{4}\int_{0}^{t} (\lVert u_{i} \rVert_{L^{\infty}}^{2} + \lVert b\rVert_{L^{\infty}}^{2})X(\tau) d\tau + 1+ \sum_{i=3}^{4}\int_{0}^{t} (\lVert u_{i} \rVert_{L^{\infty}}^{4} + \lVert b\rVert_{L^{\infty}}^{4}) X(\tau) d\tau \right)
\end{align*}
by H$\ddot{o}$lder's inequality, Proposition 3.2, Young's inequality, (19) and (3). This completes the proof in case $p_{i} = \infty$. 

We now prove the second statement of Theorem 1.3, namely the smallness result when $p_{i} = 6, r_{i} = \infty$. For simplicity of presentation, we assume $p_{i} = 6 \hspace{1mm} \forall \hspace{1mm} i = 3, 4, b$. We integrate in time on (50) to obtain  
\begin{align*}
&X(t) + \int_{0}^{t} Z(\tau) d\tau\\
\leq& X(0) + c(\sum_{i=3}^{4}\sup_{\tau \in [0,t]} (\lVert u_{i} \rVert_{L^{6}}^{6} + \lVert b\rVert_{L^{6}}^{6})(\tau) \int_{0}^{t} X(\tau)d\tau + \sup_{\tau \in [0,t]} W^{\frac{1}{2}}(t) \left(\int_{0}^{t} Y(\tau) d\tau\right)^{\frac{1}{2}}\\
& \hspace{10mm} \times\left(\int_{0}^{t}Z(\tau) d\tau\right)^{\frac{1}{2}})\\
\lesssim& 1 + \left(W(0) + \sum_{i=3}^{4}\int_{0}^{t} (\lVert u_{i} \rVert_{L^{6}}^{3} + \lVert b\rVert_{L^{6}}^{3}) X^{\frac{1}{2}}(\tau) Z^{\frac{1}{2}}(\tau) d\tau \right) \left(\int_{0}^{t} Z(\tau) d\tau \right)^{\frac{1}{2}}\\
\lesssim& 1 + \left(\int_{0}^{t} Z(\tau) d\tau \right)^{\frac{1}{2}} + \sum_{i=3}^{4}\sup_{\tau \in [0,t]} (\lVert u_{i} \rVert_{L^{6}}^{3} + \lVert b\rVert_{L^{6}}^{3})(\tau) \left(\int_{0}^{t} X(\tau) d\tau\right)^{\frac{1}{2}} \left(\int_{0}^{t} Z(\tau) d\tau\right)\\
\leq& \frac{1}{2} \int_{0}^{t}Z(\tau) d\tau + c
\end{align*}
for $\sum_{i=3}^{4}\sup_{\tau \in [0,t]} (\lVert u_{i} \rVert_{L^{6}}^{3} + \lVert b\rVert_{L^{6}}^{3})(\tau)$ sufficiently small where we used H$\ddot{o}$lder's inequality, Proposition 3.2, Young's inequality and (3). Absorbing, Gronwall's inequality completes the proof of Theorem 1.3. 
\end{proof}

\section{Proof of Theorem 1.4}
We assume for simplicity of presentation that $\forall \hspace{1mm} i = 3, 4, b, p_{i} \in [\frac{12}{5}, 4]$ or $p_{i} \in [4, \infty]$. A combination of mixed cases can be obtained following the proofs below. 
\begin{proposition}
Let $N =4$ and $(u,b)$ be the solution pair to the MHD system (2a)-(2c) that satisfies the hypothesis of Theorem 1.4. Then $\forall \hspace{1mm} t \in (0, T]$,
\begin{align*}
&\sup_{\tau \in [0, t]} W(\tau) + \int_{0}^{t}Y(\tau) d\tau\\
\leq&
\begin{cases}
W(0) + c\sum_{i=3}^{4} \int_{0}^{t} \lVert \nabla u_{i} \rVert_{L^{p_{i}}}^{\frac{4p_{i}}{3p_{i} -4}} X^{\frac{4(p_{i} -2)}{3p_{i} -4}}(\tau) Z^{\frac{4-p_{i}}{3p_{i} -4}}(\tau)\\
 \hspace{30mm} + \lVert \nabla b\rVert_{L^{p_{b}}}^{\frac{4p_{b}}{3p_{b} -4}} X^{\frac{4(p_{b} -2)}{3p_{b} - 4}} (\tau)Z^{\frac{4-p_{b}}{3p_{b}  -4}}(\tau) d\tau, & \text{ if } p_{i} \in [\frac{12}{5}, 4],\\
W(0) + c\sum_{i=3}^{4} \int_{0}^{t} (\lVert \nabla u_{i} \rVert_{L^{p_{i}}}^{\frac{p_{i}}{p_{i} -2}} + \lVert \nabla b\rVert_{L^{p_{b}}}^{\frac{p_{b}}{p_{b} -2}})X(\tau) d\tau, & \text{ if } p_{i} \in [4, \infty],
 \end{cases}
\end{align*}
with the usual convention at $p_{i} = \infty, i = 3, 4, b$; i.e. $\frac{p_{i}}{p_{i} - 2} = 1$. 
\end{proposition}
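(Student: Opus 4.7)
The plan is to parallel the proof of Proposition 3.2 but invoke the second decomposition (21) of Proposition 3.1, which trades the hypothesis on $u_{3}, u_{4}, b$ for one on their gradients. I take the $L^{2}$-inner products of (2a)-(2b) with $-\Delta_{1,2}u$ and $-\Delta_{1,2}b$, sum, and apply (21) to obtain
\[
\tfrac{1}{2}\partial_{t} W(t) + Y(t) \lesssim \sum_{i=3}^{4} \int \lvert \nabla u_{i}\rvert \lvert \nabla_{1,2} u\rvert \lvert \nabla u\rvert + \int \lvert \nabla b\rvert \lvert \nabla_{1,2} b\rvert \lvert \nabla u\rvert.
\]
The goal is then to control each triple integral by $\tfrac{1}{4}Y(t)$ plus a term of the form asserted in the proposition, and to integrate over $[0,t]$.

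\textbf{Case $p_{i} \in [4, \infty]$.} I apply H\"older's inequality with exponents $(p_{i}, \tfrac{2p_{i}}{p_{i}-2}, 2)$, placing $\nabla u$ in $L^{2}$, then interpolate $\lVert \nabla_{1,2} u\rVert_{L^{2p_{i}/(p_{i}-2)}} \lesssim \lVert \nabla_{1,2} u\rVert_{L^{2}}^{(p_{i}-4)/p_{i}} \lVert \nabla_{1,2} u\rVert_{L^{4}}^{4/p_{i}}$ and invoke Lemma 2.1 applied to $\nabla_{1,2}u$ so that $\lVert \nabla_{1,2} u\rVert_{L^{4}} \lesssim Y^{1/2}$. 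This produces a bound by $\lVert \nabla u_{i}\rVert_{L^{p_{i}}} W^{(p_{i}-4)/(2p_{i})} Y^{2/p_{i}} X^{1/2}$. Young's inequality with conjugate exponent $p_{i}/(p_{i}-2)$ then absorbs $Y^{2/p_{i}}$ into $\tfrac{1}{4}Y$, and the elementary observation $W \leq X$ collapses the residual $W^{(p_{i}-4)/(2(p_{i}-2))} X^{p_{i}/(2(p_{i}-2))}$ to $X$, yielding the target $c\lVert \nabla u_{i}\rVert_{L^{p_{i}}}^{p_{i}/(p_{i}-2)} X$. The endpoint $p_{i} = \infty$ is handled directly by $\lVert \nabla u_{i}\rVert_{L^{\infty}} W^{1/2} X^{1/2} \leq \lVert \nabla u_{i}\rVert_{L^{\infty}} X$.

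\textbf{Case $p_{i} \in [\tfrac{12}{5}, 4]$.} Here $Z$ must appear, so I rebalance the H\"older weights as $(p_{i}, 4, \tfrac{4p_{i}}{3p_{i}-4})$: $\nabla_{1,2}u$ in $L^{4}$ (again $\lesssim Y^{1/2}$ by Lemma 2.1) and $\nabla u$ in $L^{a}$ with $a := \tfrac{4p_{i}}{3p_{i}-4} \in [2,3]$. Interpolating $\lVert \nabla u\rVert_{L^{a}} \lesssim \lVert \nabla u\rVert_{L^{2}}^{\theta} \lVert \nabla u\rVert_{L^{4}}^{1-\theta}$ with $\theta = (4-a)/a = 2(p_{i}-2)/p_{i}$, and applying Lemma 2.1 to $\nabla u$ to get $\lVert \nabla u\rVert_{L^{4}} \lesssim Y^{1/4} Z^{1/4}$, I obtain $\lVert \nabla u\rVert_{L^{a}} \lesssim X^{(p_{i}-2)/p_{i}} Y^{(4-p_{i})/(4p_{i})} Z^{(4-p_{i})/(4p_{i})}$. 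Multiplying gives a bound by $\lVert \nabla u_{i}\rVert_{L^{p_{i}}} X^{(p_{i}-2)/p_{i}} Y^{(p_{i}+4)/(4p_{i})} Z^{(4-p_{i})/(4p_{i})}$. The $Y$-exponent $(p_{i}+4)/(4p_{i})$ is strictly less than $1$ as long as $p_{i} > 4/3$, which is secured by the hypothesis $p_{i} > 12/5$, so Young's with conjugate $\tfrac{4p_{i}}{3p_{i}-4}$ absorbs the $Y$ factor into $\tfrac{1}{4}Y$ and delivers exactly $c \lVert \nabla u_{i}\rVert_{L^{p_{i}}}^{4p_{i}/(3p_{i}-4)} X^{4(p_{i}-2)/(3p_{i}-4)} Z^{(4-p_{i})/(3p_{i}-4)}$.

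The magnetic cross-term $\int \lvert \nabla b\rvert \lvert \nabla_{1,2}b\rvert \lvert \nabla u\rvert$ is handled identically in both regimes, with $\nabla b$ taking the place of $\nabla u_{i}$ and $\nabla_{1,2}b$ that of $\nabla_{1,2}u$ (Lemma 2.1 also gives $\lVert \nabla_{1,2}b\rVert_{L^{4}} \lesssim Y^{1/2}$). The only truly delicate point is exponent bookkeeping: in each regime the power of $Y$ extracted by interpolation must stay strictly below $1$ so Young's inequality can absorb it into $\tfrac{1}{4}Y$, and it is exactly this requirement that produces both the cutoff $p_{i} = 12/5$ and the two-case structure of the stated conclusion. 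Integrating the resulting differential inequality in time over $[0,t]$ completes the proof.
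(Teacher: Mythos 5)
Your proof is correct and takes essentially the same route as the paper's: starting from (55) via decomposition (21), you use the identical H\"older splittings $(p_{i}, \tfrac{2p_{i}}{p_{i}-2}, 2)$ for $p_{i} \in [4,\infty]$ and $(p_{i}, 4, \tfrac{4p_{i}}{3p_{i}-4})$ for $p_{i} \in [\tfrac{12}{5},4]$, the same interpolations, the same bound $\lVert \nabla u\rVert_{L^{4}} \lesssim Y^{\frac{1}{4}}Z^{\frac{1}{4}}$ from Lemma 2.1, and the same Young absorptions, so every exponent lands exactly on the stated ones. The only cosmetic differences are that you invoke Troisi's inequality for $\lVert \nabla_{1,2}u\rVert_{L^{4}} \lesssim Y^{\frac{1}{2}}$ where the paper uses the Sobolev embedding $\dot{H}^{1}(\mathbb{R}^{4}) \hookrightarrow L^{4}(\mathbb{R}^{4})$ (both give the same bound), you make explicit the step $W \leq X$ that the paper leaves implicit, and your closing remark slightly misattributes the cutoff: the Young absorption only needs $p_{i} > \tfrac{4}{3}$, while the value $\tfrac{12}{5}$ comes from the scaling hypothesis of Theorem 1.4 (it is the $r_{i} = \infty$ endpoint), not from this proposition's proof.
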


\begin{proof}
We first assume $p_{i} \in \left[\frac{12}{5}, 4\right]$. We take $L^{2}$-inner products of (2a)-(2b) with $-\Delta_{1,2} u, -\Delta_{1,2} b$ respectively and estimate 
\begin{align}
\frac{1}{2}\partial_{t} W(t) + Y(t) 
\lesssim \sum_{i=3}^{4} \int \lvert \nabla u_{i} \rvert\lvert \nabla_{1,2}u \rvert \lvert \nabla u\rvert + \lvert \nabla b\rvert \lvert \nabla_{1,2} b\rvert \lvert \nabla u\rvert
\end{align}
by (21). Now we estimate 
\begin{align}
\sum_{i=3}^{4}\int \lvert \nabla u_{i} \rvert\lvert \nabla_{1,2} u\rvert \lvert \nabla u\rvert \lesssim& \sum_{i=3}^{4} \lVert \nabla u_{i} \rVert_{L^{p_{i}}} \lVert \nabla_{1,2} u\rVert_{L^{4}} \lVert \nabla u\rVert_{L^{\frac{4p_{i}}{3p_{i} -4}}} \\
\lesssim& \sum_{i=3}^{4} \lVert \nabla u_{i} \rVert_{L^{p_{i}}} \lVert \nabla\nabla_{1,2} u\rVert_{L^{2}} \lVert \nabla u\rVert_{L^{2}}^{2(\frac{p_{i} -2}{p_{i}})} \lVert \nabla u\rVert_{L^{4}}^{\frac{4-p_{i}}{p_{i}}}\nonumber\\
\lesssim& \sum_{i=3}^{4} \lVert \nabla u_{i} \rVert_{L^{p_{i}}} \lVert \nabla\nabla_{1,2} u\rVert_{L^{2}}^{\frac{4+p_{i}}{2p_{i}}} \lVert \nabla u\rVert_{L^{2}}^{2(\frac{p_{i} -2}{p_{i}})}  \lVert \Delta u\rVert_{L^{2}}^{\frac{4-p_{i}}{2p_{i}}}\nonumber\\
\leq& \frac{1}{4} Y(t) + c\sum_{i=3}^{4} \lVert \nabla u_{i} \rVert_{L^{p_{i}}}^{\frac{4p_{i}}{3p_{i} - 4}}X^{\frac{4(p_{i} - 2)}{3p_{i} - 4}}(t)Z^{\frac{4-p_{i}}{3p_{i} - 4}}(t)\nonumber
\end{align}
by H$\ddot{o}$lder's inequalities, Sobolev embedding of $\dot{H}^{1}(\mathbb{R}^{4}) \hookrightarrow L^{4}(\mathbb{R}^{4})$, interpolation inequality, (18) and Young's inequality. Similarly, we obtain 
\begin{align}
\int \lvert \nabla b \rvert \lvert \nabla_{1,2} b\rvert \lvert \nabla u\rvert
\lesssim& \lVert \nabla b\rVert_{L^{p_{b}}}\lVert \nabla\nabla_{1,2} b\rVert_{L^{2}} \lVert \nabla u\rVert_{L^{\frac{4p_{b}}{3p_{b} - 4}}}\\
\lesssim& \lVert \nabla b\rVert_{L^{p_{b}}} Y^{\frac{1}{2}}(t) \lVert \nabla u\rVert_{L^{2}}^{2(\frac{p_{b} - 2}{p_{b}})}\lVert \nabla u\rVert_{L^{4}}^{\frac{4-p_{b}}{p_{b}}}\nonumber\\
\lesssim& \lVert \nabla b\rVert_{L^{p_{b}}}Y^{\frac{1}{2}}(t) X^{\frac{p_{b} -2}{p_{b}}} (t) \lVert \nabla\nabla_{1,2} u\rVert_{L^{2}}^{\frac{4-p_{b}}{2p_{b}}}\lVert \Delta u\rVert_{L^{2}}^{\frac{4-p_{b}}{2p_{b}}}\nonumber\\
\leq& \frac{1}{4} Y(t) + c\lVert \nabla b\rVert_{L^{p_{b}}}^{\frac{4p_{b}}{3p_{b} - 4}}X^{\frac{4(p_{b}  -2)}{3p_{b} - 4}}(t) Z^{\frac{4-p_{b}}{3p_{b} - 4}}(t).\nonumber
\end{align}
With (56) and (57) applied to (55), absorbing and integrating in time lead to 
\begin{align}
&W(t) + \int_{0}^{t} Y(\tau) d\tau\\
\leq& W(0)\nonumber\\
&+c\sum_{i=3}^{4} \int_{0}^{t} \lVert \nabla u_{i} \rVert_{L^{p_{i}}}^{\frac{4p_{i}}{3p_{i} -4}}X^{\frac{4(p_{i} -2)}{3p_{i} -4}}(\tau) Z^{\frac{4-p_{i}}{3p_{i} -4}}(\tau) + \lVert \nabla b\rVert_{L^{p_{b}}}^{\frac{4p_{b}}{3p_{b} -4}} X^{\frac{4(p_{b} -2)}{3p_{b} - 4}}(\tau) Z^{\frac{4-p_{b}}{3p_{b}  -4}}(\tau) d\tau.\nonumber
\end{align}
We now work on the case $4 < p_{i} < \infty$: 
\begin{align}
\sum_{i=3}^{4}\int \lvert \nabla u_{i} \rvert \lvert \nabla_{1,2} u\rvert \lvert \nabla u\rvert \lesssim& \sum_{i=3}^{4} \lVert \nabla u_{i} \rVert_{L^{p_{i}}} \lVert \nabla_{1,2} u\rVert_{L^{\frac{2p_{i}}{p_{i} - 2}}}\lVert \nabla u\rVert_{L^{2}}\\
\lesssim& \sum_{i=3}^{4} \lVert \nabla u_{i} \rVert_{L^{p_{i}}} \lVert \nabla_{1,2} u\rVert_{L^{2}}^{\frac{p_{i} - 4}{p_{i}}}\lVert \nabla_{1,2} u\rVert_{L^{4}}^{\frac{4}{p_{i}}}\lVert \nabla u\rVert_{L^{2}}\nonumber\\
\lesssim& \sum_{i=3}^{4} \lVert \nabla u_{i} \rVert_{L^{p_{i}}} \lVert \nabla_{1,2} u\rVert_{L^{2}}^{\frac{p_{i} - 4}{p_{i}}}\lVert \nabla\nabla_{1,2} u\rVert_{L^{2}}^{\frac{4}{p_{i}}}\lVert \nabla u\rVert_{L^{2}}\nonumber\\
\leq& \frac{1}{4}Y(t) + c\sum_{i=3}^{4} \lVert \nabla u_{i}\rVert_{L^{p_{i}}}^{\frac{p_{i}}{p_{i} - 2}}X(t)\nonumber
\end{align}
by H$\ddot{o}$lder's and interpolation inequalities, Sobolev embedding of $\dot{H}^{1}(\mathbb{R}^{4})\hookrightarrow L^{4}(\mathbb{R}^{4})$ and Young's inequality. Similarly, we estimate 
\begin{align}
\int \lvert \nabla b\rvert \lvert \nabla_{1,2} b\rvert \lvert \nabla u\rvert
\lesssim& \lVert \nabla b\rVert_{L^{p_{b}}} \lVert \nabla_{1,2} b\rVert_{L^{2}}^{\frac{p_{b} -4}{p_{b}}} \lVert \nabla_{1,2} b\rVert_{L^{4}}^{\frac{4}{p_{b}}} \lVert \nabla u\rVert_{L^{2}}\\
\lesssim& \lVert \nabla b\rVert_{L^{p_{b}}} \lVert \nabla_{1,2} b\rVert_{L^{2}}^{\frac{p_{b} -4}{p_{b}}} \lVert\nabla \nabla_{1,2} b\rVert_{L^{2}}^{\frac{4}{p_{b}}} \lVert \nabla u\rVert_{L^{2}}\nonumber\\
\leq& \frac{1}{4} Y(t) + c\lVert \nabla b\rVert_{L^{p_{b}}}^{\frac{p_{b}}{p_{b} - 2}}X(t).\nonumber
\end{align}
We apply (59) and (60) in (55), absorb and integrate in time to obtain 
\begin{align}
W(t) + \int_{0}^{t} Y(\tau) d\tau
\leq W(0) + c\sum_{i=3}^{4} \int_{0}^{t} (\lVert \nabla u_{i} \rVert_{L^{p_{i}}}^{\frac{p_{i}}{p_{i} -2}} + \lVert \nabla b\rVert_{L^{p_{b}}}^{\frac{p_{b}}{p_{b} -2}}) X(\tau) d\tau.
\end{align}
The case $p_{i} = \infty$ requires only a standard modification as done in the proof of Theorem 1.3; that is, 
\begin{align*}
\sum_{i=3}^{4}\int \lvert \nabla u_{i} \rvert \lvert \nabla_{1,2} u\rvert \lvert \nabla u\rvert
\lesssim \sum_{i=3}^{4} \lVert \nabla u_{i} \rVert_{L^{\infty}} \lVert \nabla_{1,2} u\rVert_{L^{2}} \lVert \nabla u\rVert_{L^{2}} \lesssim \sum_{i=3}^{4} \lVert \nabla u_{i} \rVert_{L^{\infty}} X(t),
\end{align*}
\begin{align*}
\int \lvert \nabla b\rvert \lvert \nabla_{1,2} b\rvert \lvert \nabla u\rvert
\lesssim \lVert \nabla b\rVert_{L^{\infty}} \lVert \nabla_{1,2} b\rVert_{L^{2}} \lVert \nabla u\rVert_{L^{2}} \lesssim \lVert \nabla b\rVert_{L^{\infty}}X(t)
\end{align*}
so that summing and integrating in time leads to the desired result. This completes the proof of Proposition 4.1.
\end{proof}
\begin{proposition}
Let $N =4$ and $(u,b)$ be the solution pair to the MHD system (2a)-(2c) that satisfies the hypothesis of Theorem 1.4. Then 
\begin{align*}
\sup_{t \in [0,T]} X(t) + \int_{0}^{T} Z(\tau) d\tau \lesssim 1. 
\end{align*}
\end{proposition}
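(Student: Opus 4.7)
\smallskip

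\noindent\emph{Plan.} The proof will mirror the architecture of Proposition 3.3, replacing the pointwise factors $|u_{3}|, |u_{4}|, |b|$ by their gradient counterparts $|\nabla u_{3}|, |\nabla u_{4}|, |\nabla b|$ throughout. I will take $L^{2}$-inner products of (2a)-(2b) with $(-\Delta u, -\Delta b)$ to obtain an identity of the form
$$\tfrac{1}{2}\partial_{t}X(t)+Z(t)=\sum_{i=1}^{8}III_{i}$$
exactly as in (41), then decompose each integral along the familiar $\Delta_{1,2}$/$\Delta_{3,4}$ split. In the end the goal is an inequality
$$X(t)+\tfrac{1}{2}\int_{0}^{t}Z(\tau)\,d\tau \leq X(0)+c\int_{0}^{t}\bigl(\|\nabla u_{3}\|_{L^{p_{3}}}^{\alpha_{3}}+\|\nabla u_{4}\|_{L^{p_{4}}}^{\alpha_{4}}+\|\nabla b\|_{L^{p_{b}}}^{\alpha_{b}}\bigr)X(\tau)\,d\tau + (\text{lower-order remainder})$$
to which Gronwall's inequality applies, with the exponents $\alpha_{i}$ dictated by the Sobolev/Young bookkeeping and matched to the hypothesis (16) of Theorem 1.4.

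For the four $\Delta_{1,2}$-terms $III_{1},III_{3},III_{5},III_{7}$, inequality (21) of Proposition 3.1 immediately reduces the problem to controlling $\sum_{i=3}^{4}\int |\nabla u_{i}||\nabla_{1,2}u||\nabla u|+\int |\nabla b||\nabla_{1,2}b||\nabla u|$; these are precisely the integrands already estimated in (56)-(57) (for $p_{i}\in[\tfrac{12}{5},4]$) and (59)-(60) (for $p_{i}\in[4,\infty]$) during the proof of Proposition 4.1. The only modification will be to redistribute Young's inequality so as to absorb $\tfrac{1}{16}Z(t)$ into the left-hand side rather than $\tfrac{1}{4}Y(t)$, as was done in passing from (39)-(40) to (42) in Proposition 3.3. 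This produces residual factors of $Y^{1/2}$ that Proposition 4.1 handles.

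For the four $\Delta_{3,4}$-terms $III_{2},III_{4},III_{6},III_{8}$, Proposition 3.1 does not apply directly. Instead I will split the outer $i$-sum in $-\sum_{i,j=1}^{4}\sum_{k=3}^{4}\int\partial_{k}u_{i}\partial_{i}u_{j}\partial_{k}u_{j}$ (and its $b$-analogues) into $i\in\{1,2\}$ and $i\in\{3,4\}$. On the piece $i\in\{3,4\}$ the factor $\partial_{k}u_{i}$ is already of $\nabla u_{3,4}$-form and I bound pointwise by $|\nabla u_{3,4}||\nabla u|^{2}$, crucially \emph{without} the integration by parts used in (43) (which would have converted this to $|u_{3,4}||\nabla u||\nabla^{2}u|$, the wrong hypothesis for Theorem 1.4). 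On the piece $i\in\{1,2\}$ the factor $\partial_{i}u_{j}$ is of $\nabla_{1,2}u$-form, giving a pointwise bound by $|\nabla_{1,2}u||\nabla u|^{2}$. Thus
$$III_{2}\lesssim \int|\nabla_{1,2}u||\nabla u|^{2}+\sum_{i=3}^{4}\int|\nabla u_{i}||\nabla u|^{2},$$
with entirely analogous bounds for $III_{4}, III_{6}, III_{8}$ in which the appropriate $u$'s are replaced by $b$'s. The $|\nabla_{1,2}u||\nabla u|^{2}$-type integrals are dominated by $W^{1/2}Y^{1/2}Z^{1/2}$ as in (44), (47), and the $|\nabla u_{i}||\nabla u|^{2}$-type are treated by H\"older with exponents $(p_{i},\tfrac{2p_{i}}{p_{i}-1},\tfrac{2p_{i}}{p_{i}-1})$, interpolation, Sobolev $\dot{H}^{1}(\mathbb{R}^{4})\hookrightarrow L^{4}(\mathbb{R}^{4})$, and Young to yield $\tfrac{1}{16}Z(t)+c\|\nabla u_{i}\|_{L^{p_{i}}}^{\alpha(p_{i})}X(t)$. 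Combining all bounds, integrating in $\tau\in[0,t]$, substituting Proposition 4.1 for the $W^{1/2}Y^{1/2}Z^{1/2}$ chain exactly as in the closing computation of Proposition 3.3, and applying Gronwall's inequality gives the conclusion. The smallness case at $p_{i}=\tfrac{12}{5}, r_{i}=\infty$ is handled by replacing one H\"older-in-time step with a $\sup$-in-time bound and a smallness absorption, precisely as at the end of Proposition 3.3.

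\smallskip

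\noindent\emph{Main obstacle.} The delicate point will be the exponent bookkeeping in the $\Delta_{1,2}$-group when $p_{i}\in[\tfrac{12}{5},4]$: the natural bounds produced by Proposition 4.1 contain intertwined factors $Y^{(4+p_{i})/(2p_{i})}Z^{(4-p_{i})/(2p_{i})}$, and redistributing Young's inequality to absorb $Z$ in place of $Y$ forces the appearance of $W^{1/2}Y^{1/2}Z^{1/2}$-type remainders whose time-integral must be controlled by invoking Proposition 4.1 a second time through a H\"older-Young chain identical in structure to the long display following (50). Verifying that the scaling threshold $\frac{4}{p_{i}}+\frac{2}{r_{i}}\leq\frac{5}{4}+\frac{1}{p_{i}}$ in (16) is exactly sufficient to make this double-layered estimate close is the quantitatively critical step.
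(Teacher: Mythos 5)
Your proposal is correct and follows essentially the same route as the paper's own proof: the identity (41), the reduction of $III_{1}+III_{3}+III_{5}+III_{7}$ via inequality (21) with Young's inequality redistributed to absorb $\tfrac{1}{16}Z(t)$, the split of $III_{2}$ and $III_{4}$ into $i\in\{1,2\}$ and $i\in\{3,4\}$ pieces \emph{without} the integration by parts of (43) (exactly the paper's (63) and (65)), the $W^{\frac{1}{2}}Y^{\frac{1}{2}}Z^{\frac{1}{2}}$ remainders controlled by Proposition 4.1 through the H\"older--Young chain, and the Gronwall closing with the $p_{i}=\tfrac{12}{5}$, $r_{i}=\infty$ smallness case handled by the same sup-in-time absorption. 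The only cosmetic difference is that the paper bounds $III_{6}+III_{8}$ directly by $\int|\nabla b|^{2}|\nabla u|$ without any splitting, and its estimate (62) absorbs cleanly into $\tfrac{1}{16}Z(t)+cX(t)$ with no residual $Y^{\frac{1}{2}}$ factor, but neither point affects the validity of your argument.
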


\begin{proof}
Similarly to the proof of Theorem 1.3, we estimate from (41). For $p_{i} \in [\frac{12}{5}, 4]$, we continue our estimate from (55), (56) and (57) to obtain 
\begin{align}
&III_{1} +III_{3} + III_{5} + III_{7}\\
\lesssim& \sum_{i=3}^{4} \lVert \nabla u_{i} \rVert_{L^{p_{i}}} \lVert \nabla\nabla_{1,2} u\rVert_{L^{2}}^{\frac{4+p_{i}}{2p_{i}}}\lVert \nabla u\rVert_{L^{2}}^{2(\frac{p_{i} - 2}{p_{i}})}\lVert \Delta u\rVert_{L^{2}}^{\frac{4-p_{i}}{2p_{i}}}\nonumber\\
&+ \lVert \nabla b\rVert_{L^{p_{b}}} Y^{\frac{4+p_{b}}{4p_{b}}}(t) X^{\frac{p_{b} - 2}{p_{b}}}(t)\lVert \Delta u\rVert_{L^{2}}^{\frac{4-p_{b}}{2p_{b}}}\nonumber\\
\leq& \frac{1}{16} Z(t) + c\sum_{i=3}^{4} \left(\lVert \nabla u_{i} \rVert_{L^{p_{i}}}^{\frac{p_{i}}{p_{i} - 2}} + \lVert \nabla b\rVert_{L^{p_{b}}}^{\frac{p_{b}}{p_{b} -2}} \right)X(t)\nonumber
\end{align}
by Young's inequality. We now decompose integrating by parts 
\begin{align}
III_{2}
=& -\sum_{i,j=1}^{4}\sum_{k=3}^{4}\int \partial_{k}u_{i}\partial_{i}u_{j} \partial_{k}u_{j}\\
=& -\sum_{i=1}^{2}\sum_{j=1}^{4}\sum_{k=3}^{4}\int \partial_{k}u_{i}\partial_{i}u_{j}\partial_{k}u_{j} - \sum_{j=1}^{4}\sum_{i,k=3}^{4}\int \partial_{k}u_{i}\partial_{i}u_{j}\partial_{k}u_{j}\nonumber\\
\lesssim& \int \lvert \nabla u\rvert^{2} \lvert \nabla_{1,2} u\rvert + \sum_{i=3}^{4} \int \lvert \nabla u_{i} \rvert \lvert \nabla u\rvert^{2} \triangleq V_{1} + V_{2}\nonumber
\end{align}
where $V_{1}$ is estimated identically as $IV_{1}$ in (44) while we estimate  
\begin{align}
V_{2} \lesssim& \sum_{i=3}^{4} \lVert \nabla u_{i} \rVert_{L^{p_{i}}} \lVert \nabla u\rVert_{L^{\frac{2p_{i}}{p_{i} - 1}}}^{2} \\
\lesssim& \sum_{i=3}^{4} \lVert \nabla u_{i} \rVert_{L^{p_{i}}} \lVert \nabla u\rVert_{L^{2}}^{2(\frac{p_{i} - 2}{p_{i}})}\lVert \Delta u\rVert_{L^{2}}^{2(\frac{2}{p_{i}})}
\leq \frac{1}{16} Z(t) + c\sum_{i=3}^{4} \lVert \nabla u_{i} \rVert_{L^{p_{i}}}^{\frac{p_{i}}{p_{i} - 2}}X(t)\nonumber
\end{align}
by H$\ddot{o}$lder's, Gagliardo-Nirenberg and Young's inequalities. Next, we decompose
\begin{align}
III_{4} =& -\sum_{i,j=1}^{4}\sum_{k=3}^{4}\int \partial_{k}u_{i} \partial_{i}b_{j}\partial_{k}b_{j} \\
=& -\sum_{i=1}^{2}\sum_{j=1}^{4}\sum_{k=3}^{4}\int \partial_{k}u_{i}\partial_{i}b_{j}\partial_{k}b_{j} - \sum_{j=1}^{4}\sum_{i,k=3}^{4}\int \partial_{k}u_{i}\partial_{i}b_{j}\partial_{k}b_{j}\nonumber\\
\lesssim& \int \lvert \nabla u\rvert \lvert \nabla_{1,2} b\rvert \lvert \nabla b\rvert + \sum_{i=3}^{4} \int \lvert \nabla u_{i} \rvert \lvert \nabla b\rvert^{2} \triangleq V_{3} + V_{4}\nonumber
\end{align}
where we estimate $V_{3}$ as $IV_{3}$ in (47) while same estimate of $V_{2}$ in (64) lead to 
\begin{align}
V_{4} \lesssim& \sum_{i=3}^{4} \lVert \nabla u_{i} \rVert_{L^{p_{i}}} \lVert \nabla b\rVert_{L^{\frac{2p_{i}}{p_{i} - 1}}}^{2}\\
\lesssim& \sum_{i=3}^{4} \lVert \nabla u_{i} \rVert_{L^{p_{i}}} \lVert \nabla b\rVert_{L^{2}}^{2(\frac{p_{i} - 2}{p_{i}})}\lVert \Delta b\rVert_{L^{2}}^{2(\frac{2}{p_{i}})} \leq \frac{1}{16} Z(t) + c\sum_{i=3}^{4} \lVert \nabla u_{i} \rVert_{L^{p_{i}}}^{\frac{p_{i}}{p_{i} - 2}}X(t).\nonumber
\end{align}
Finally, 
\begin{align}
III_{6} + III_{8} =& \sum_{i,j=1}^{4}\sum_{k=3}^{4}\int \partial_{k}b_{i}\partial_{i}b_{j}\partial_{k}u_{j} + \partial_{k}b_{i}\partial_{i}u_{j}\partial_{k}b_{j}\\
\lesssim& \int \lvert \nabla b\rvert^{2} \lvert \nabla u\rvert\nonumber\\
\lesssim& \lVert \nabla b\rVert_{L^{p_{b}}}\lVert \nabla b\rVert_{L^{\frac{2p_{b}}{p_{b} - 1}}}\lVert \nabla u\rVert_{L^{\frac{2p_{b}}{p_{b} - 1}}}\nonumber\\
\lesssim& \lVert \nabla b\rVert_{L^{p_{b}}} \lVert \nabla b\rVert_{L^{2}}^{\frac{p_{b} - 2}{p_{b}}}\lVert \Delta b\rVert_{L^{2}}^{\frac{2}{p_{b}}} \lVert \nabla u\rVert_{L^{2}}^{\frac{p_{b} -2}{p_{b}}} \lVert \Delta u\rVert_{L^{2}}^{\frac{2}{p_{b}}}\nonumber\\
\leq& \frac{1}{16}Z(t) + c\lVert \nabla b\rVert_{L^{p_{b}}}^{\frac{p_{b}}{p_{b} - 2}}X(t) \nonumber
\end{align}
by H$\ddot{o}$lder's, Gagliardo-Nirenberg and Young's inequalities. Thus, we obtain by applying (62)-(67) in (41), absorbing and integrating in time, 
\begin{align}
&X(t) + \frac{3}{2}\int_{0}^{t} Z(\tau) d\tau\\
\lesssim& 1 + \sum_{i=3}^{4} \int_{0}^{t} (\lVert \nabla u_{i} \rVert_{L^{p_{i}}}^{\frac{p_{i}}{p_{i} - 2}} + \lVert \nabla b\rVert_{L^{p_{b}}}^{\frac{p_{b}}{p_{b} - 2}})X(\tau) d\tau\nonumber\\
&+ \sup_{\tau \in [0,t]} W^{\frac{1}{2}}(\tau) \left(\int_{0}^{t} Y(\tau) d\tau\right)^{\frac{1}{2}} \left(\int_{0}^{t} Z(\tau) d\tau\right)^{\frac{1}{2}}\nonumber
\end{align}
where we also used H$\ddot{o}$lder's inequality. Now we assume $p_{i} \in (\frac{12}{5}, 4]$. For the last term only, we bound it by a constant multiples of 
\begin{align*}
&\left(1 + \sum_{i=3}^{4} \int_{0}^{t} \lVert \nabla u_{i} \rVert_{L^{p_{i}}}^{\frac{4p_{i}}{3p_{i} -4}} X^{\frac{4(p_{i} -2)}{3p_{i} -4}}(\tau) Z^{\frac{4-p_{i}}{3p_{i} -4}}(\tau) + \lVert \nabla b\rVert_{L^{p_{b}}}^{\frac{4p_{b}}{3p_{b} -4}} X^{\frac{4(p_{b} -2)}{3p_{b} - 4}}(\tau) Z^{\frac{4-p_{b}}{3p_{b}  -4}}(\tau) d\tau\right)\\
&\times \left(\int_{0}^{t}Z(\tau) d\tau\right)^{\frac{1}{2}}\\
\lesssim& \left(\int_{0}^{t}Z(\tau) d\tau\right)^{\frac{1}{2}} + \sum_{i=3}^{4} \left(\int_{0}^{t} \lVert \nabla u_{i} \rVert_{L^{p_{i}}}^{\frac{p_{i}}{p_{i} - 2}} X(\tau) d\tau\right)^{\frac{4(p_{i} - 2)}{3p_{i} - 4}}\left(\int_{0}^{t} Z(\tau) d\tau \right)^{\frac{4+p_{i}}{2(3p_{i} - 4)}}\\
&+ \left(\int_{0}^{t} \lVert \nabla b\rVert_{L^{p_{b}}}^{\frac{p_{b}}{p_{b} - 2}}X(\tau) d\tau\right)^{\frac{4(p_{b} - 2)}{3p_{b} - 4}}\left(\int_{0}^{t} Z(\tau) d\tau\right)^{\frac{4+p_{b}}{2(3p_{b} - 4)}}\\
\leq& \frac{1}{2}\int_{0}^{t}Z(\tau) d\tau\\
&+ c\left(1+ \sum_{i=3}^{4} \left(\int_{0}^{t} \lVert \nabla u_{i} \rVert_{L^{p_{i}}}^{\frac{p_{i}}{p_{i} - 2}}X(\tau) d\tau\right)^{\frac{8(p_{i} - 2)}{5p_{i} - 12}} + \left(\int_{0}^{t} \lVert \nabla b\rVert_{L^{p_{b}}}^{\frac{p_{b}}{p_{b} - 2}}X(\tau) d\tau \right)^{\frac{8(p_{b} - 2)}{5p_{b} - 12}}\right)\\
\leq& \frac{1}{2}\int_{0}^{t}Z(\tau) d\tau + c\left(1+ \sum_{i=3}^{4} \left(\int_{0}^{t} \lVert \nabla u_{i}\rVert_{L^{p_{i}}}^{\frac{8p_{i}}{5p_{i} - 12}}X(\tau) d\tau\right) + \left(\int_{0}^{t} \lVert \nabla b\rVert_{L^{p_{b}}}^{\frac{8p_{b}}{5p_{b} - 12}}X(\tau) d\tau\right)\right)
\end{align*}
due to Proposition 4.1, H$\ddot{o}$lder's and Young's inequalities and (3). 

Next, we consider the case $4 < p_{i} < \infty$. We restart from (41) where we continue our estimates from (55), (59) and (60) to obtain 
\begin{align}
&III_{1} + III_{3} + III_{5} + III_{7} \\
\lesssim& \sum_{i=3}^{4} \lVert \nabla u_{i} \rVert_{L^{p_{i}}} \lVert \nabla_{1,2} u\rVert_{L^{2}}^{\frac{p_{i} - 4}{p_{i}}}\lVert \nabla\nabla_{1,2} u\rVert_{L^{2}}^{\frac{4}{p_{i}}}\lVert \nabla u\rVert_{L^{2}}\nonumber\\
&+ \lVert \nabla b\rVert_{L^{p_{b}}}\lVert \nabla_{1,2} b\rVert_{L^{2}}^{\frac{p_{b} -4}{p_{b}}} \lVert \nabla\nabla_{1,2} b\rVert_{L^{2}}^{\frac{4}{p_{b}}} \lVert \nabla u\rVert_{L^{2}}\nonumber\\
\leq& \frac{1}{16} Z(t) + c\sum_{i=3}^{4} (\lVert \nabla u_{i} \rVert_{L^{p_{i}}}^{\frac{p_{i}}{p_{i} -2}} + \lVert \nabla b\rVert_{L^{p_{b}}}^{\frac{p_{b}}{p_{b} -2}} )X(t) \nonumber
\end{align}
by Young's inequality. The rest of the estimates of $III_{2}, III_{4}, III_{6}, III_{8}$ all go through as in the case $p_{i} \in [\frac{12}{5}, 4]$. Indeed, continuing from (63), we bound $III_{2} \lesssim V_{1} + V_{2}$ where $V_{1}$ is estimated as $IV_{1}$ in (44) and $V_{2}$ is estimated identically as (64). The estimates of $III_{4}$ also goes through as in (65): $III_{4} \lesssim V_{3} + V_{4}$ where $V_{3}$ is estimated as $IV_{3}$ in (47) and $V_{4}$ in (66). Finally, we use the estimate of $III_{6} + III_{8}$ in (67). Thus, in sum, after absorbing, integrating in time, we obtain 
\begin{align*}
X(t) + \frac{3}{2}\int_{0}^{t}Z(\tau) d\tau 
\leq X(0) &+ c\sum_{i=3}^{4}\int_{0}^{t} (\lVert \nabla u_{i} \rVert_{L^{p_{i}}}^{\frac{p_{i}}{p_{i} -2}} + \lVert \nabla b\rVert_{L^{p_{b}}}^{\frac{p_{b}}{p_{b} - 2}})X(\tau) d\tau \\
&+ c\sup_{\tau \in [0,t]} W^{\frac{1}{2}}(\tau) \left(\int_{0}^{t} Y(\tau) d\tau\right)^{\frac{1}{2}} \left(\int_{0}^{t} Z(\tau) d\tau\right)^{\frac{1}{2}}
\end{align*}
by H$\ddot{o}$lder's inequality. We bound the last term by 
\begin{align*}
&c(W(0) + \sum_{i=3}^{4} \int_{0}^{t} (\lVert \nabla u_{i} \rVert_{L^{p_{i}}}^{\frac{p_{i}}{p_{i} - 2}} + \lVert \nabla b\rVert_{L^{p_{b}}}^{\frac{p_{b}}{p_{b} - 2}})X(\tau) d\tau \left(\int_{0}^{t} Z(\tau) d\tau\right)^{\frac{1}{2}}\\
\leq& \frac{1}{2}\int_{0}^{t} Z(\tau) d\tau + c\left(1+ \sum_{i=3}^{4}\left(\int_{0}^{t} (\lVert \nabla u_{i} \rVert_{L^{p_{i}}}^{\frac{p_{i}}{p_{i} - 2}} + \lVert \nabla b\rVert_{L^{p_{b}}}^{\frac{p_{b}}{p_{b} - 2}})X(\tau) d\tau\right)^{2} \right)\\
\leq& \frac{1}{2}\int_{0}^{t} Z(\tau) d\tau + c\left(1+ \sum_{i=3}^{4}\int_{0}^{t} (\lVert \nabla u_{i} \rVert_{L^{p_{i}}}^{\frac{2p_{i}}{p_{i} - 2}} + \lVert \nabla b\rVert_{L^{p_{b}}}^{\frac{2p_{b}}{p_{b} - 2}})X(\tau) d\tau\right)
\end{align*}
by Proposition 4.1, Young's and H$\ddot{o}$lder's inequalities and (3). After absorbing, Gronwall's inequality implies the desired result. We now consider the case $p_{i} = \infty$. For simplicity, we assume $p_{i} = \infty \forall \hspace{1mm} i = 3, 4, b$. We continue from (41) where we estimate in contrast to (69), 
\begin{align*}
&III_{1} + III_{3} + III_{5} + III_{7}\\
\lesssim& \sum_{i=3}^{4} \int \lvert \nabla u_{i} \rvert \lvert \nabla_{1,2} u\rvert \lvert \nabla u\rvert + \lvert \nabla b\rvert \lvert \nabla_{1,2} b\rvert \lvert \nabla u\rvert \lesssim \sum_{i=3}^{4} (\lVert \nabla u_{i} \rVert_{L^{\infty}} + \lVert \nabla b\rVert_{L^{\infty}})X(t)  
\end{align*}
due to (55), H$\ddot{o}$lder's and Young's inequalities. Moreover, from $III_{2} \lesssim V_{1} + V_{2}$ of (63), we estimate $V_{1}$ is estimated as $IV_{1}$ in (44) and 
\begin{equation*}
V_{2} \approx \sum_{i=3}^{4}\int \lvert \nabla u_{i} \rvert \lvert \nabla u\rvert^{2} \lesssim \sum_{i=3}^{4} \lVert \nabla u_{i} \rVert_{L^{\infty}} \lVert \nabla u\rVert_{L^{2}}^{2}.
\end{equation*}
Moreover, from $III_{4} \lesssim V_{3} + V_{4}$ of (65), we have $V_{3}$ estimated as $IV_{3}$ in (47) while
\begin{equation*}
V_{4} \approx \sum_{i=3}^{4}\int \lvert \nabla u_{i} \rvert \lvert \nabla b\rvert^{2} \lesssim \sum_{i=3}^{4} \lVert \nabla u_{i} \rVert_{L^{\infty}} \lVert \nabla b\rVert_{L^{2}}^{2}.
\end{equation*}
Finally, continuing our estimate from (67), 
\begin{align*}
III_{6} + III_{8} \lesssim \int \lvert \nabla b\rvert^{2} \lvert \nabla u\rvert
\lesssim \lVert \nabla b\rVert_{L^{\infty}}\lVert \nabla b\rVert_{L^{2}} \lVert \nabla u\rVert_{L^{2}}
\lesssim \lVert \nabla b\rVert_{L^{\infty}} X(t).
\end{align*}
In sum, integrating in time we obtain 
\begin{align*}
&X(t) + 2\int_{0}^{t} Z(\tau) d\tau\\
\lesssim& X(0) + \sum_{i=3}^{4}\int (\lVert \nabla u_{i} \rVert_{L^{\infty}} + \lVert \nabla b\rVert_{L^{\infty}})X(\tau) d\tau\\
&+ \sup_{\tau \in [0,t]} W^{\frac{1}{2}}(\tau) \left(\int_{0}^{t} Y(\tau) d\tau\right)^{\frac{1}{2}} \left(\int_{0}^{t} Z(\tau) d\tau\right)^{\frac{1}{2}}\\
\lesssim& X(0)+  \sum_{i=3}^{4}\int (\lVert \nabla u_{i} \rVert_{L^{\infty}} + \lVert \nabla b\rVert_{L^{\infty}})X(\tau) d\tau\\
&+\left(W(0) + \sum_{i=3}^{4} \int_{0}^{t} (\lVert \nabla u_{i} \rVert_{L^{\infty}} + \lVert \nabla b\rVert_{L^{\infty}}) X(\tau) d\tau\right)\left(\int_{0}^{t} Z(\tau) d\tau \right)^{\frac{1}{2}}\\
\leq& \int_{0}^{t} Z(\tau) d\tau + c \left(1+ \sum_{i=3}^{4} (\int_{0}^{t} \lVert \nabla u_{i} \rVert_{L^{\infty}}^{2} + \lVert \nabla b\rVert_{L^{\infty}}^{2})X(\tau) d\tau)\right)
\end{align*}
by H$\ddot{o}$lder's inequality, Proposition 4.1, Young's inequality and (3). 

Finally, we prove the smallness result in the case $p_{i} = \frac{12}{5}, r_{i} = \infty$, for which for simplicity of presentation, we assume $r_{i} = \infty, p_{i} = \frac{12}{5}, \forall \hspace{1mm} i = 3, 4, b$. From (68),
\begin{align*}
&X(t) + \frac{3}{2}\int_{0}^{t}Z(\tau) d\tau\\
\lesssim& X(0) + \sum_{i=3}^{4}\int_{0}^{t} (\lVert \nabla u_{i} \rVert_{L^{\frac{12}{5}}}^{6} + \lVert \nabla b\rVert_{L^{\frac{12}{5}}}^{6})X(\tau) d\tau\\
&+ \left(W(0) + \sum_{i=3}^{4}\int_{0}^{t} (\lVert \nabla u_{i} \rVert_{L^{\frac{12}{5}}}^{3} + \lVert \nabla b\rVert_{L^{\frac{12}{5}}}^{3}) X^{\frac{1}{2}}(\tau) Z^{\frac{1}{2}}(\tau) d\tau \right) \left(\int_{0}^{t} Z(\tau) d\tau\right)^{\frac{1}{2}}\\
\leq& \frac{1}{4}\int_{0}^{t}Z(\tau) d\tau + c\sum_{i=3}^{4}\int_{0}^{t} (\lVert \nabla u_{i} \rVert_{L^{\frac{12}{5}}}^{6} + \lVert \nabla b\rVert_{L^{\frac{12}{5}}}^{6})X(\tau) d\tau\\
&+ c\left(1+ \sum_{i=3}^{4} (\int_{0}^{t} (\lVert \nabla u_{i} \rVert_{L^{\frac{12}{5}}}^{3} + \lVert \nabla b\rVert_{L^{\frac{12}{5}}}^{3}) X^{\frac{1}{2}}(\tau) Z^{\frac{1}{2}}(\tau) d\tau )^{2}\right)\\
\leq& \frac{1}{4} \int_{0}^{t}Z(\tau) d\tau + c\sum_{i=3}^{4}\sup_{\tau \in [0,t]} (\lVert \nabla u_{i}\rVert_{L^{\frac{12}{5}}}^{6} + \lVert \nabla b\rVert_{L^{\frac{12}{5}}}^{6})(\tau) \int_{0}^{t}X(\tau) d\tau\\
&+ c\left(1+ \sum_{i=3}^{4}\sup_{\tau \in [0,t]} (\lVert \nabla u_{i} \rVert_{L^{\frac{12}{5}}}^{6} + \lVert \nabla b\rVert_{L^{\frac{12}{5}}}^{6})(\tau) \int_{0}^{t}X(\tau) d\tau \int_{0}^{t} Z(\tau) d\tau \right)\\
\leq& \frac{1}{2} \int_{0}^{t} Z(\tau) d\tau + c
\end{align*}
for $\sum_{i=3}^{4}\sup_{t \in [0,T]} (\lVert \nabla u_{i} \rVert_{L^{\frac{12}{5}}}^{6} + \lVert \nabla b\rVert_{L^{\frac{12}{5}}}^{6})(t)$ sufficiently small where we used H$\ddot{o}$lder's inequality, Proposition 4.1, Young's inequality, (19) and (3). This completes the proof of Theorem 1.4. 
\end{proof}

\section{Proof of Theorem 1.5}
We fix $q_{i} \in (\frac{12}{7}, 6)$ and then $p_{i} = 6 + \epsilon$ for $\epsilon > 0$ sufficiently small so that  $\frac{2(6+\epsilon)}{(6+\epsilon) + 1} < q_{i}$ and also $q_{i} < 6 < p_{i}$. This implies that $\forall \hspace{1mm} \epsilon > 0$ sufficiently small, we have $q_{i} \in (\frac{2p_{i}}{p_{i} + 1}, p_{i})$. Now we multiply the $i$-th component of (1a) with $\lvert u_{i} \rvert^{p_{i} -2} u_{i}$, integrate in space to obtain 
\begin{align*}
\frac{1}{p_{i}} \partial_{t} \lVert u_{i} \rVert_{L^{p_{i}}}^{p_{i}} + c(p_{i}) \lVert u_{i} \rVert_{L^{2p_{i}}}^{p_{i}}
\lesssim& \lVert \partial_{i} \pi \rVert_{L^{q_{i}}} \lVert u_{i} \rVert_{L^{\frac{(p_{i} - 1) q_{i}}{q_{i} - 1}}}^{p_{i} - 1}\\
\lesssim&  \lVert \partial_{i} \pi \rVert_{L^{q_{i}}} \lVert u_{i} \rVert_{L^{p_{i}}}^{\frac{p_{i}q_{i} - 2p_{i} + q_{i}}{q_{i}}}\lVert u_{i} \rVert_{L^{2p_{i}}}^{\frac{2(p_{i} - q_{i})}{q_{i}}}\nonumber\\
\leq& \frac{c(p_{i})}{2} \lVert u_{i} \rVert_{L^{2p_{i}}}^{p_{i}} + c\lVert \partial_{i} \pi \rVert_{L^{q_{i}}}^{\frac{p_{i}q_{i}}{p_{i}q_{i} - 2p_{i} + 2q_{i}}}\lVert u_{i} \rVert_{L^{p_{i}}}^{p_{i}(\frac{p_{i}q_{i} - 2p_{i} + q_{i}}{p_{i}q_{i} - 2p_{i} + 2q_{i}})}\nonumber
\end{align*}
where we used the lower bound estimate on the dissipative term of 
\begin{align*}
c(p_{i})\lVert u_{i} \rVert_{L^{2p_{i}}}^{p_{i}} \approx& \lVert \lvert u_{i} \rvert^{\frac{p_{i}}{2}}\rVert_{L^{4}}^{2} \lesssim \lVert \lvert u_{i} \rvert^{\frac{p_{i}}{2}} \rVert_{\dot{H}^{1}}^{2} \approx \frac{(p_{i} - 1)4}{p_{i}^{2}}\int \lvert \nabla \lvert u_{i} \rvert^{\frac{p_{i}}{2}} \rvert^{2} = -\int \Delta u \lvert u_{i} \rvert^{p_{i} -2} u_{i}
\end{align*}
for some constant $c(p_{i})$ that depends on $p_{i}$, H$\ddot{o}$lder's, interpolation and Young's inequalities. We absorb and obtain 
\begin{align*}
\frac{1}{p_{i}} \partial_{t} \lVert u_{i} \rVert_{L^{p_{i}}}^{p_{i}} + \frac{c(p_{i})}{2} \lVert u_{i} \rVert_{L^{2p_{i}}}^{p_{i}} \lesssim \lVert \partial_{i} \pi \rVert_{L^{q_{i}}}^{\frac{p_{i}q_{i}}{p_{i}q_{i} - 2p_{i} + 2q_{i}}}(1 + \lVert u_{i} \rVert_{L^{p_{i}}}^{p_{i}})
\end{align*}
by Young's inequality. By hypothesis of Theorem 1.5 and Gronwall's inequality, $\forall \hspace{1mm} \epsilon > 0$ sufficiently small we have $\sum_{i=3}^{4} \sup_{t \in [0,T]} \lVert u_{i} \rVert_{L^{p_{i}}}(t) \lesssim 1$ where $p_{i} = 6 + \epsilon$. By Theorem 1.1, the proof of Theorem 1.5 is complete. 

\section{Further Discussion}
There are many results that exist for the regularity criteria component reduction theory of the three-dimensional NSE and the MHD system that we may look forward to being generalized to the four-dimensional case. We remark however that some of such results did not seem readily generalizable. We also note that in order to reduce our two-component regularity criterion for the four-dimensional NSE to one component or to extend it to higher dimension such as five, it seems to require a new approach. 

\section{Acknowledgment}
The author expresses gratitude to Professor Jiahong Wu and Professor David Ullrich for their teaching and Professor Vladimir Sverak for helpful comments on the presentation of the manuscript.

\end{document}